\newtheorem{assumption}{\sc Assumption}
\newtheorem{theorem}{Theorem}
\newtheorem{proposition}{Proposition}
\newtheorem{corollary}{Corollary}
\newtheorem{lemma}{Lemma}
\newtheorem{definition}{Definition}
\newcommand{\cA}{\mathcal{A}}
\newcommand{\cC}{\mathcal{C}}
\newcommand{\cD}{\mathcal{D}}
\newcommand{\cE}{\mathcal{E}}
\newcommand{\cF}{\mathcal{F}}
\newcommand{\cM}{\mathcal{M}}
\newcommand{\cP}{\mathcal{P}}
\newcommand{\cQ}{\mathcal{Q}}
\newcommand{\cS}{\mathcal{S}}
\newcommand{\cX}{\mathcal{X}}
\newcommand{\cY}{\mathcal{Y}}
\newcommand{\RR}{\mathbb{R}}
\newcommand{\1}{\mathds{1}}
\newcommand*{\E}{\mathbb E}
\newcommand*{\p}{\mathbb P}
\newcommand*{\ep}{\varepsilon}
\DeclareMathOperator*{\argmin}{argmin}
\newcommand{\defeq}{:=}
\newcommand{\orlicz}[1]{\|#1\|_{\psi_1}}
\begin{document}

\begin{frontmatter}
\title{Uncoupled isotonic regression via minimum Wasserstein deconvolution}

\runtitle{Uncoupled regression} %

\author{\fnms{Philippe}~\snm{Rigollet}\thanksref{t5}\ead[label = rigollet]{rigollet@math.mit.edu} \and
	\fnms{Jonathan}~\snm{Weed}\thanksref{t6}\ead[label = weed]{jweed@math.mit.edu}
}

\affiliation{Massachusetts Institute of Technology}

\thankstext{t5}{This work was supported in part by grants NSF DMS-1712596, NSF DMS-TRIPODS-1740751, ONR N00014-17-1-2147, a grant from the MIT NEC Corporation, grant 2018-182642 from the Chan Zuckerberg Initiative DAF and the MIT Skoltech Seed Fund.}
\thankstext{t6}{This work was supported in part by NSF Graduate Research Fellowship DGE-1122374.}

\runauthor{Rigollet and Weed}
\maketitle

\begin{abstract}
{Isotonic regression is a standard problem in shape-constrained estimation where the goal is to estimate an unknown nondecreasing regression function $f$ from independent pairs $(x_i, y_i)$ where $\E[y_i]=f(x_i), i=1, \ldots n$. While this problem is well understood both statistically and computationally, much less is known about its uncoupled counterpart where one is given only the unordered sets $\{x_1, \ldots, x_n\}$ and $\{y_1, \ldots, y_n\}$. In this work, we leverage tools from optimal transport theory to derive minimax rates under weak moments conditions on $y_i$ and to give an efficient algorithm achieving optimal rates. Both upper and lower bounds employ moment-matching arguments that are also pertinent to learning mixtures of distributions and deconvolution.}
\end{abstract}
\begin{keyword}[class=AMS]
\kwd{62G08}
\end{keyword}
\begin{keyword}[class=KWD]
Isotonic regression, Coupling, Moment matching, Deconvolution, Minimum Kantorovich distance estimation
\end{keyword}

\end{frontmatter}
\section{Introduction}

Optimal transport distances have proven valuable for varied tasks in machine learning, computer vision, computer graphics, computational biology, and other disciplines; these recent developments have been supported by breakneck advances in computational optimal transport in the last few years~\cite{Cut13, AltWeeRig17,PeyCut18,AltBacRud18}. This increasing popularity in applied fields has led to a corresponding increase in attention to optimal transport as a tool for theoretical statistics~\cite{ForHutNit19,RigWee18,ZemPan18}. In this paper, we show how to leverage techniques from optimal transport to solve the problem of \emph{uncoupled isotonic regression}, defined as follows.

Let $f$ be an unknown nondecreasing regression function from $[0, 1]$ to $\RR$, and for $i = 1, \dots, n$, let
\begin{equation*}
y_i = f(x_i) + \xi_i\,,
\end{equation*}
where $\xi_i \sim \cD$ are i.i.d.\ from some known distribution $\cD$ and $x_i$ are fixed (deterministic) design points. We note that the location of the design points is immaterial as long as $x_1 < \dots < x_n$. Given $p\ge 1$, the goal of isotonic regression is to produce an estimator $\hat f_n$ that is close to $f$ in the sense that $\E\|\hat f_n -f\|_p^p$ is small, where for any  $g$  from $[0, 1]$ to $\RR$ we define
\begin{equation}
\label{EQ:defellp}
\|g\|^p_p := \frac 1n \sum_{i=1}^n |g(x_i)|^p\,.
\end{equation}

The key novelty in \emph{uncoupled} isotonic regression is that the data at hand to construct $\hat f_n$ is given by the unordered sets $\{y_1, \dots, y_n\}$ and $\{x_1, \dots, x_n\}$. Informally, one does not know ``which $x$ corresponds to which $y$." In contrast, for  \emph{standard} isotonic regression, estimation is performed on the basis of the \emph{coupled} data $\{(x_1, y_1), \dots, (x_n, y_n)\}$. To our best knowledge, uncoupled isotonic regression was introduced in~\cite{CarSch16} as a natural model for situations arising in the social sciences where uncoupled data is a common occurrence. For instance, the authors of~\cite{CarSch16} give the example of analyzing data collected by two different organizations, such as wage data collected by a governmental agency and housing price data collected by a bank. The relationship between wages and housing prices can naturally be assumed to be monotonic. Though these data sets involve the same individuals, the data is uncoupled, and no paired information exists. Our results indicate that despite the lack of paired data, a relationship between the data sets \emph{can} be learned. In addition to raising obvious privacy issues, this result also has drastic implications for sample sizes, since it suggests that it is possible to integrate extremely large datasets such a census data or public real estate data even in the absence of coupled data.

While standard isotonic regression is a well understood and classical problem in shape-constrained estimation~\cite{Gee90,Mam91,Gee93,RobWriDyk88,MeyWoo00,Zha02,BarBarBre72,NemPolTsy85,BelTsy15,FlaMaoRig16,Bel18}, it is not even clear \emph{a priori} that consistent estimators for its uncoupled version exist. In absence of the noise random variables, $\xi_i, i=1, \ldots, n$, the regression function is easy to estimate using monotonicity: After ordering the sets $\{y_1, \dots, y_n\}$ and $\{x_1, \dots, x_n\}$ as $y_{(1)}\le \ldots \le y_{(n)}$ and $x_{(1)}\le \ldots \le x_{(n)})$, it is clear that $y_{(i)}=f(x_{(i)})$, $i=1, \ldots, n$. In the presence of noise, however, this na\"ive scheme fails, and the problem appears to be much more difficult---see Figure~\ref{FIG:illust}.

\begin{figure}[h]
\begin{center}
\includegraphics[width=.6\textwidth]{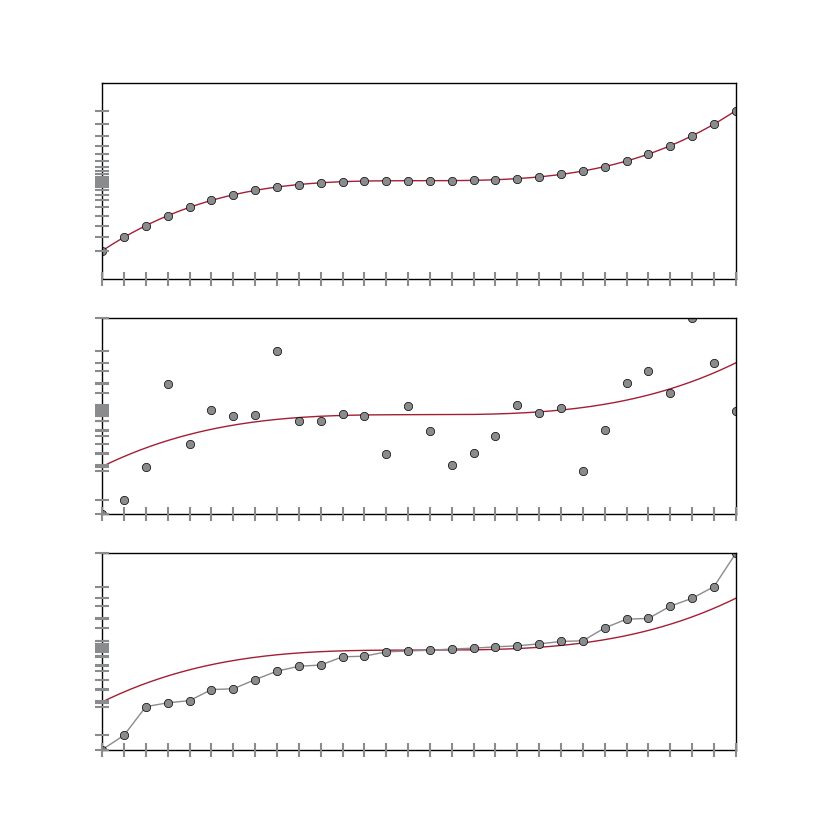}
\caption{In the noiseless case (top figure), either coupled data (gray dots) or uncoupled data (gray tick marks on axes) suffice to recover the regression function (magenta curve). When noise is added (middle figure), uncoupling changes the problem considerably. Estimating the regression function by ordering the sets $\{x_1, \dots, x_n\}$ and $\{y_1, \dots, y_n\}$ does not yield a consistent estimator (bottom figure).
\label{FIG:illust}}
\end{center}
\end{figure}

In this paper, we show that, quite surprisingly, a consistent estimator for $f$ exists under general moment conditions on the noise distribution $\cD$. We define an estimator by leveraging connections with optimal transport and show that it is minimax optimal simultaneously for all choices of $p$ in the performance measure~\eqref{EQ:defellp}. As noted in~\cite{CarSch16}, uncoupled isotonic regression is closely connected to deconvolution, which is a much harder problem than regression from a statistical perspective. Consequently, as our results show, minimax rates for this problem are \emph{exponentially} worse than for the standard isotonic regression problem.  A practical implication is that while uncoupled datasets may be integrated, their size should be exponentially larger in order to lead at least as good statistical accuracy.

\medskip

\noindent{\bf Notation.}
Given quantities $a$ and $b$, we write $a \lesssim b$ to indicate that $a \leq C b$ for some universal constant~$C > 0$ and define $a\vee b \defeq \max(a,b)$.
The notation $a \asymp b$ is used to indicate that $a \lesssim b$ and $b \lesssim a$.
Throughout, $\log$ refers to the natural logarithm, and $\log_+ x := (\log x) \vee 0$.
The terminology ``$\ell_p$ norm" refers always to the empirical $\ell_p$ norm defined in~\eqref{EQ:defellp}.  $\cF$ denotes the class of nondecreasing functions from $[0, 1]$ to $\RR$ and for any $V>0$,  $\cF_V \subset \cF$ denotes the subset of functions $f \in \cF$ such that $|f(x)| \leq V$ for $x \in [0, 1]$.

\subsection{Prior work}
Isotonic regression is a fundamental problem in nonparametric statistics. As such, the literature on this topic is vast and very well established.
A representative result is the following.

\begin{theorem}{\cite{NemPolTsy85}}
If $\cF_V$ is the class of nondecreasing functions from $[0, 1]$ to $\RR$ satisfying $|f(x)| \leq V$ for $x \in [0, 1]$, then
\begin{equation*}
\inf_{g_n} \sup_{f \in \cF_V} (\E \|f - g_n\|_2^2)^{1/2} \asymp \frac{\sigma^{2/3} V^{1/3}}{n^{1/3}}\,,
\end{equation*}
where the infimum is taken over all measurable functions of the data. Moreover the minimax rate is achieved by the least squares estimator over $\cF$ for which efficient algorithms such that the pool-adjacent-violators algorithm are well developed~\cite{RobWriDyk88}.
\end{theorem}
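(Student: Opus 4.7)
The plan is to establish matching upper and lower bounds separately. For the upper bound, I would analyze the least squares estimator $\hat f_n = \argmin_{g \in \cF_V} \sum_{i=1}^n (y_i - g(x_i))^2$. Starting from the basic inequality $\|\hat f_n - f\|_2^2 \le \frac{2}{n}\sum_{i=1}^n \xi_i (\hat f_n(x_i) - f(x_i))$, which follows from optimality of $\hat f_n$ and feasibility of $f$, I would bound the right-hand side via a peeling argument over shells $\{g \in \cF_V : \|g - f\|_2 \asymp r\}$. The essential input is the metric entropy estimate $\log N(\epsilon, \cF_V, \|\cdot\|_2) \lesssim V/\epsilon$ for nondecreasing functions bounded by $V$, which goes back to Birman--Solomjak. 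Dudley's inequality then yields $\EE \sup_{g \in \cF_V, \|g-f\|_2 \le r} \frac{1}{n}\sum_i \xi_i(g(x_i)-f(x_i)) \lesssim \frac{\sigma}{\sqrt n} \int_0^r \sqrt{V/\epsilon}\,d\epsilon \asymp \frac{\sigma}{\sqrt n}\sqrt{Vr}$, and solving the fixed-point equation $r_n^2 \asymp \frac{\sigma}{\sqrt n}\sqrt{V r_n}$ gives the critical radius $r_n \asymp (\sigma^2 V/n)^{1/3}$, which matches the claimed upper bound.

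For the lower bound, I would apply Assouad's lemma to a suitable hypothesis class. Partition $[0,1]$ into $m$ intervals of equal length, and for each sign vector $\omega \in \{-1,+1\}^m$ define a piecewise constant function $f_\omega$ whose value on the $k$-th interval is $kV/m + \delta \omega_k$ with perturbation $\delta \lesssim V/m$, so that $f_\omega \in \cF_V$. Pairs of hypotheses differing in a single coordinate are separated in squared empirical $\ell_2$ norm by $\asymp \delta^2/m$, while with roughly $n/m$ design points per interval the Kullback--Leibler divergence between the corresponding data distributions is $\asymp n\delta^2/(m\sigma^2)$. Choosing $\delta \asymp \sigma\sqrt{m/n}$ keeps this divergence bounded, so that Assouad's lemma produces a lower bound on $\EE\|\hat f - f\|_2^2$ of order $m \cdot \delta^2/m = \delta^2$. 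Optimizing $m \asymp (nV^2/\sigma^2)^{1/3}$ subject to $\delta \lesssim V/m$ yields the matching lower bound $\delta \asymp \sigma^{2/3}V^{1/3}n^{-1/3}$.

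The main obstacle for the upper bound is obtaining a localized rate; a direct chaining bound over all of $\cF_V$ is loose by a power of $n$, and the peeling step is essential to extract the $n^{-1/3}$ scaling from the $1/\sqrt{\epsilon}$ singularity of the entropy integral. On the lower bound side, the subtle point is that the perturbations $\delta \omega_k$ must respect monotonicity of $f_\omega$ across interval boundaries, forcing $\delta \lesssim V/m$; fortunately the optimal choice of $m$ is compatible with this constraint. The algorithmic claim that the pool-adjacent-violators algorithm computes $\hat f_n$ is a separate, deterministic fact about the $\ell_2$-projection onto the closed convex cone of nondecreasing sequences in $\RR^n$ and is not part of the statistical minimax analysis.
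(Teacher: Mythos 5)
This theorem is quoted from \cite{NemPolTsy85} as background and is not proved anywhere in the paper, so there is no ``paper's proof'' to compare against. Your sketch is nonetheless a correct and entirely standard modern route to the $n^{-1/3}$ minimax rate for bounded isotonic regression: the upper bound via the basic inequality for the least-squares estimator, the Birman--Solomjak metric-entropy bound $\log N(\epsilon,\cF_V,\|\cdot\|_2)\lesssim V/\epsilon$, localized chaining/peeling, and the resulting fixed-point equation $r_n^2 \asymp \sigma\sqrt{Vr_n/n}$; and the lower bound via Assouad's lemma with a monotone piecewise-constant ramp plus $\pm\delta$ perturbations. Your accounting is right on the two places that usually trip people up: the localization is indeed needed (global chaining over $\cF_V$ gives a suboptimal rate), and the monotonicity constraint $\delta \lesssim V/m$ turns out to be tight exactly at the optimizing $m \asymp (nV^2/\sigma^2)^{1/3}$, which is what makes the two-sided bound match. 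One small point worth being careful about if you were to write this out in full: the paper's statement refers to the least-squares estimator over $\cF$ (unconstrained monotone functions), not over $\cF_V$; your upper-bound argument implicitly constrains to $\cF_V$. The unconstrained isotonic LSE at the design points is sandwiched between order statistics of the data, so it lies in a slightly enlarged ball with high probability and the rate is unaffected, but this requires an extra truncation step that you should acknowledge. The remark that pool-adjacent-violators computes the $\ell_2$-projection onto the monotone cone is, as you say, a separate deterministic fact.
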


While there are many refinements of this result, the $n^{-1/3}$ rate is a common feature of isotonic regression problems in a variety of contexts.
By contrast, our results indicate that the minimax rate for the uncoupled problem is of order $\frac{\log \log n}{\log n}$.
In other words, the number of samples required to obtain a certain level of accuracy in the uncoupled setting is exponentially larger than the number required for isotonic regression.
This gap illustrates the profound difference between the coupled and uncoupled models.

In~\cite{CarSch16}, the authors propose an estimator of $f$ for uncoupled regression under smoothness assumptions. Crucially, this work draws an important connection between uncoupled isotonic regression and deconvolution and their estimator actually uses deconvolution as a black box. Under smoothness assumptions, rates of convergence may be obtained by combining the results of~\cite{CarSch16} and rates of convergence for the cumulative distribution function (CDF) in deconvolution as in~\cite{DatGolJud11}. Whether the rates obtained in this way would be optimal over smooth classes of functions is unknown, but this question falls beyond the scope of standard shape-constrained estimation. Instead our results show that, as in standard isotonic regression, the function $f$ can be consistently estimated in the uncoupled isotonic regression model, \emph{without smoothness assumptions}. 
Furthermore, we prove matching upper and lower bounds on the optimal rates of estimation with respect to the empirical $\ell_p$ distance, for $1 \leq p < \infty$.

The connection with deconvolution is not hard to see in hindsight: obtaining the function $f$ from the data $\{y_1, \dots, y_n\}$ resembles the problem of obtaining an estimate of a measure $\mu$ on the basis of samples from the convolution $\mu * \cD$, where $\cD$ is a known noise distribution.
As we note below, the metric of interest in our case is the Wasserstein distance between univariate distributions. While question has been recently considered in the deconvolution literature~\cite{CaiChaDed11,DedFisMic15,DedMic13}, our work present the following specificities. First, we make no smoothness assumptions on the noise distribution, and we assume, as is common in the isotonic regression literature, that the regression function $f$ has bounded variation.
This leads to different rates of estimation than those appearing in the deconvolution context.
Moreover, we employ a simple minimum distance estimator (see Section~\ref{sec:ub}) as opposed to the kernel estimators common in deconvolution.
In short, our assumptions, estimator, and results are quite different from those appearing in the deconvolution literature, despite the similarities in the problem setting.

Our techniques leverage moment matching arguments, which have proven powerful in mixture estimation~\cite{MoiVal10,BanRigWee17,WuYan18} and nonparametric statistics~\cite{LepNemSpo99, JudNem02,CaiLow11,CarVer17}.
As in those works, our lower bounds are constructed by leveraging moment comparison theorems, which connect the moments of two distributions to their total variation distance.
Our upper bounds are based on a novel result showing that the Wasserstein distance of any order between univariate measures can be controlled by moment matching.
This result significantly extends and generalizes several similar results in the literature~\cite{KonVal17,WuYan18}.

Finally, it is worth noting that uncoupled isotonic regression bears comparison to a similar problem in which the regression function is assumed to be \emph{linear} instead of isotonic. This model, which goes under the names ``estimation from a broken sample''~\cite{DeGGoe80}, ``shuffled linear regression"~\cite{AbiPooZou17}, ``linear regression without correspondence"~\cite{HsuShiSun17}, ``regression with permuted data''~\cite{PanWaiCou17,SlaBen17}, and ``unlabeled sensing"~\cite{UnnHagVet18}, has been explored from both algorithmic and statistical perspectives. On the algorithmic side, the core question in these works is how to design efficient estimators for multivariate regression problems, which is nontrivial even in the noiseless setting (i.e., when $\xi_i \equiv 0$). On the statistical side, several computationally efficient estimators have been proposed~\cite{PanWaiCou17,AbiPooZou17} with provable guarantees. However, these estimators rely heavily on the linear model and do not extend to the isotonic case. 

\subsection{Model and assumptions}

We focus on the fixed design case, as is common in the literature on isotonic regression.
We assume the existence of a nondecreasing function $f \in \cF_V$ such that
\begin{equation}\label{eqn:model}
y_i = f(x_i) + \xi_i \quad \quad 1 \leq i \leq n\,,
\end{equation}
where $\xi_i \sim \cD$ i.i.d.
We observe the design points $\{x_1, \dots, x_n\}$, which we assume to be distinct, and the (unordered) set of points $\{y_1, \dots, y_n\}$.

We make the following assumptions.

\begin{assumption}
Both $\cD$ and $V$ are known.
\end{assumption}
The assumption that $\cD$ is known is essential and is ubiquitous in the deconvolution literature: if $\cD$ is unknown, then no consistent estimator of $f$ exists.
For instance, if $\cD$ is unknown, then it is impossible to reject the hypothesis that $f$ is identically $0$, and that all the variation in the set $\{y_1, \dots, y_n\}$ is due to noise.
By contrast, the assumption that $V$ is known is for convenience only, since an upper bound on $V$ can be estimated from the data.

We also require that $\cD$ is sub-exponential~\cite{Ver18}, a concept that we define rigorously via Orlicz norms.
\begin{definition}
Let $\psi_1(x):= e^x - 1$.
We define an Orlicz norm $\orlicz{X}$ of a random variable $X$ by
\begin{equation*}
\orlicz{X} \defeq \inf \{ t > 0: \E \psi_1(|X|/t) \leq 1\}\,.
\end{equation*}
We say that a distribution $\cD$ is \emph{sub-exponential} if $\xi \sim \cD$ satisfies $\|\xi\|_{\psi_1}<\infty$ and we write by extension $\|\cD\|_{\psi_1}:=\|\xi\|_{\psi_1}$. 
\end{definition}
It can be shown that $\orlicz{\cdot}$ defines a norm on the space of random variables satisfying $\orlicz{X} < \infty$, and that a random variable has a finite moment generating function in a neighborhood of the origin if and only if $\orlicz{X} < \infty$.
We note also that if $X \in [-V, V]$ almost surely, then $\orlicz{X} \leq 2 V$.

\begin{assumption}\label{assume:sube}
The noise distribution $\cD$ is centered sub-exponential.
\end{assumption}
We note that, in particular, Assumption~\ref{assume:sube} implies that $\cD$ has finite moments of all orders.
Nevertheless, this restriction is quite mild, as this encompasses most distributions which arise in practice and in theory.

Our only use of Assumption~\ref{assume:sube} will be to provide a bound on the moments of $\cD$, which we obtain via the following well known lemma (see, e.g.,~\cite{Ver18}).
We reproduce a proof in Section~\ref{sec:proofs} that exhibits an explicit constant.
\begin{lemma}\label{lem:subexponential_moments}
For all $p \geq 1$,
\begin{equation*}
(\E |X|^{p})^{1/p} \leq p\orlicz{X}\,.
\end{equation*}
\end{lemma}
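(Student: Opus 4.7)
The plan is to exploit the exponential integrability built into the $\psi_1$-norm to bound every polynomial moment simultaneously. Writing $t = \orlicz{X}$, the definition of the Orlicz norm (together with continuity and monotonicity of $s \mapsto \E[\psi_1(|X|/s)]$) gives $\E[\psi_1(|X|/t)] \leq 1$, which rearranges to $\E[e^{|X|/t}] \leq 2$.

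The core analytic input I would use is the elementary pointwise bound, valid for all $p \geq 1$ and $x \geq 0$:
\begin{equation*}
x^p \leq (p/e)^p \, e^{x}.
\end{equation*}
This follows from maximizing $x \mapsto x^p e^{-x}$ on $[0,\infty)$: differentiating pins the unique interior critical point at $x = p$, where the function attains the value $(p/e)^p$.

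I would then apply this pointwise inequality with $x = |X|/t$ and take expectations, obtaining
\begin{equation*}
\E|X|^p \;\leq\; (p/e)^p \, t^p \, \E[e^{|X|/t}] \;\leq\; 2(p/e)^p t^p.
\end{equation*}
Taking a $p$-th root yields $(\E|X|^p)^{1/p} \leq 2^{1/p}(p/e)\,t$. Since $2^{1/p} \leq 2 < e$ for every $p \geq 1$, the prefactor $2^{1/p}/e \leq 1$, so the bound collapses to $(\E|X|^p)^{1/p} \leq p\,\orlicz{X}$, which is the claim.

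I do not expect any serious obstacle; the only subtlety is choosing a comparison between polynomial and exponential growth that produces the clean linear-in-$p$ constant. The more standard route—apply Markov to $e^{|X|/t}$ to get $\PP(|X| \geq u) \leq 2 e^{-u/t}$ and then integrate via $\E|X|^p = p\int_0^\infty u^{p-1}\PP(|X| \geq u)\,du$—gives a bound of order $\Gamma(p+1)^{1/p} t$, which is asymptotically equivalent but strictly worse than $pt$ for small $p$ (it already fails at $p=1$). The $x^p e^{-x}$ extremization sidesteps this loss and delivers the stated constant directly.
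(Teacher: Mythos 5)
Your proof is correct and structurally the same as the paper's: both reduce to a pointwise inequality comparing $|X|^p$ to an exponential of $|X|$, then take expectations and invoke $\E\psi_1(|X|/\orlicz{X}) \le 1$. The only difference is the choice of pointwise bound: the paper uses the chain $(x/p)^p \le (e^{x/p}-1)^p \le e^x - 1 = \psi_1(x)$, which targets $\psi_1$ directly and so loses no constant, whereas you compare to $e^x$, pick up the factor $\E e^{|X|/t} \le 2$, and absorb it via $2^{1/p}/e < 1$; both land on the identical bound $p\,\orlicz{X}$.
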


\subsection{Main results}
Our main results are matching upper and lower minimax bounds for the problem of estimating the regression function in the $\ell_p$ distance, for any $1 \leq p < \infty$.

\begin{theorem}[Upper bound]\label{thm:main_ub}
Assume that $\cD$ is sub-exponential.
There exists an estimator $\hat f_n$ and a universal constant $C$, such that, for all $1 \leq p < \infty$, the risk of $\hat f_n$ over $\cF_V$ satisfies
\begin{equation*}
\sup_{f \in \cF_V} (\E \|f - \hat f_n\|_p^p)^{1/p} \leq  C p V \frac{\log \log n}{\log n}(1 + o_{V, \cD, p}(1))\,,
\end{equation*}
Where $o_{V, \cD, p}(1)$ indicates a quantity depending on $V$, $\orlicz{\cD}$, and $p$ that goes to $0$ as $n \to \infty$.
\end{theorem}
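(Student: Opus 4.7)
The plan is to define the estimator via \emph{minimum Wasserstein deconvolution}. Let $\hat\nu_n = \frac{1}{n}\sum_{i=1}^n \delta_{y_i}$ be the empirical measure of the observations and, for any $g \in \cF_V$, let $\mu_g = \frac{1}{n}\sum_{i=1}^n \delta_{g(x_i)}$. I would set
\[
\hat f_n \in \argmin_{g \in \cF_V} W_1(\mu_g * \cD,\, \hat\nu_n),
\]
which amounts to an optimization over the compact set of sequences $-V \leq g(x_1) \leq \dots \leq g(x_n) \leq V$, so a minimizer exists. Since the $y_i$'s are independent with mean measure $\mu_f * \cD$, the empirical distribution $\hat\nu_n$ concentrates around $\mu_f * \cD$ in $W_1$ at rate $n^{-1/2}$ up to polylogarithmic factors, using Lemma~\ref{lem:subexponential_moments} to control the contribution of atypically large $y_i$. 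The optimality of $\hat f_n$ together with the triangle inequality then yields $W_1(\mu_{\hat f_n} * \cD,\, \mu_f * \cD) \lesssim n^{-1/2}$ in the same sense.

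The next step is the identity $\|\hat f_n - f\|_p^p = W_p^p(\mu_{\hat f_n}, \mu_f)$: both $\hat f_n$ and $f$ are nondecreasing on the ordered design, so the coupling that pairs $\hat f_n(x_i)$ with $f(x_i)$ is the monotone one, which is known to be optimal for $W_p$ between equally weighted discrete measures. This reduces the task to bounding $W_p(\mu_{\hat f_n}, \mu_f)$ from $W_1$-closeness of their convolutions with $\cD$.

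The heart of the proof is a new moment-matching deconvolution inequality for $W_p$: for any probability measures $\mu, \nu$ supported in $[-V, V]$ and any integer $k \geq 1$,
\[
W_p(\mu, \nu) \,\lesssim\, \frac{pV}{k} \;+\; V\, C(\cD, k) \max_{0 \leq j \leq k}\Bigl|\int z^j\,d(\mu * \cD) - \int z^j\,d(\nu * \cD)\Bigr|,
\]
where $C(\cD, k) \leq (C k \orlicz{\cD})^k$ by Lemma~\ref{lem:subexponential_moments}. The first term comes from a degree-$k$ polynomial approximation of $|x-y|^p$ on $[-V,V]^2$ via Jackson-type estimates, extending the $W_1$-level results of \cite{KonVal17,WuYan18}. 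The second term follows by inverting the triangular system $m_k(\mu * \cD) = \sum_{j=0}^k \binom{k}{j} m_j(\mu)\, m_{k-j}(\cD)$ to express moments of $\mu$ in terms of moments of $\mu * \cD$ weighted by moments of $\cD$.

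To apply the inequality I would convert $W_1$-closeness of the convolutions into moment-closeness using that $z \mapsto z^j$ is Lipschitz of order $j(V + k\orlicz{\cD})^{j-1}$ on a truncation window of radius $V + k\orlicz{\cD}$, while the tails beyond this window are absorbed via Assumption~\ref{assume:sube}. Substituting yields an error of order $pV/k + V(Ck\orlicz{\cD})^{2k}/\sqrt{n}$, and choosing $k \asymp \log n / \log \log n$ balances the two contributions to deliver the announced rate $pV\log\log n / \log n$. The main obstacle is the moment-matching inequality itself: extending the known arguments from $W_1$ to $W_p$ while keeping a linear dependence on $p$ requires a sharp polynomial approximation of $|x-y|^p$, and this is the delicate novel ingredient. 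Secondary technical difficulties are the tail-truncation bookkeeping and establishing $W_1$-concentration for the empirical measure of independent but non-identically distributed sub-exponential observations.
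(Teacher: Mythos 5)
Your high-level architecture matches the paper's: define a minimum Wasserstein deconvolution estimator, derive closeness in Wasserstein distance of the convolved measures, convert that to moment closeness, deconvolve the moments, and then convert moment closeness back to a $W_p$ bound via the isometry $\|f-g\|_p = W_p(\pi_f,\pi_g)$ (Proposition~\ref{prop:log_is_wasserstein}). However, there are two substantive issues, one minor and one fatal.

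The minor issue: you define $\hat f_n$ via $W_1$, whereas the paper uses $W_2$ (and remarks that any $W_r$ with $r\in(1,\infty)$ works, pointedly excluding $r=1$). The reason is that the convolved measures $\pi_{\hat f}*\cD$, $\pi_f*\cD$ have unbounded support, and translating $W_1$-closeness into $\Delta_\ell$-closeness requires the $W_1$ dual over Lipschitz functions, which breaks without compactness. Your truncation-window fix is not implausible, but the paper sidesteps the bookkeeping entirely by invoking a $W_2$-to-moments bound~\cite{PflPic14} (Theorem~\ref{thm:w2_to_moments}) that holds for unbounded sub-exponential measures directly. This is a real tradeoff, not a mistake, but it costs you nontrivial technical work with no benefit.

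The fatal issue is the moment-matching inequality for $W_p$ itself, which you correctly identify as the crux. You propose to obtain the $pV/k$ term by ``a degree-$k$ polynomial approximation of $|x-y|^p$ on $[-V,V]^2$ via Jackson-type estimates, extending the $W_1$-level results of \cite{KonVal17,WuYan18}.'' This does not extend. The $W_1$ argument works because $W_1(\mu,\nu)=\sup_{\|\phi\|_{\mathrm{Lip}}\le 1}\int\phi\,d(\mu-\nu)$, so a uniform polynomial approximation of the dual function $\phi$ directly produces a linear combination of moment differences. For $p>1$, Kantorovich duality gives $W_p^p(\mu,\nu)=\sup\{\int\phi\,d\mu+\int\psi\,d\nu:\phi\oplus\psi\le|x-y|^p\}$, and the functional is no longer of the form $\int\phi\,d(\mu-\nu)$; approximating the \emph{cost} $|x-y|^p$ by a bivariate polynomial does not convert the dual into moment differences. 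Moreover, even granting some version of this, a degree-$k$ polynomial approximation of $|x-y|^p$, a Lipschitz function with constant of order $p(2V)^{p-1}$, yields at best an error of order $pV^p/k$ for the quantity $W_p^p$, i.e.\ $W_p\lesssim V(p/k)^{1/p}$ --- a $k^{-1/p}$ decay rather than $k^{-1}$. With $k\asymp\log n/\log\log n$, that gives $(\log\log n/\log n)^{1/p}$, which is strictly weaker than the announced rate for every $p>1$. The paper avoids this entirely with a kernel-smoothing argument (Theorem~\ref{thm:moment_matching_wp}): convolve $\mu$ and $\nu$ with a carefully rescaled sinc-type kernel $\cS_m$ whose density $f_m$ has fast-decaying derivatives (Lemma~\ref{lem:derivative_bound}), bound $W_p$ of the smoothed measures by the weighted $L^1$ distance of densities via \cite[Theorem~6.15]{Vil08}, and control that density difference by a Taylor series whose coefficients are exactly the $\Delta_\ell$. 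This yields $W_p(\mu,\nu)\le Cp\sup_\ell \Delta_\ell/\ell$ with the required \emph{linear} $p$-dependence and $1/k$-decay, and is the key technical novelty you would need to rediscover or replace before the rest of your argument can close.
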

The estimator $\hat f_n$ appearing in Theorem~\ref{thm:main_ub} is a minimum distance estimator with respect to the {Wasserstein distance}, which we call a \emph{minimum Wasserstein deconvolution estimator} (see Section~\ref{sec:ot}).
Surprisingly, the same estimator achieves the above bound for all $1 \leq p < \infty$.
An analysis of this estimator appears in Section~\ref{sec:ub}.

We complement this result with the following lower bound, which holds already in the case when $\cD$ is the standard Gaussian distribution.

\begin{theorem}[Lower bound]\label{thm:main_lb}
Let $\cD = N(0, 1)$.
Under the same conditions as Theorem~\ref{thm:main_ub}, there exists a universal constant $C'$ such that the estimation risk over the class $\cF_V$ satisfies
\begin{equation*}
\inf_{g_n} \sup_{f \in \cF_V} \E \|f - g_n\|_p \geq C' V \frac{\log \log n}{\log n}(1 + o_{V}(1))\,,
\end{equation*}
where the infimum is taken over all measurable functions of the data.
\end{theorem}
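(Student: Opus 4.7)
\medskip
\noindent\emph{Proof proposal.}
The plan is to apply Le Cam's two-point method. We seek a pair of nondecreasing functions $f_0, f_1 \in \cF_V$ whose $\ell_p$ separation is of order $V \log\log n / \log n$, while the laws $\tilde P_0, \tilde P_1$ of the unordered observations $\{y_1, \dots, y_n\}$ satisfy $\mathrm{TV}(\tilde P_0, \tilde P_1) \leq 1/2$. The lower bound on the minimax risk then follows from the standard reduction from minimax estimation to hypothesis testing.

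The hypotheses are built by moment matching. Fix an integer $k \asymp \log n / \log\log n$ (to be tuned). Via a Chebyshev-type extremal argument (as in the mixture lower bounds of~\cite{WuYan18}), one constructs two atomic probability measures $\nu_0, \nu_1$ supported on $[-V, V]$ such that: (i) their first $k$ moments coincide, and (ii) $W_p(\nu_0, \nu_1) \gtrsim V/k$. After a small replication/quantization step, each $\nu_j$ can be taken to consist of $n$ equally weighted atoms. Define $f_j$ as the nondecreasing step function on the grid $x_1 < \dots < x_n$ whose sorted list of values coincides with the atoms of $\nu_j$. Because the monotone coupling is $W_p$-optimal for measures on $\RR$,
\[
\|f_0 - f_1\|_p = W_p(\nu_0, \nu_1) \gtrsim V/k \asymp V \tfrac{\log\log n}{\log n}\,,
\]
which is the targeted separation.

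The heart of the argument is an upper bound on $\mathrm{TV}(\tilde P_0, \tilde P_1)$. Because the data is unordered, $\tilde P_j$ depends on $f_j$ only through the empirical value-measure $\nu_j$, and the empirical measure $\hat\mu_n = \tfrac{1}{n}\sum_i \delta_{y_i}$ is sufficient. A Poissonization step, or a direct comparison with $n$ i.i.d.\ samples from the convolution $\bar\mu_j := \nu_j * N(0,1)$, reduces the task to establishing
\[
\mathrm{TV}(\tilde P_0, \tilde P_1) \lesssim n\,\mathrm{TV}(\bar\mu_0, \bar\mu_1)\,.
\]
The single-sample distance is then controlled via the Hermite expansion of the Gaussian density. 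Writing $\phi(y-t) = \phi(y)\sum_{j \ge 0} H_j(y) t^j/j!$ with $H_j$ the probabilist's Hermite polynomials, the vanishing of the first $k$ moments of $\nu_0 - \nu_1$ gives
\[
(\bar\mu_0 - \bar\mu_1)(y) = \phi(y) \sum_{j \ge k+1} \frac{H_j(y)}{j!}\bigl(m_j(\nu_0) - m_j(\nu_1)\bigr)\,.
\]
Using $|m_j(\nu_\ell)| \le V^j$ and $\int \phi(y)|H_j(y)|\,dy \le \sqrt{j!}$, one obtains $\mathrm{TV}(\bar\mu_0, \bar\mu_1) \lesssim V^{k+1}/\sqrt{(k+1)!}$. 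For the product bound to stay below a small constant, Stirling forces $k \asymp \log n /\log\log n$, which matches the separation above and closes the argument.

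The principal technical obstacle is the second step: producing a sufficiently sharp bound on $\mathrm{TV}(\tilde P_0, \tilde P_1)$ for the unordered sample. The naive product estimate $\mathrm{TV}\bigl(\otimes_i \mu_{0,i},\otimes_i \mu_{1,i}\bigr) \lesssim \sqrt n\,\|f_0-f_1\|_2$ ignores symmetrization and is hopelessly loose at our desired separation rate. Converting the single-sample moment-matching estimate into a joint law bound---whether through Poissonization, a permanent or $\chi^2$ computation exploiting the exchangeable structure, or an explicit coupling between sampling with and without replacement from $\bar\mu_j$---is the delicate step that must be made rigorous.
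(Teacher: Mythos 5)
The construction you propose---two measures on $[-V,V]$ matching $\asymp \log n/\log\log n$ moments, $W_p$-separated by $V/k$, turned into step functions on the design grid---is exactly the right ingredient, and your scaling of $k$ is correct. But the place you flag as ``the delicate step that must be made rigorous'' is a genuine gap, and it is precisely what the paper's argument is designed to avoid.

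The problem is that for a \emph{fixed} monotone function $f$, the law $\tilde P_f$ of the unordered set $\{f(x_i)+\xi_i\}$ is \emph{not} $(\pi_f * N(0,1))^{\otimes n}$: adding i.i.d.\ noise to the $n$ fixed atoms of $\pi_f$ is sampling the value-measure \emph{without} replacement, not i.i.d.\ sampling from the convolution. So your reduction ``$\mathrm{TV}(\tilde P_0,\tilde P_1)\lesssim n\,\mathrm{TV}(\bar\mu_0,\bar\mu_1)$'' is not a consequence of tensorization of TV (which applies only to product measures), and coupling the with- and without-replacement schemes at the required accuracy is exactly the hard part you have not resolved. In addition, your ``small replication/quantization step'' to turn each $\nu_j$ into $n$ equal atoms would perturb the moments and break the exact moment matching you need; this perturbation also has to be controlled.

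The paper sidesteps both issues by using the method of fuzzy hypotheses rather than a two-point argument. Instead of fixing $f_0, f_1$, it places a prior on $f$: sample $Z_1,\dots,Z_n$ i.i.d.\ from $P$ (resp.\ $Q$) and let $F$ be the monotone function whose sorted values are the sorted $Z_i$. Under this prior, the set $\{F(x_{(i)})+\xi_i\}$ is exactly $\{Z_i+\xi_i\}$, i.e.\ $n$ i.i.d.\ samples from $P*N(0,1)$ (resp.\ $Q*N(0,1)$). The mixture over the prior is thus a genuine product measure, and one can apply a $\chi^2$ tensorization bound (the paper's Lemma~\ref{lem:chi_square}, from~\cite{CaiLow11}) directly. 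A concentration step ($\int W_1(\pi_F,\mu)\,d\mu^{\otimes n}\lesssim V/\sqrt n$) then shows the random hypotheses are still $W_1$-separated by $\gtrsim V/k$ with high probability, replacing your deterministic separation. If you want to fix your proposal, you should adopt the random-prior construction; the two-point version would require a quantitatively sharp coupling between with- and without-replacement noisy sampling that does not obviously exist at the $V/k$ scale.
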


The proofs of Theorems~\ref{thm:main_ub} and~\ref{thm:main_lb} rely on hitherto unexplored connections between isotonic regression and optimal transport between probability measures~\cite{Vil08}.
To exploit this connection, we establish a novel result connecting the Wasserstein $p$-distance between two univariate distributions with the differences in the moments of the two distributions (Theorem~\ref{thm:moment_matching_wp}).
Since we believe this connection will prove useful for other works, we prove a more general version than is needed to obtain Theorems~\ref{thm:main_ub} and~\ref{thm:main_lb}.
While similar results have appeared elsewhere in the literature for the $W_1$ distance~\cite{KonVal17,WuYan18}, our general version is the first to our knowledge to apply to $W_p$ for $p > 1$ and to unbounded measures.

\section{Uncoupled regression via optimal transport}\label{sec:ot}
The observation that forms the core of our work is that the uncoupled regression model naturally relates to the Wasserstein distance between univariate measures.

\subsection{Minimum Wasserstein deconvolution}
We first recall the following  definition.

\begin{definition}
For $1 \leq p < \infty$, the \emph{Wasserstein-$p$ distance} between two probability distributions $\mu$ and $\nu$ is defined by
\begin{equation}
\label{eqn:def_wp}
W_p(\mu, \nu) := \inf_{\gamma \in \cC(\mu, \nu)} \left(\int_\infty^\infty |x - y|^p \, \mathrm{d}\gamma(x, y)\right)^{1/p}\,,
\end{equation}
where the infimum is taken over the set $\cC(\mu, \nu)$ of all joint distributions on $\RR \times \RR$ with first marginal $\mu$ and second marginal $\nu$.
\end{definition}

For all $p \ge 1$, the space $\cM$ of probability measures having finite moments of all orders equipped with the distance $W_p$ defines a metric space denoted by $(\cM, W_p)$.
The key observation is that the risk in isotonic regression can be controlled via the Wasserstein distance. To see this, we need the following definition.

\begin{definition}\label{def:pushforward}
Let $x_1, \dots, x_n$ be fixed.
For any nondecreasing function $g: [0, 1] \to \RR$, denote by $\pi_g$ the measure
\begin{equation*}
\frac 1 n \sum_{i=1}^n \delta_{g(x_i)}\,.
\end{equation*}
We call $\pi_g$ a \emph{pushforward measure} (of the uniform measure on $\{x_1, \ldots, x_n\}$ through $g$).
\end{definition}

The following proposition establishes the central connection between isotonic regression functions and the Wasserstein distance.

\begin{proposition}\label{prop:log_is_wasserstein}
Let $\cF$ be the class of nondecreasing functions from $[0, 1]$ to $\RR$.
For all $1 \leq p < \infty$, the map $f \mapsto \pi_f$ is an isometry between $(\cF, \ell_p)$ and $(\cM, W_p)$.
In other words, the empirical $\ell_p$ distance corresponds to the Wasserstein distance between the pushforward measures:
\begin{equation*}
\|f - g\|_p = W_p(\pi_f, \pi_g)\,.
\end{equation*}
\end{proposition}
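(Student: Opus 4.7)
The plan is to reduce the equality $\|f-g\|_p = W_p(\pi_f, \pi_g)$ to the classical fact that one-dimensional optimal transport with a convex cost is solved by the monotone (sorted) coupling. The proof naturally splits into two inequalities.

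For the direction $W_p(\pi_f, \pi_g) \leq \|f-g\|_p$, I would construct an explicit coupling. Consider
\begin{equation*}
\gamma^\star \defeq \frac{1}{n} \sum_{i=1}^n \delta_{(f(x_i), g(x_i))}.
\end{equation*}
By construction, the first marginal of $\gamma^\star$ is $\pi_f$ and the second marginal is $\pi_g$, so $\gamma^\star \in \cC(\pi_f, \pi_g)$. Plugging this coupling into the definition \eqref{eqn:def_wp} yields
\begin{equation*}
W_p(\pi_f, \pi_g)^p \leq \int |x-y|^p \, \mathrm{d}\gamma^\star(x,y) = \frac{1}{n}\sum_{i=1}^n |f(x_i) - g(x_i)|^p = \|f-g\|_p^p.
\end{equation*}

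For the reverse inequality, I would invoke the standard one-dimensional optimal transport result: for any two atomic measures $\mu = \tfrac{1}{n}\sum_i \delta_{a_i}$ and $\nu = \tfrac{1}{n}\sum_i \delta_{b_i}$ with $a_{(1)} \le \cdots \le a_{(n)}$ and $b_{(1)} \le \cdots \le b_{(n)}$, the monotone rearrangement is optimal for any convex cost, so that $W_p(\mu,\nu)^p = \tfrac{1}{n}\sum_i |a_{(i)} - b_{(i)}|^p$. This can be justified either by the quantile representation $W_p(\mu,\nu)^p = \int_0^1 |F_\mu^{-1}(t) - F_\nu^{-1}(t)|^p \, \mathrm{d}t$ (which is direct for empirical measures with equal atomic weights), or equivalently via the Hardy--Littlewood--P\'olya rearrangement inequality applied to the convex function $t\mapsto t^p$. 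Since by assumption $x_1 < \cdots < x_n$ and both $f$ and $g$ are nondecreasing, the sequences $\bigl(f(x_i)\bigr)_{i=1}^n$ and $\bigl(g(x_i)\bigr)_{i=1}^n$ are already sorted in increasing order. Hence the index-$i$ matching \emph{is} the monotone coupling, and
\begin{equation*}
W_p(\pi_f, \pi_g)^p = \frac{1}{n} \sum_{i=1}^n |f(x_i) - g(x_i)|^p = \|f-g\|_p^p.
\end{equation*}

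There is no real obstacle here; the only substantive ingredient is the one-dimensional optimality of the monotone rearrangement, which is classical (see, e.g., Villani). The key observation worth stressing in the write-up is that the monotonicity of $f$ and $g$ is exactly what is needed so that the canonical pairing $(f(x_i), g(x_i))$ coincides with the sorted matching; without monotonicity one would have $\|f-g\|_p \geq W_p(\pi_f, \pi_g)$ in general, and the isometry would fail. Combining the two inequalities yields the claimed equality, and since the map $f\mapsto \pi_f$ is clearly well defined on $\cF$, this gives the isometric embedding of $(\cF,\ell_p)$ into $(\cM, W_p)$.
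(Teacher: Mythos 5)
Your proof is correct and follows essentially the same route as the paper: construct the coupling $\gamma = \frac{1}{n}\sum_{i=1}^n \delta_{(f(x_i),g(x_i))}$ and observe that, by monotonicity of $f$ and $g$, it is the monotone coupling and hence optimal for any convex cost (the paper cites~\cite[Theorem~2.9]{San15}; you cite the quantile/Hardy--Littlewood--P\'olya formulation of the same classical fact). No gaps.
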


\begin{proof}
Let $\gamma = \frac 1n \sum_{i=1}^n \delta_{(f(x_i), g(x_i))}$.
Clearly $\|f - g\|_p^p = \int_{-\infty}^\infty |x - y|^p \, \mathrm{d}\gamma(x, y)$, and $\gamma$ is a coupling between $\pi_f$ and $\pi_g$.
It suffices to show that this coupling is optimal in the sense that it realizes the minimum definition~\eqref{eqn:def_wp}.
For $i, j \in [n]$, the monotonicity of $f$ and $g$ implies
\begin{equation*}
(f(x_i) - f(x_j))(g(x_i) - g(x_j)) \geq 0\,.
\end{equation*}
Therefore, the support $\left\{\left(f(x_i),g(x_i)\right), i=1, \ldots, n\right\}$ of $\gamma$ is \emph{monotone}, meaning that for any $(a, b), (c, d) \in \mathrm{supp}(\gamma)$, the implication $a < c \implies b \leq d$ holds.
Standard facts~\cite[Theorem~2.9]{San15} then imply that it is optimal.
\end{proof}

Denote by $\hat \pi$ the empirical distribution of the observation $\{y_1, \dots, y_n\}$. A sample from $\hat \pi$ is marginally distributed as $\pi_f * \cD$, the convolution of the pushforward measure $\pi_f$ with the noise distribution $\cD$.
Thus, finding $\pi_f$ can be viewed as a deconvolution problem, or equivalently as a mixture learning problem whose centers are given by the distribution $\pi_f$. Consequently, our estimator is similar to estimators proposed in the mixture learning literature.
One common choice is to choose the parameter that minimizes the distance to the empirical distribution in the Kolomogorov-Smirnov distance~\cite{DeeKru68,Che95,HeiKah15}; however, Proposition~\ref{prop:log_is_wasserstein} suggests as an estimator a minimizer of $g \mapsto W_p(\pi_g * \cD, \hat \pi)$ over a suitable function class.
Such estimators were introduced in~\cite{BasBodReg06} under the name \emph{minimum Kantorovich distance estimators} and shown to be consistent under regularity assumptions. By analogy, we call our technique \emph{minimum Wasserstein deconvolution}.\footnote{Note that the shorthand \emph{Wassertein deconvolution} has appeared in the deconvolution literature~\cite{DedMic13,DedFisMic15} to refer to deconvolution problems in which the Wasserstein distance is used as a measure of success. We emphasize that \emph{minimum Wasserstein deconvolution} refers here to a novel method to perform deconvolution based on Wasserstein distances. Moreover, in light of Proposition~\ref{prop:log_is_wasserstein}, this method also achieves good performance in the Wasserstein metric.}

We focus on the following estimator:
\begin{equation}\label{eqn:est_def}
\hat f \in \argmin_{g \in \cF_V} W^2_2(\pi_g * \cD, \hat \pi)\,.
\end{equation}

As Theorem~\ref{thm:main_ub} shows, the estimator $\hat f$ is \emph{adaptive} to $p$, in the sense that it converges to $f$ at the same rate in all $\ell_p$ metrics. Furthermore, by Theorem~\ref{thm:main_lb}, this rate is minimax optimal.

The definition of our estimator involves the distance $W_2$. However, our analysis reveals that $W_2$ can be replaced by $W_r$ for any $r \in (1, \infty)$ to obtain an estimator with the same performance.
Indeed, the interested reader may check that the only results which need to be updated are Theorem~\ref{thm:w2_to_moments} and Proposition~\ref{proposition:estimator_expectation}.
Theorem~\ref{thm:w2_to_moments} can be replaced by a similar argument following~\cite[Proposition~2.21, (ii)]{PflPic14}.
Likewise, Proposition~\ref{proposition:estimator_expectation} holds with exactly the same proof, since it relies only on the triangle inequality (which holds for all $W_r$) and Lemma~\ref{lem:empirical_w2}, a more general version of which can be found in~\cite{BobLed16}.

\subsection{A computationally efficient estimator}\label{sec:computational}
\emph{A priori}, it is unclear how to optimize the function $f \mapsto W_2^2(\pi_f * \cD, \hat \pi)$ explicitly. In order to obtain an estimator which can be computed in polynomial time, we propose in this section
a computationally efficient version of~\eqref{eqn:est_def}, which enjoys the same theoretical guarantees.
We first relax~\eqref{eqn:est_def} and consider instead the program
\begin{equation*}
\argmin_{\mu \in \cM_V} W_2^2(\mu * \cD, \hat \pi)\,,
\end{equation*}
where the minimization is taken over \emph{all} measures with support in $[-V, V]$.
This is now a convex program, albeit an infinite dimensional one.
However, we show below that it suffices to optimize over a finite-dimensional subset of $\cM_V$, which yields a tractable convex program.
Finally, we show how to round the resulting solution $\hat \mu$ to a pushforward measure in the sense of Definition~\ref{def:pushforward}.

We first consider the following quantization of the real line.
Assume $n \geq 3$.
Let $\alpha_0 \defeq -  (V + \sigma) \log n $ and set
\begin{equation*}
\alpha_i \defeq \alpha_0 + i \cdot \frac{V + \sigma}{n^{1/4}} \quad \quad \text{for $1 \leq i \leq N \defeq \lceil 2 n^{1/4} \log n \rceil$}\,.
\end{equation*}
Let $\cA \defeq \{\alpha_i\}_{i = 0}^N$, and denote by $\cM_{\cA, V}$ the set of measures supported on $\cA \cap [-V, V]$, which is a discrete set of cardinality $O(n^{1/4})$.
Finally, define the projection operator $\Pi_\cA: \RR \to \cA$ by
\begin{equation*}
\Pi_\cA(x) \defeq \left\{\begin{array}{ll}
\alpha_0 & \text{ if $x < \alpha_0$} \\
\alpha_i & \text{ if $\alpha_i \leq x < \alpha_{i+1}$ for $0 \leq i \leq N-1$} \\
\alpha_N & \text{ if $x \geq \alpha_N$.}
\end{array}\right.
\end{equation*}

We propose the following computationally efficient estimator:
\begin{equation}\label{eqn:efficient}
\hat \mu \in \argmin_{\mu \in \cM_{\cA, V}} W_2^2({\Pi_\cA}_\sharp(\mu * \cD), \hat \pi)\,,
\end{equation}
where ${\Pi_\cA}_\sharp(\mu * \cD)$ is the pushforward of the measure $\mu * \cD$ by the projection operator.
The map $\mu \mapsto W_2^2({\Pi_\cA}_\sharp(\mu * \cD), \hat \pi)$ is convex, and subgradients can be obtained by standard methods in computational optimal transport~\cite{PeyCut18}.
The measure $\hat \mu$ can therefore be obtained efficiently.

In general, the solution $\hat \mu$ to~\eqref{eqn:efficient} will not be of the form $\pi_g$ for some isotonic function $g$.
However, a sufficiently close function can easily be obtained.
Given a measure $\mu$, denote by $\cQ_\mu$ the quantile function of $\mu$; we then define $\hat g$ by
\begin{equation*}
\hat g(x_i) := \cQ_{\hat \mu}(i/n) \quad \quad \text{for $1 \leq i \leq n$}\,,
\end{equation*}
and extend $\hat g$ to other values in $[0, 1]$ arbitrarily so that the resulting function lies in $\cF_V$.

\begin{proposition}\label{prop:efficient_rate}
The estimator $\hat g$ achieves the same rate as the estimator $\hat f$ defined in~\eqref{eqn:est_def}.
\end{proposition}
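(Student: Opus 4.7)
The plan is to show that $\hat g$ is close to $f$ by mimicking, with a controlled amount of overhead, the argument behind Theorem~\ref{thm:main_ub} for the ideal estimator $\hat f$. First I would use Proposition~\ref{prop:log_is_wasserstein} to identify $\|\hat g - f\|_p$ with $W_p(\pi_{\hat g}, \pi_f)$ and apply the triangle inequality
\[
W_p(\pi_{\hat g}, \pi_f) \le W_p(\pi_{\hat g}, \hat \mu) + W_p(\hat \mu, \pi_f).
\]
The first term quantifies the quantile-rounding step: since $\hat g(x_i) = \cQ_{\hat \mu}(i/n)$, the measure $\pi_{\hat g}$ is the standard $n$-point quantile discretization of $\hat \mu$, and $\hat \mu \in \cM_{\cA,V}$ is supported in $[-V,V]$. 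A classical univariate-quantization bound then gives $W_p(\pi_{\hat g}, \hat \mu) \lesssim V/n^{1/p}$, which is negligible next to the target rate $\log\log n / \log n$.

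The substantive work is to bound $W_p(\hat \mu, \pi_f)$. To this end I would introduce the explicit competitor $\mu^* := {\Pi_\cA}_\sharp \pi_f$, which lies in $\cM_{\cA,V}$ because $\pi_f$ is supported in $[-V,V]$ and $\cA$ covers that interval with spacing $(V+\sigma)/n^{1/4}$; one checks $W_2(\mu^*, \pi_f) \lesssim (V+\sigma)/n^{1/4}$. The optimality of $\hat \mu$ in~\eqref{eqn:efficient} yields
\[
W_2({\Pi_\cA}_\sharp(\hat\mu *\cD), \hat\pi) \le W_2({\Pi_\cA}_\sharp(\mu^* *\cD), \hat\pi),
\]
and the right-hand side splits via the triangle inequality into (i) the projection error $W_2({\Pi_\cA}_\sharp(\mu^* * \cD), \mu^* * \cD)$, (ii) the convolution-contraction bound $W_2(\mu^* * \cD, \pi_f * \cD) \le W_2(\mu^*, \pi_f)$, and (iii) the empirical Wasserstein error $W_2(\pi_f * \cD, \hat\pi)$ controlled by (a suitable variant of) Lemma~\ref{lem:empirical_w2}. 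A symmetric triangle inequality, plus a second application of the projection-error bound for $\hat \mu$ in place of $\mu^*$, then converts this into a bound on $W_2(\hat\mu * \cD, \pi_f * \cD)$ of order much smaller than $\log\log n / \log n$.

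Finally, I would invoke the moment-matching machinery underpinning Theorem~\ref{thm:main_ub} (in particular Theorem~\ref{thm:w2_to_moments} and Proposition~\ref{proposition:estimator_expectation}) to pass from a small $W_2$ distance between the convolved measures $\hat\mu * \cD$ and $\pi_f * \cD$ to a $W_p$ bound between $\hat\mu$ and $\pi_f$; because these tools only require that $\hat \mu$ is a near-minimizer of $\mu \mapsto W_2(\mu * \cD, \hat \pi)$ and that $\hat \mu$ is supported in $[-V,V]$, they apply verbatim to yield the same rate $pV\log\log n / \log n$ obtained for $\hat f$. The main obstacle is controlling the projection error $W_2(\nu, {\Pi_\cA}_\sharp \nu)$ for $\nu = \mu * \cD$ with $\mu$ compactly supported but $\cD$ merely sub-exponential: the grid $\cA$ has bulk spacing $(V+\sigma)/n^{1/4}$ but truncates outside $[-(V+\sigma)\log n, (V+\sigma)\log n]$, so one must use the sub-exponential tails of $\cD$ (via Lemma~\ref{lem:subexponential_moments}) to verify that the tail contribution $\EE[X^2\,\mathbf{1}_{|X|>(V+\sigma)\log n}]$ is vanishingly small. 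Once these tail estimates are in hand, everything else reduces to routine triangle-inequality bookkeeping layered on top of the Theorem~\ref{thm:main_ub} argument.
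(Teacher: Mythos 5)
Your proposal is correct and follows essentially the same route as the paper: compare the discretized estimator to an explicit nearby competitor in $\cM_{\cA,V}$ via the optimality of $\hat\mu$, control the quantization/projection errors (the analogues of Lemmas~\ref{lem:disc_close}, \ref{lem:proj_close}, \ref{lem:round_close}), use the contraction $W_2(\alpha*\cD,\beta*\cD)\le W_2(\alpha,\beta)$ and the empirical $W_2$ bound, and then observe that the only property of $\hat f$ used in Theorem~\ref{thm:main_ub} is the $W_2$ bound of Proposition~\ref{proposition:estimator_expectation}, which $\hat\mu$ (and hence $\hat g$) inherits up to an $O((\sigma+V)n^{-1/4})$ slack. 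One small fix: $\mu^* := {\Pi_\cA}_\sharp \pi_f$ need not lie in $\cM_{\cA,V}$ because $\Pi_\cA$ rounds down and can send points near $-V$ slightly below $-V$; you should instead project onto $\cA\cap[-V,V]$ (the paper's $\Pi_{\cA,V}$ from Lemma~\ref{lem:disc_close}), which costs nothing since the spacing is unchanged.
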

The proof is deferred to Appendix~\ref{sec:proof:prop:efficient_rate}.

\subsection{From Wasserstein distances to moment-matching, and back}
\label{sec:wass_moments}
Both the upper and lower bounds for the uncoupled regression problem (Theorems~\ref{thm:main_ub} and~\ref{thm:main_lb}) depend on moment-matching arguments that we gather here.
The core of our approach is Theorem~\ref{thm:moment_matching_wp}, which establishes that the Wasserstein distance between univariate measures can be controlled by comparing the moments of the two measures.
In Proposition~\ref{proposition:moment_matching_tight}, we give examples establishing that Theorem~\ref{thm:moment_matching_wp} cannot be improved in general.

Similar moment-matching results for the Wasserstein-1 distance $W_1$ have appeared in other works~\cite{KonVal17,WuYan18}, but in general these results rely on arguments via polynomial approximation of Lipschitz functions combined with the dual representation of $W_1$~\cite{Vil03}.
This approach breaks down for measures with unbounded support, and cannot establish tight bounds $W_p$ for $p > 1$.
By contrast, Theorem~\ref{thm:moment_matching_wp} applies to all measures with convergent moment generating functions, and yields bounds for $W_p$ for all $1 \leq p < \infty$.

\begin{definition}
For any distributions $\mu$ and $\nu$ on $\RR$ and $\ell \geq 1$, define
\begin{equation*}
\Delta_\ell(\mu, \nu) \defeq \left| \E[X^\ell] - \E[Y^\ell]\right|^{1/\ell} \quad \quad X \sim \mu, Y \sim \nu\,.
\end{equation*}
When $\mu$ and $\nu$ are clear from context, we abbreviate $\Delta_\ell(\mu, \nu)$ by $\Delta_\ell$.
\end{definition}

We are now in a position to state the main result of this section: it shows that two distributions with similar moments are close in Wasserstein distance. Its proof is postponed to Appendix~\ref{app:momentmatchingproof}.

\begin{theorem}\label{thm:moment_matching_wp}
Let $\mu$ and $\nu$ be two distributions on $\RR$ whose moment generating functions are finite everywhere.
There exists a universal constant $C > 0$ such that, for $1 \leq p < \infty$,
\begin{equation*}
W_p(\mu, \nu) \leq C p  \sup_{\ell \geq 1} \frac{\Delta_\ell(\mu, \nu)}{\ell}\,.
\end{equation*}
\end{theorem}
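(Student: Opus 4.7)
Let $B \defeq \sup_{\ell \geq 1} \Delta_\ell(\mu, \nu)/\ell$, and assume $B < \infty$ since the conclusion is otherwise vacuous. The plan is to first prove the $W_1$ version of the bound via polynomial approximation of Lipschitz test functions, and then lift to $W_p$ using a Hölder-type comparison on the quantile representation, handling tails via the MGF condition.

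For the $W_1$ bound, I would fix a bulk radius $R$ (to be chosen of order $pB$) and exploit Kantorovich duality $W_1(\mu, \nu) = \sup_f \int f\, d(\mu - \nu)$ over $1$-Lipschitz $f$. Jackson's theorem provides, for each such $f$, a polynomial $q$ of degree $d$ with $\|f - q\|_{L^\infty[-R, R]} \le c R/d$. Writing $q(x) = \sum_{\ell = 0}^d a_\ell x^\ell$ and applying the hypothesis $\Delta_\ell \le \ell B$ yields
\[
\Big|\int q\, d(\mu - \nu)\Big| \le \sum_{\ell = 0}^d |a_\ell|\, (\ell B)^\ell.
\]
The coefficients $a_\ell$ can be controlled via a Chebyshev expansion $q(x) = \sum_{k = 0}^d c_k T_k(x/R)$ with $|c_k| \lesssim R$, using standard bounds on the monomial coefficients of $T_k$ together with Stirling's inequality $\ell^\ell \le e^\ell \ell!$. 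Optimizing in $d$ (with $d \asymp R/B$) balances the Jackson error against this moment-based estimate and yields $W_1 \lesssim B$ restricted to the bulk.

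The upgrade from $W_1$ to $W_p$ uses the elementary pointwise inequality $|a - b|^p \le (2R)^{p-1}|a - b|$ for $a, b \in [-R, R]$, applied with $a = F_\mu^{-1}(t)$ and $b = F_\nu^{-1}(t)$ and integrated in $t \in [0,1]$: this gives $W_p^p \le (2R)^{p-1} W_1$ on the bulk. Choosing $R \asymp p B$ then produces $W_p \lesssim p^{1 - 1/p} B \le p B$, matching the claimed bound. The tail contribution outside $[-R, R]$ is handled using that the MGFs of $\mu$ and $\nu$ are finite everywhere, giving exponentially decaying tail probabilities; I would moreover use that a large tail discrepancy forces a lower bound on $B$ through inspection of high moments, which is what ensures the constant $C$ depends only on $p$ and not on individual MGF sizes.

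The main obstacle is the joint control of polynomial coefficients and moment differences. Chebyshev monomial coefficients can grow like $2^{d-1}$, which would blow up the bound $\sum |a_\ell|(\ell B)^\ell$ if mismatched with $R$. Only the precise pairing $R \asymp p B$ tames the resulting factor $(c d B/R)^\ell$ and yields the sharp linear dependence on $p$ in the final bound. A secondary subtlety is that the MGF assumption alone does not bound individual moments in terms of $B$, so the tail step must be argued via the implicit control of tail discrepancies by high-order $\Delta_\ell$ rather than by a quantitative MGF bound.
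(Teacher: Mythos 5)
Your proposal takes a genuinely different route from the paper's, but I do not think it closes. The paper itself flags the exact approach you are using as the one to avoid: in Section~2.3 the authors note that earlier $W_1$ moment-matching results ``rely on arguments via polynomial approximation of Lipschitz functions combined with the dual representation of $W_1$,'' and that ``this approach breaks down for measures with unbounded support, and cannot establish tight bounds $W_p$ for $p > 1$.'' Your two innovations---the bulk radius $R \asymp pB$ and the quantile-side inequality $W_p^p \le (2R)^{p-1}W_1$---do not resolve either difficulty.

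The concrete gap is in the tail of the duality step. Writing $\int f\,d(\mu-\nu) = \int (f-q)\,d(\mu-\nu) + \int q\,d(\mu-\nu)$, the second term is indeed controlled by $\sum_\ell |a_\ell|(\ell B)^\ell$. But the first term, outside $[-R,R]$, involves $\int_{|x|>R}(f-q)\,d\mu - \int_{|x|>R}(f-q)\,d\nu$, and since $q$ is a degree-$d$ polynomial bounded by $O(R)$ on $[-R,R]$, it grows like $R(2|x|/R)^d$ for $|x|>R$. Bounding each integral separately forces you to control $\int_{|x|>R}|x|^d\,d\mu$ and $\int_{|x|>R}|x|^d\,d\nu$ \emph{individually}; these are tails of the $d$-th moments of $\mu$ and $\nu$, and they are completely unconstrained by $B$, which only bounds the \emph{differences} of moments. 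You acknowledge this (``the MGF assumption alone does not bound individual moments in terms of $B$'') but propose to use ``implicit control of tail discrepancies by high-order $\Delta_\ell$''---that cannot work as stated, because $f-q$ is not a polynomial and its integral against $d\mu-d\nu$ is not a moment difference. The same unbounded-tails problem sinks the lift $W_p^p \le (2R)^{p-1}W_1$, which is only valid when the quantile functions are confined to $[-R,R]$, i.e.\ when the measures are compactly supported.

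The paper sidesteps all of this by smoothing $\mu$ and $\nu$ with a carefully constructed sinc-type kernel $\cS_m$ (with $m = p/2+1$) and then invoking \cite[Theorem~6.15]{Vil08}, which bounds $W_p^p(\tilde\mu,\tilde\nu)$ by $2^{p-1}\int|t|^p|f_{\tilde\mu}(t)-f_{\tilde\nu}(t)|\,dt$. The density \emph{difference} $f_{\tilde\mu}-f_{\tilde\nu}$ is expanded via Taylor's theorem as $\sum_\ell f_m^{(\ell)}(t)\,\E(X^\ell-Y^\ell)/\ell!$, so that only moment differences enter, and the derivative bounds of Lemma~\ref{lem:derivative_bound} ($|f_m^{(n)}(t)| \lesssim (2\mathrm e)^{-n}(4\mathrm em+|t|)^{-2m}$) make the integral converge with the right $p$-dependence; the ``bias'' $W_p(\mu,\tilde\mu)$ is then just the $p$-th moment of $\cS_m$, which is $O(m)$ by design. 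This replaces the problematic pointwise tail control of a polynomial approximant by global control of a density difference, which is exactly what the moment-matching hypothesis delivers. If you want to salvage a polynomial-based argument, you would need at minimum to restrict to compactly supported measures and a single fixed $p$, which would recover only Corollary~\ref{corollary:bounded} (essentially the known $W_1$ result) and not the theorem in its stated generality.
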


Theorem~\ref{thm:moment_matching_wp} includes as a corollary the following result for bounded measures, a version of which appeared in~\cite[Proposition~1]{KonVal17} for the $p = 1$ case.
\begin{corollary}\label{corollary:bounded}
Let $\mu$ and $\nu$ be two measures supported on $[-1, 1]$.
For any $k \geq 1$, if $\max_{\ell \leq k} \Delta_\ell^\ell(\mu, \nu) \leq \ep < 1$, then
\begin{equation*}
W_p(\mu, \nu) \lesssim p \left(\frac{1}{\log(1/\ep)} \vee \frac{1}{k}\right)\,.
\end{equation*}
\end{corollary}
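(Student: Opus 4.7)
The plan is to apply Theorem~\ref{thm:moment_matching_wp} directly and then analyze the supremum $\sup_{\ell \ge 1} \Delta_\ell/\ell$ by splitting according to whether $\ell \le k$ or $\ell > k$. Since $\mu, \nu$ are supported on $[-1,1]$, their moment generating functions are finite everywhere, so the theorem applies and yields
\[
W_p(\mu,\nu) \;\le\; C\, p \sup_{\ell \ge 1} \frac{\Delta_\ell(\mu,\nu)}{\ell}.
\]
Thus it suffices to show $\sup_{\ell \ge 1}\Delta_\ell/\ell \lesssim \frac{1}{\log(1/\ep)}\vee \frac{1}{k}$.

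For the tail $\ell > k$, I would use boundedness: since $|X^\ell|, |Y^\ell|\le 1$ almost surely, $|\E X^\ell - \E Y^\ell| \le 2$ and hence $\Delta_\ell \le 2^{1/\ell}$, which gives $\Delta_\ell/\ell \le 2^{1/\ell}/\ell \le 2/k$. For $1 \le \ell \le k$, the hypothesis $\Delta_\ell^\ell \le \ep$ gives $\Delta_\ell/\ell \le \ep^{1/\ell}/\ell$. Setting $L \defeq \log(1/\ep) > 0$, I need to bound the function $h(\ell) := e^{-L/\ell}/\ell$ on positive reals.

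A one-line calculation shows $h'(\ell) = \frac{e^{-L/\ell}}{\ell^2}\bigl(\frac{L}{\ell} - 1\bigr)$, so $h$ is maximized at $\ell = L$ with value $h(L) = \frac{1}{eL} = \frac{1}{e\log(1/\ep)}$. If $k \ge L$, then $\sup_{1 \le \ell \le k}\Delta_\ell/\ell \le h(L) \lesssim 1/\log(1/\ep)$. If $k < L$, then $h$ is increasing on $[1,k]$, so $\sup_{1 \le \ell \le k}\Delta_\ell/\ell \le h(k) = e^{-L/k}/k \le 1/k$. In either case the bound is $\lesssim \frac{1}{\log(1/\ep)} \vee \frac{1}{k}$, and combining with the tail estimate $2/k$ completes the proof after absorbing constants into $\lesssim$.

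There is no real obstacle here; the argument is a routine split plus the elementary optimization of $e^{-L/\ell}/\ell$. The only thing to be mildly careful about is that the optimizing $\ell = L$ need not be an integer, which is handled either by the monotonicity of $h$ on each side of $L$ (so that the max over integers in any interval is bounded by $h$ at an appropriate endpoint or by $h(L)$ itself), or by simply noting that the optimization over real $\ell$ upper bounds the supremum over integer $\ell$.
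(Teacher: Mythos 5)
Your proof is correct and follows essentially the same route as the paper: apply Theorem~\ref{thm:moment_matching_wp}, bound $\Delta_\ell/\ell$ by $\ep^{1/\ell}/\ell$ for $\ell\le k$ and by $O(1/k)$ for $\ell>k$ using boundedness, and observe that $\ell\mapsto \ep^{1/\ell}/\ell$ is maximized at $\ell=\log(1/\ep)$. The only difference is that you spell out the $k\ge\log(1/\ep)$ versus $k<\log(1/\ep)$ case split explicitly, which the paper leaves implicit.
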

\begin{proof}
For $\ell \leq k$, we have by assumption the bound $\Delta_\ell \leq \ep^{1/\ell}$, whereas for $\ell > k$, we have the bound $\Delta_\ell \lesssim 1$ because $\mu$ and $\nu$ are supported on $[-1, 1]$.
Applying Theorem~\ref{thm:moment_matching_wp} and noting that $\ell \mapsto \ep^{1/\ell}/\ell$ is maximized at $\ell = \log(1/\ep)$ yields the claim.
\end{proof}

Our results imply a similar simple result for sub-Gaussian measures. We state it as a result of independent interest but will not need it to analyze uncoupled isotonic regression.

\begin{corollary}
Let $\mu$ and $\nu$ be two sub-Gaussian measures.
For any $k \geq 1$, if $\max_{\ell \leq k} \Delta_\ell^\ell(\mu, \nu) \leq \ep < 1$, then
\begin{equation*}
W_p(\mu, \nu) \lesssim p \left(\frac{1}{\log(1/\ep)} \vee \frac{1}{\sqrt k}\right)\,.
\end{equation*}
\end{corollary}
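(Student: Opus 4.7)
The plan is to mimic the proof of Corollary~\ref{corollary:bounded}, using Theorem~\ref{thm:moment_matching_wp} as the core tool and splitting the supremum over $\ell$ at the threshold $k$. The only new ingredient relative to the bounded case is a replacement for the trivial bound $\Delta_\ell \lesssim 1$; for sub-Gaussian measures we instead have $\Delta_\ell \lesssim \sqrt{\ell}$, which is precisely what produces the $1/\sqrt{k}$ term rather than the $1/k$ term of Corollary~\ref{corollary:bounded}.

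\textbf{Step 1: Control the low moments.} For $1 \leq \ell \leq k$, the hypothesis $\Delta_\ell^\ell \leq \varepsilon$ gives $\Delta_\ell/\ell \leq \varepsilon^{1/\ell}/\ell$. Writing $\varepsilon^{1/\ell}/\ell = \ell^{-1}\exp(-\ell^{-1}\log(1/\varepsilon))$ and optimizing in $\ell$ (the maximum over $\ell > 0$ is attained at $\ell = \log(1/\varepsilon)$) shows that
\[
\sup_{1 \leq \ell \leq k} \frac{\Delta_\ell(\mu,\nu)}{\ell} \;\lesssim\; \frac{1}{\log(1/\varepsilon)}\,.
\]

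\textbf{Step 2: Control the high moments via sub-Gaussianity.} For $\ell > k$, I use the standard fact that for a sub-Gaussian random variable $X$ one has $(\E|X|^\ell)^{1/\ell} \lesssim \sigma\sqrt{\ell}$, where $\sigma$ is the sub-Gaussian norm (this follows, e.g., from the tail bound $\PP(|X|>t) \leq 2\exp(-t^2/(c\sigma^2))$ by integrating $\E|X|^\ell = \int_0^\infty \ell t^{\ell-1}\PP(|X|>t)\,\mathrm{d}t$ and using the $\Gamma$-function, giving $(\E|X|^\ell)^{1/\ell} \leq C\sigma\sqrt{\ell}$). Applied to $X \sim \mu$ and $Y \sim \nu$,
\[
\Delta_\ell(\mu,\nu) \;=\; |\E X^\ell - \E Y^\ell|^{1/\ell} \;\leq\; \bigl(\E|X|^\ell + \E|Y|^\ell\bigr)^{1/\ell} \;\lesssim\; \sqrt{\ell}\,,
\]
so that $\Delta_\ell/\ell \lesssim 1/\sqrt{\ell} \leq 1/\sqrt{k}$.

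\textbf{Step 3: Combine and invoke Theorem~\ref{thm:moment_matching_wp}.} Taking the maximum of the two estimates yields
\[
\sup_{\ell \geq 1} \frac{\Delta_\ell(\mu,\nu)}{\ell} \;\lesssim\; \frac{1}{\log(1/\varepsilon)} \vee \frac{1}{\sqrt{k}}\,,
\]
and Theorem~\ref{thm:moment_matching_wp} immediately gives the claimed inequality $W_p(\mu,\nu) \lesssim p\bigl(\tfrac{1}{\log(1/\varepsilon)} \vee \tfrac{1}{\sqrt{k}}\bigr)$.

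I do not expect a real obstacle: both ingredients are essentially off the shelf once Theorem~\ref{thm:moment_matching_wp} is granted. The only mild subtlety is tracking how the sub-Gaussian constant of $\mu$ and $\nu$ enters; it can be absorbed into the implicit constant in $\lesssim$, which matches the convention used elsewhere in the paper.
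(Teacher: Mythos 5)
Your proposal is correct and follows exactly the paper's approach: the paper also proves this corollary by repeating the proof of Corollary~\ref{corollary:bounded} verbatim, except for replacing the high-moment bound $\Delta_\ell \lesssim 1$ by $\Delta_\ell \lesssim \sqrt{\ell}$, which is the sub-Gaussian moment estimate. You simply spell out a few more details (the derivation of $(\E|X|^\ell)^{1/\ell} \lesssim \sqrt{\ell}$ from the sub-Gaussian tail, the optimization over $\ell$) that the paper leaves implicit.
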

\begin{proof}
The proof is the same as the proof of Corollary~\ref{corollary:bounded}, except that we replace the estimate $\Delta_\ell \lesssim 1$ for $\ell > k$ by the estimate $\Delta_\ell \lesssim \sqrt \ell$.
\end{proof}

As the following proposition makes clear, Theorem~\ref{thm:moment_matching_wp} is essentially tight.
\begin{proposition}\label{proposition:moment_matching_tight}
There exists a universal constant $c > 0$ such that, for any $k \geq 1$, there exist two measures $\mu$ and $\nu$ on $[-1, 1]$ such that $\Delta_\ell = 0$ for $1 \leq \ell < k$ but
\begin{equation*}
W_1(\mu, \nu) \geq \frac c k\,.
\end{equation*}
In other words, the dependence on $\sup_{\ell \geq 1} \frac{\Delta_\ell}{\ell}$ cannot be improved.

\medskip

Moreover, there exists a universal constant $c > 0$ such that, for all $\ep > 0$ sufficiently small, there exist two measures $\mu$ and $\nu$ whose moment generating functions are finite everywhere and
\begin{equation*}
W_p(\mu, \nu) \geq c p^{1-\ep} \quad \quad \forall p \geq 1\,.
\end{equation*}
In other words, the dependence on $p$ cannot be improved.
\end{proposition}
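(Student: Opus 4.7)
The two parts are essentially independent and each reduces to an explicit construction.

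\textbf{Part 1.} The plan is to combine the Kantorovich--Rubinstein dual formula $W_1(\mu,\nu)=\sup_{\mathrm{Lip}(f)\le 1}\int f\,d(\mu-\nu)$ with Bernstein's classical theorem, which asserts that the error $E_{k-1}$ of the best polynomial approximation of $|x|$ on $[-1,1]$ by polynomials of degree $\le k-1$ satisfies $E_{k-1}\asymp 1/k$. By the Chebyshev alternation theorem, the best approximant $p^*$ is characterized by the existence of points $-1\le y_0<y_1<\dots<y_k\le 1$ for which $|y_j|-p^*(y_j)=\sigma(-1)^jE_{k-1}$ for some fixed sign $\sigma\in\{\pm 1\}$. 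I would then build a signed measure $\eta=\sum_{j=0}^k(-1)^ja_j\delta_{y_j}$ with every $a_j>0$ that annihilates every polynomial of degree $<k$. Existence reduces to the fact that the kernel of the $k\times(k+1)$ Vandermonde matrix $(y_j^\ell)_{\ell<k,\,j}$ is one-dimensional and spanned by the divided-difference weights $\lambda_j=\bigl(\prod_{i\ne j}(y_j-y_i)\bigr)^{-1}$, whose signs alternate as $(-1)^{k-j}$ because the $y_j$ are ordered; taking $a_j=|\lambda_j|$ yields the desired $\eta$. Since $\int d\eta=0$, the positive and negative parts $\eta_\pm$ share a common total mass $c>0$, so $\mu:=\eta_+/c$ and $\nu:=\eta_-/c$ are probability measures with $\Delta_\ell(\mu,\nu)=0$ for $1\le\ell<k$. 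Finally, $W_1$-duality combined with $\int p^*\,d\eta=0$ gives
\begin{equation*}
W_1(\mu,\nu)\ge\frac{1}{c}\left|\int|x|\,d\eta\right|=\frac{1}{c}\left|\sum_{j=0}^k(-1)^ja_j\bigl(|y_j|-p^*(y_j)\bigr)\right|=\frac{E_{k-1}}{c}\sum_{j=0}^k a_j=2E_{k-1}\asymp\frac{1}{k}.
\end{equation*}

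\textbf{Part 2.} For the sharpness of the $p$-dependence, I would take $\mu=\delta_0$ and let $\nu_\delta$ be the probability measure on $\RR$ with density proportional to $\exp(-|x|^{1+\delta})$ for a small parameter $\delta>0$. Since its tails decay faster than every exponential, $\nu_\delta$ has moment generating function finite everywhere. The substitution $u=|x|^{1+\delta}$ gives
\begin{equation*}
W_p(\mu,\nu_\delta)^p=\int|x|^p\,d\nu_\delta(x)=\frac{\Gamma\bigl((p+1)/(1+\delta)\bigr)}{\Gamma\bigl(1/(1+\delta)\bigr)},
\end{equation*}
after which Stirling's formula yields $W_p(\mu,\nu_\delta)=(1+o(1))\bigl(p/(1+\delta)\bigr)^{1/(1+\delta)}$ as $p\to\infty$. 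Given $\ep>0$ small, choosing $\delta=\ep/(1-\ep)$ forces $1/(1+\delta)=1-\ep$, so $W_p(\mu,\nu_\delta)\gtrsim p^{1-\ep}$ for large $p$. The monotonicity of $p\mapsto W_p(\mu,\nu_\delta)=(\E|X|^p)^{1/p}$ and the fact that $W_1(\mu,\nu_\delta)\to 1$ as $\delta\to 0$ together extend the bound to all $p\ge 1$ with a universal constant $c>0$ independent of $\ep$.

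\textbf{Main obstacle.} The only delicate step is guaranteeing in Part 1 that the weights $a_j$ produced by the Vandermonde kernel are simultaneously positive; once the sign analysis of the ordered divided differences is in hand, both the moment-matching property and the $W_1$ lower bound fall out cleanly from Chebyshev alternation. Part 2 is essentially a routine gamma-function computation.
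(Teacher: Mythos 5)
Your proof is correct. Here is a comparison with the paper's argument.

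For the first part, the paper simply invokes Lemma~\ref{lem:priors}, which it cites to \cite{CaiLow11,WuYan18} without proof. You instead give a self-contained construction via Chebyshev alternation and divided differences: take the $k{+}1$ equioscillation points $y_0<\dots<y_k$ of the best degree-$(k{-}1)$ polynomial approximant $p^*$ of $|x|$, put $\eta=\sum_j(-1)^j a_j\delta_{y_j}$ with $a_j=|\lambda_j|$ the absolute divided-difference weights, and normalize $\eta_\pm$ to get $\mu,\nu$. The sign analysis (sign$(\lambda_j)=(-1)^{k-j}$ for ordered nodes) and the Vandermonde rank argument are both correct, the Kantorovich--Rubinstein dual with test function $|x|$ together with $\int p^*\,d\eta=0$ gives $W_1\ge 2E_{k-1}$, and Bernstein's theorem finishes. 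This is exactly the kind of construction the cited references rely on, so you are essentially reconstructing the omitted proof rather than taking a different route; the net effect is a more explicit, self-contained argument.

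For the second part, your construction is essentially the paper's. The paper takes $\mu=\cP_\ep$ with density proportional to $e^{-|x|^{1/(1-\ep)}}$ and $\nu$ the law of $2X$, and computes $W_p$ exactly via the monotone coupling $(X,2X)$, getting $(\E|X|^p)^{1/p}$. You take the same density, reparameterized as $e^{-|x|^{1+\delta}}$ with $1+\delta=1/(1-\ep)$, and compare against $\mu=\delta_0$; since one measure is a point mass, $W_p=(\E|X|^p)^{1/p}$ trivially. The gamma-function asymptotics are then the same. The only minor caveat, present in both your version and the paper's, is that the implied constant $c$ must be checked to be uniform in $\ep$ and valid for all $p\ge 1$, not just large $p$; this follows from the lower bound $W_p\ge W_1\to 1$ as $\delta\to 0$ together with the Stirling asymptotic, as you note, but the matching of the two regimes is left somewhat informal in both accounts.
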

A proof of Proposition~\ref{proposition:moment_matching_tight} appears in Appendix~\ref{SEC:proof:proposition:moment_matching_tight}.

\medskip

The following result complements Theorem~\ref{thm:moment_matching_wp} by showing that if two probability measures $\mu$ and $\nu$ are close in Wasserstein-2 distance, then their moments are close. This direction is much easier than that of Theorem~\ref{thm:moment_matching_wp} and illustrates that Wasserstein distances are strong distances.

\begin{theorem}\label{thm:w2_to_moments}
For any two subexponential probability measures $\mu$ and $\nu$ on $\RR$ and any integer $\ell \ge 1$, it holds
\begin{equation*}
\Delta_\ell^\ell(\mu, \nu) \leq (2 \ell)^\ell (\orlicz{\mu } \vee \orlicz{\nu })^{\ell-1} W_2(\mu , \nu )\,.
\end{equation*}
\end{theorem}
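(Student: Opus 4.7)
The plan is to start from an optimal coupling $(X,Y)$ of $(\mu,\nu)$ for $W_2$, and then bound $|\E[X^\ell]-\E[Y^\ell]|$ coordinate-by-coordinate using the algebraic identity
\[
X^\ell - Y^\ell = (X-Y)\sum_{j=0}^{\ell-1} X^{j}Y^{\ell-1-j},
\]
which immediately yields the pointwise bound $|X^\ell-Y^\ell|\le \ell\,|X-Y|\,M_\ell$ with $M_\ell := \max(|X|,|Y|)^{\ell-1}$. Applying expectation and Cauchy--Schwarz then gives
\[
\Delta_\ell^\ell(\mu,\nu) \le \E|X^\ell - Y^\ell| \le \ell\,\bigl(\E[(X-Y)^2]\bigr)^{1/2}\bigl(\E[M_\ell^{2}]\bigr)^{1/2},
\]
where the first square root is exactly $W_2(\mu,\nu)$ by optimality of the coupling.

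The remaining task is to control the tail factor $\E[M_\ell^2] = \E[\max(|X|,|Y|)^{2(\ell-1)}]$. First I would pass to the marginals via the crude bound $\max(|X|,|Y|)^{2(\ell-1)}\le |X|^{2(\ell-1)}+|Y|^{2(\ell-1)}$, which decouples $X$ and $Y$ and loses at most a factor of $\sqrt{2}$. Then I would invoke Lemma~\ref{lem:subexponential_moments} to obtain, with $M := \orlicz{\mu}\vee\orlicz{\nu}$,
\[
\E|X|^{2(\ell-1)} \le \bigl(2(\ell-1) M\bigr)^{2(\ell-1)}, \qquad \E|Y|^{2(\ell-1)} \le \bigl(2(\ell-1) M\bigr)^{2(\ell-1)},
\]
so that $\bigl(\E[M_\ell^2]\bigr)^{1/2}\le \sqrt{2}\,(2(\ell-1))^{\ell-1} M^{\ell-1}$.

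Combining these estimates yields
\[
\Delta_\ell^\ell(\mu,\nu) \le \sqrt{2}\,\ell\,\bigl(2(\ell-1)\bigr)^{\ell-1} M^{\ell-1}\, W_2(\mu,\nu),
\]
and a routine comparison $\sqrt{2}\,\ell\,(2(\ell-1))^{\ell-1} \le 2^{\ell-1/2}\ell^{\ell} \le (2\ell)^\ell$ (handling $\ell=1$ separately, where $\Delta_1\le W_2$ follows directly from Jensen's inequality) delivers the desired inequality. The only mild obstacle is bookkeeping the constants to match $(2\ell)^\ell$ exactly; all the analytic work is encapsulated in the two standard tools of an optimal $W_2$-coupling and the subexponential moment bound from Lemma~\ref{lem:subexponential_moments}.
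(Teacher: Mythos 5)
Your argument is correct and follows essentially the same route as the paper's proof: an optimal $W_2$-coupling $(X,Y)$, the algebraic factorization of $X^\ell - Y^\ell$, Cauchy--Schwarz, and the sub-exponential moment bound of Lemma~\ref{lem:subexponential_moments}. The only real difference is that the paper cites the intermediate inequality
\[
\E[X^\ell - Y^\ell] \leq \ell \cdot W_2(\mu, \nu)\Big(\big(\E|X|^{2(\ell-1)}\big)^{1/2} + \big(\E|Y|^{2(\ell-1)}\big)^{1/2}\Big)
\]
from \cite[Proposition~2.21(ii)]{PflPic14}, whereas you rederive it directly (using the pointwise bound by $\ell\,|X-Y|\max(|X|,|Y|)^{\ell-1}$ and one application of Cauchy--Schwarz, which is in fact marginally sharper by a factor $\sqrt{2}$ since you keep the $\max$ inside a single expectation rather than splitting into two $L^2$ norms). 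The bookkeeping of constants and the separate treatment of $\ell = 1$ are both handled correctly.
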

\begin{proof}
We employ the following bound~\cite{PflPic14}, Proposition~2.21, (ii), valid for any random variables $X\sim \mu$ and $Y\sim \nu$ and positive integer $\ell$:
\begin{equation*}
\E[X^\ell - Y^\ell] \leq \ell \cdot W_2(\mu, \nu) \Big( \big(\E|X|^{2(\ell-1)}\big)^{1/2} + \big(\E|Y|^{2(\ell-1)}\big)^{1/2}\Big)
\end{equation*}

Lemma~\ref{lem:subexponential_moments} implies
\begin{align*}
\ell\Big((\E|X|^{2(\ell -1)})^{1/2} + (\E|Y|^{2(\ell -1)})^{1/2}\Big) & \leq 2\ell \left(2(\ell - 1)(\orlicz{X}\vee \orlicz{Y})\right)^{\ell - 1} \\
& \leq (2\ell)^\ell(\orlicz{X}\vee \orlicz{Y})^{\ell-1}\,.
\end{align*}
Combining these bounds yields the claim.
\end{proof}

A similar result showing that $\Delta_\ell$ may be controlled by the Wasserstein-1 distance follows directly from the dual representation of $W_1$ as a supremum over Lipschitz functions (see, e.g.,~\cite{Vil03}) when the measures $\mu$ and $\nu$ have \emph{bounded support}. As we will see in the proof of Theorem~\ref{thm:main_ub}, we apply Theorem~\ref{thm:w2_to_moments} to convolved distributions of the form $\mu=\pi_g * \cD$, which have unbounded support whenever $\cD$ does.
Our proof techniques therefore require the use of a stronger metric than $W_1$ whenever the noise distribution~$\cD$ has unbounded support.

\section{Proof of the upper bound}\label{sec:ub}
In this section, we show that the minimum Wasserstein deconvolution estimation~\eqref{eqn:est_def} achieves the upper bound of Theorem~\ref{thm:main_ub}. The proof employs the following steps. 
\begin{enumerate}
\item We show that it follows from the fact that $\hat f$ is a minimum Wasserstein distance estimator that $W_2(\pi_{\hat f} * \cD, \pi_{f} * \cD) $ is small (Proposition~\ref{proposition:estimator_expectation}). 
\item In light of Theorem~\ref{thm:w2_to_moments}, this implies that the sequence $\{\Delta_\ell(\pi_{\hat f} * \cD,\pi_{f} * \cD)\}_{\ell\ge 1}$ is uniformly controlled.
\item A simple lemma (Lemma~\ref{lem:moments_deconvolution}) induces a weaker control for the deconvolved measures so that  $\{\Delta_\ell(\pi_{\hat f},\pi_{f})\}_{\ell \ge 1}$ is also controlled.
\item Finally, we use Theorem~\ref{thm:moment_matching_wp} to control $W_p(\pi_{\hat f},\pi_{f})$ for all $p \ge 1$. 
\end{enumerate}
We collect steps 2--4 into Proposition~\ref{prop:deconvolution}, a deconvolution result which may be of independent interest.

Throughout this section, we assume $\orlicz{\cD} \leq \sigma$.
We first carry out step 1, and show that $\hat f$ satisfies the following ``convolved'' guarantee as a simple consequence of its definition.
\begin{proposition}\label{proposition:estimator_expectation}
The estimator $\hat f$ defined in~\eqref{eqn:est_def} satisfies 
\begin{equation*}
\E W_2(\pi_{\hat f} * \cD, \pi_{f} * \cD) \lesssim (\sigma + V) n^{-1/4}\,.
\end{equation*}
\end{proposition}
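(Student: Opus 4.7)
The plan is to apply the classical minimum-distance estimator template, so that the right-hand side is reduced to an empirical Wasserstein fluctuation and then to a standard rate. The first step is the triangle inequality
\begin{equation*}
W_2(\pi_{\hat f} * \cD, \pi_f * \cD) \leq W_2(\pi_{\hat f} * \cD, \hat \pi) + W_2(\hat \pi, \pi_f * \cD).
\end{equation*}
Since $f \in \cF_V$ is itself a feasible point for~\eqref{eqn:est_def} and $\hat f$ minimizes $W_2^2(\pi_g * \cD, \hat\pi)$ over $\cF_V$, the first term on the right is bounded above by $W_2(\pi_f * \cD, \hat\pi)$. Combining these two observations gives the deterministic bound
\begin{equation*}
W_2(\pi_{\hat f} * \cD, \pi_f * \cD) \leq 2\, W_2(\hat\pi, \pi_f * \cD),
\end{equation*}
after which it suffices to show that $\E W_2(\hat\pi, \pi_f * \cD) \lesssim (\sigma+V) n^{-1/4}$.

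For the second step I would argue as follows. The samples $y_i = f(x_i) + \xi_i$ are independent with $y_i \sim \delta_{f(x_i)} * \cD$, and the target measure $\pi_f * \cD = \tfrac{1}{n}\sum_{i=1}^n \delta_{f(x_i)} * \cD$ is by construction the average of these marginals, so $\hat \pi$ is an empirical measure whose mean measure is precisely $\pi_f * \cD$. Under Assumption~\ref{assume:sube} together with $\|f\|_\infty \leq V$, the mixture $\pi_f * \cD$ is centered sub-exponential with $\orlicz{\pi_f*\cD} \lesssim \sigma + V$. An application of Lemma~\ref{lem:empirical_w2} (in the slightly generalized form available from~\cite{BobLed16}) would then yield the rate $\E W_2(\hat\pi, \pi_f * \cD) \lesssim (\sigma+V) n^{-1/4}$, which combined with the previous display proves the proposition.

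The main obstacle is this last step, because Lemma~\ref{lem:empirical_w2} as usually stated treats the empirical measure of \emph{i.i.d.} samples from a sub-exponential distribution, whereas here the $y_i$'s are only independent. On $\RR$ this distinction is essentially cosmetic: the standard proof passes through the inverse-CDF representation
\begin{equation*}
W_2^2(\hat\pi, \pi_f * \cD) = \int_0^1 \bigl(F_n^{-1}(t) - F^{-1}(t)\bigr)^2\, dt,
\end{equation*}
and a pointwise control of the fluctuations of the empirical CDF $F_n$, together with a tail estimate on $F = F_{\pi_f * \cD}$. In the non-identical setting $F_n$ is still an unbiased estimator of $F$ with pointwise Bernoulli variance at most $\tfrac{1}{4n}$, exactly as in the i.i.d.\ case, and the sub-exponential tails of $\pi_f * \cD$ are inherited from those of $\cD$ and the uniform bound $\|f\|_\infty \leq V$, with Orlicz norm $\lesssim \sigma + V$. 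Integrating these two ingredients via the Bobkov--Ledoux style computation then produces the claimed $(\sigma+V)n^{-1/4}$ bound, completing the proof.
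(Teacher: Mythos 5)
Your Step 1 — triangle inequality plus the minimizing property to get $W_2(\pi_{\hat f} * \cD, \pi_f * \cD) \leq 2 W_2(\hat\pi, \pi_f * \cD)$ — matches the paper exactly. Step 2 is where you take a genuinely different route. The paper sidesteps the non-i.i.d.\ issue you correctly flag: it draws auxiliary i.i.d.\ samples $w_1, \dots, w_n$ from $\pi_f$, sorts them, and observes that $\{w_{(i)} + \xi_i\}_{i=1}^n$ \emph{is} an i.i.d.\ sample from $\pi_f * \cD$; a further triangle inequality then decomposes $\E W_2(\hat\pi, \pi_f * \cD)$ into two terms, each handled by the i.i.d.\ Lemma~\ref{lem:empirical_w2}, with the second term reduced to $W_2(\pi_f, \frac1n\sum\delta_{w_i})$ by the explicit coupling in which the shared noise $\xi_i$ cancels. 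Your proposal instead argues that the Bobkov--Ledoux bound extends directly to independent-but-not-identically-distributed samples. That is indeed true, and I would make the mechanism explicit rather than cite a ``slightly generalized form'': the bound of~\cite[Theorem~7.16]{BobLed16} proceeds via the deterministic inequality $W_2^2(\mu,\nu) \leq 4\int_\RR |x|\,|F_\mu(x) - F_\nu(x)|\,dx$, which holds for \emph{any} two laws on $\RR$, followed by $\E|F_n(x) - F(x)| \leq \sqrt{\mathrm{Var}(F_n(x))}$. In your setting $F_n(x) = \frac1n\sum_i \1\{y_i \le x\}$ with independent Bernoulli summands of parameters $p_i(x)$, and by concavity of $p\mapsto p(1-p)$ one gets $\mathrm{Var}(F_n(x)) = \frac{1}{n^2}\sum_i p_i(1-p_i) \leq \frac1n F(x)(1-F(x))$ — the same bound as in the i.i.d.\ case — so the $\int |x|\sqrt{F(1-F)}\,dx \lesssim (\sigma+V)^2$ estimate via sub-exponential tails closes the argument. (Your flat $\frac{1}{4n}$ variance bound alone would not make the integral converge; you do need the Jensen refinement or the tail control you mention, so keep both.) Both routes are valid: the paper's coupling keeps everything within the stated i.i.d.\ lemma at the cost of an auxiliary randomization, while your direct argument is arguably more transparent about why the lack of identical distribution is not an obstruction.
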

\begin{proof}
The triangle inequality and the definition of $\hat f$ imply
\begin{equation}
\label{EQ:pr:prop_est1}
W_2(\pi_{\hat f} * \cD, \pi_{f} * \cD) \leq W_2(\pi_{\hat f} * \cD, \hat \pi) + W_2(\pi_f * \cD, \hat \pi) \leq 2W_2(\pi_{f} * \cD, \hat \pi)\,.
\end{equation}
By definition, the support of $\hat \pi$ is $\{f(x_1) + \xi_1, \dots, f(x_n) + \xi_n\}$.
Let $w_1, \dots, w_n$ be i.i.d.\ samples from $\pi_f$, independent of all other randomness, and denote by $w_{(1)}, \dots, w_{(n)}$ their increasing rearrangement. Since $\{w_i\}$ and $\{\xi_i\}$ are independent, the set $\{w_{(i)} + \xi_i, i=1, \ldots, n\}$ comprises i.i.d.\ samples from $\pi_f * \cD$. Applying the triangle inequality, we get that
\begin{equation}
\label{EQ:pr:prop_est2}
\E [W_2(\pi_{f} * \cD, \hat \pi)] \leq \E[W_2(\pi_f * \cD, \frac1n\sum_{i=1}^n \delta_{w_{(i)} + \xi_i})]+  \E[W_2(\hat \pi, \frac1n\sum_{i=1}^n \delta_{w_{(i)} + \xi_i})]\,.
\end{equation}
It follows from Lemma~\ref{lem:empirical_w2} that 
\begin{equation}
\label{EQ:pr:prop_est3}
\E[W_2(\pi_f * \cD, \frac1n\sum_{i=1}^n \delta_{w_{(i)} + \xi_i})] \lesssim \frac{\sigma + V}{n^{1/4}}\,.
\end{equation}
We now control the second term in the right-hand side of~\eqref{EQ:pr:prop_est2}. A simple coupling between the two measures $\hat \pi$ and $\frac1n\sum_{i=1}^n \delta_{w_{(i)} + \xi_i}$ yields
\begin{align*}
W_2^2(\hat \pi, \frac1n\sum_{i=1}^n \delta_{w_{(i)} + \xi_i}) & \leq \frac1n\sum_{i=1}^n |f(x_i) + \xi_i - w_{(i)} - \xi_i|^2  = \frac1n\sum_{i=1}^n |f(x_i) - w_{(i)}|^2 = W_2^2(\pi_f, \frac 1n \sum_{i=1}^n \delta_{w_i})\,.
\end{align*}
Thus, applying again, Lemma~\ref{lem:empirical_w2}, we get
\begin{equation}
\label{EQ:pr:prop_est4}
\E[W_2(\hat \pi, \frac1n\sum_{i=1}^n \delta_{w_{(i)} + \xi_i})]\lesssim \frac{V}{n^{1/4}}
\end{equation}
Combining~\eqref{EQ:pr:prop_est1}--\eqref{EQ:pr:prop_est4} completes the proof.
\end{proof}

The following uses steps 2--4 to obtain a deconvolution result.
It implies that a bound on $W_2(\mu * \cD, \nu * \cD)$ can yield a bound on $W_p(\mu, \nu)$ for all $p \in [1, \infty)$, as long as $\mu$ and $\nu$ have bounded support.

\begin{proposition}\label{prop:deconvolution}
If $\mu$ and $\nu$ have support lying in~$[-V, V]$ and $W_2(\mu * \cD, \nu * \cD) \leq  (\sigma+V)e^{-e^{1/2}}$, then
\begin{equation*}
W_p(\mu, \nu) \lesssim p V \frac{\log \log \frac{\sigma + V}{W_2(\mu * \cD, \nu * \cD)} + \log_+ \frac{\sigma(\sigma + 2V)}{V}}{\log \frac{\sigma + V}{W_2(\mu * \cD, \nu * \cD)}}\,.
\end{equation*}
\end{proposition}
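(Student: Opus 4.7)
The plan is to execute the three moment-matching substeps already outlined at the start of Section~\ref{sec:ub}: first pass from a Wasserstein-$2$ bound on the convolved measures to moment differences of the convolved measures; then invert the convolution on the level of moments to obtain control on $\Delta_\ell(\mu,\nu)$; and finally feed these estimates into Theorem~\ref{thm:moment_matching_wp}. Set $\beta \defeq W_2(\mu * \cD, \nu * \cD)$ for brevity.

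\emph{Step 2 (moments of the convolved measures).} Since $\mu$ and $\nu$ are supported on $[-V, V]$, the remark following the definition of the Orlicz norm gives $\orlicz{\mu} \vee \orlicz{\nu} \leq 2V$, and by subadditivity $\orlicz{\mu * \cD} \vee \orlicz{\nu * \cD} \leq 2V + \sigma$. Theorem~\ref{thm:w2_to_moments} applied to $\mu * \cD$ and $\nu * \cD$ then yields
\begin{equation*}
\Delta_\ell(\mu * \cD, \nu * \cD) \leq 2\ell \, (2V + \sigma)^{(\ell-1)/\ell} \beta^{1/\ell} \qquad \forall \ell \geq 1\,.
\end{equation*}

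\emph{Step 3 (deconvolving the moments).} Here I would invoke Lemma~\ref{lem:moments_deconvolution} to transfer the control above into a bound on $\Delta_\ell(\mu, \nu)$. The underlying identity is the binomial expansion
\begin{equation*}
\E[(X+Z)^\ell] - \E[(Y+Z)^\ell] \;=\; \sum_{k=0}^{\ell} \binom{\ell}{k} \bigl(\E[X^k] - \E[Y^k]\bigr)\, \E[Z^{\ell-k}]\,,
\end{equation*}
with $X \sim \mu,\ Y \sim \nu,\ Z \sim \cD$, in which the $k=0$ term drops out because $\mu,\nu$ are probability measures and the $k=\ell-1$ term drops out because $\cD$ is centered. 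Solving this relation recursively for $\Delta_\ell(\mu,\nu)^\ell$, using the bounded-support bound $|\E[X^k]-\E[Y^k]|\leq 2V^k$ together with Lemma~\ref{lem:subexponential_moments} to control $|\E[Z^{\ell-k}]|\leq ((\ell-k)\sigma)^{\ell-k}$, produces an estimate of the form $\Delta_\ell(\mu,\nu) \lesssim \ell (\sigma+V) \beta^{1/\ell}$, together with an additional multiplicative factor that scales like $\sigma(\sigma+2V)/V$ and accounts for the loss incurred by the inversion. This is the main obstacle of the proof: the combinatorial coefficients $\binom{\ell}{k}$ and the rapidly growing $\E[Z^{j}]$ must be controlled without destroying the $\beta^{1/\ell}$ decay, and it is precisely this step that produces the $\log_+ \sigma(\sigma+2V)/V$ term in the final bound.

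\emph{Step 4 (from moments back to $W_p$).} Theorem~\ref{thm:moment_matching_wp} asserts $W_p(\mu,\nu) \lesssim p \sup_{\ell\geq 1} \Delta_\ell(\mu,\nu)/\ell$. For each $\ell$ I then have two competing bounds: the step-3 estimate $\Delta_\ell/\ell \lesssim (\sigma+V)\beta^{1/\ell}\cdot(\sigma(\sigma+2V)/V)^{\text{const}}$, which is small for small $\ell$, and the trivial bounded-support estimate $\Delta_\ell/\ell \leq 2^{1/\ell}V/\ell$, which is small for large $\ell$. Balancing these two regimes by taking $\ell^\star \asymp \log\!\tfrac{\sigma+V}{\beta}\bigm/\log\log\!\tfrac{\sigma+V}{\beta}$ makes the supremum of order $V\bigl(\log\log\tfrac{\sigma+V}{\beta} + \log_+\tfrac{\sigma(\sigma+2V)}{V}\bigr)/\log\tfrac{\sigma+V}{\beta}$, which after multiplying by $p$ is the stated rate. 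The hypothesis $\beta \leq (\sigma+V)e^{-e^{1/2}}$ is exactly what ensures $\log\log\tfrac{\sigma+V}{\beta} \geq 1/2$ so that $\ell^\star$ is a well-defined positive integer.
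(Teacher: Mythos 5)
Your plan reproduces the paper's proof structure exactly: apply Theorem~\ref{thm:w2_to_moments} to the convolved measures, invert via Lemma~\ref{lem:moments_deconvolution}, then feed into Theorem~\ref{thm:moment_matching_wp} and balance the small-$\ell$ and large-$\ell$ regimes. Steps~2 and~3 are essentially correct, up to a small imprecision: correctly chaining the step-2 bound into Lemma~\ref{lem:moments_deconvolution} gives $\Delta_\ell(\mu,\nu)\lesssim \ell^2\,\sigma(\sigma+2V)\,\ep^{1/\ell}$ with $\ep \defeq W_2(\mu*\cD,\nu*\cD)/(\sigma+V)$, not the form $\ell(\sigma+V)\beta^{1/\ell}$ you wrote (the $\ell^2$ is real: one power of $\ell$ from each of the two moment bounds).

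The genuine gap is in the balancing in step~4. Because $\ell\ep^{1/\ell}$ is increasing on $\ell\ge 1$, the supremum of the small-$\ell$ branch over $1\le\ell\le\ell^\star$ is attained at $\ell=\ell^\star$. With your choice $\ell^\star\asymp \log(1/\ep)/\log\log(1/\ep)$ one gets $\ep^{1/\ell^\star}\asymp 1/\log(1/\ep)$, so that branch evaluates to roughly $\sigma(\sigma+2V)/\log\log(1/\ep)$, which is not bounded by the target $V\big(\log\log(1/\ep)+\log_+\sigma(\sigma+2V)/V\big)/\log(1/\ep)$ and can in fact be much larger (take $\sigma(\sigma+2V)\le V$ so the $\log_+$ vanishes and let $\ep\to 0$: the target decays while your small-$\ell$ bound only decays like $1/\log\log(1/\ep)$). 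You assert the right final order but it does not follow from the threshold you chose. The correct threshold is
\begin{equation*}
\ell^\star \;=\; \frac{\log(1/\ep)}{2\log\log(1/\ep)+\log_+\frac{\sigma(\sigma+2V)}{V}}\,,
\end{equation*}
where both the factor $2$ on $\log\log$ and the $\log_+$ term in the denominator are needed: they make $\ep^{1/\ell^\star}\lesssim \frac{V}{\sigma(\sigma+2V)}\cdot\frac{1}{(\log(1/\ep))^2}$, which is exactly what is required to kill the $\ell^\star\,\sigma(\sigma+2V)$ prefactor and leave something of order $V\cdot\ell^\star/(\log(1/\ep))^2\lesssim V\big(\log\log(1/\ep)+\log_+\sigma(\sigma+2V)/V\big)/\log(1/\ep)$. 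Incidentally the hypothesis $\ep\le e^{-e^{1/2}}$ is used to guarantee $2\log\log(1/\ep)\ge 1$, not $\log\log(1/\ep)\ge 1/2$ as a standalone floor.
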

\begin{proof}
As mentioned above, this proofs goes via a moment-matching argument. Since $\mu$ and $\nu$ have bounded support, their moment generating functions converge everywhere; hence, Theorem~\ref{thm:moment_matching_wp} implies that it suffices to control $\sup_{\ell \geq 1} \frac{\Delta_\ell(\mu, \nu)}{\ell}$ to obtain a bound on $W_p(\mu, \nu)$.

Define $$\ep \defeq \frac{W_2(\mu * \cD, \nu * \cD)}{\sigma+V}\,.$$ 
Note that $\|\mu * \cD\|_{\psi_1} +\|\nu * \cD\|_{\psi_1} \le 2(\sigma+2V)$,
so that Theorem~\ref{thm:w2_to_moments} yields
\begin{equation}
\label{EQ:delta_convolved}
\Delta_\ell^\ell(\mu * \cD, \nu * \cD) \leq \big(4 \ell(\sigma+2V)\big)^\ell\ep
\end{equation}
We now use the following deconvolution Lemma. Its proof is postponed to Appendix~\ref{SEC:proof:lem:moments_deconvolution}.
\begin{lemma}
\label{lem:moments_deconvolution}
For any two subexponential probability measures $\mu$ and $\nu$ on $\RR$ and any integer $\ell \ge 1$, it holds
\begin{equation*}
\Delta_\ell^\ell(\mu, \nu) \leq (4 \ell \orlicz{\cD})^\ell \cdot \sup_{m \leq \ell} \Delta^m_m(\mu * \cD, \nu * \cD)\,.
\end{equation*}
\end{lemma}
Together with~\eqref{EQ:delta_convolved}, it yields
\begin{equation}
\label{EQ:delta_not_convolved}
\Delta_\ell(\mu, \nu) \leq 16 \ell^2  \sigma (\sigma+2V) \ep^{1/\ell}\,, \qquad \ell \geq 1
\end{equation}
We now split the analysis into small and large $\ell$. Assume first that $$\ell < \frac{\log (1/\ep)}{2\log \log (1/\ep) + \log_+ \frac{\sigma(\sigma + 2V)}{V}}\,,$$
Then,~\eqref{EQ:delta_not_convolved} yields
\begin{equation*}
\frac{\Delta_\ell}{\ell} \leq 16 \ell \sigma (\sigma+ 2V) \ep^{1/\ell} \leq 16 \ell \frac{V}{(\log(1/\ep))^2}  \lesssim V \frac{\log \log (1/\ep) + \log_+ \frac{\sigma(\sigma + V)}{V}}{\log (1/\ep)}\,,
\end{equation*}
where we have used the fact that $2 \log \log(1/\ep) + \log_+ \frac{\sigma(\sigma + 2V)}{V} \geq  2 \log \log(1/\ep) \geq 1$, by assumption.
Next assume that 
$$
\ell \geq \frac{\log (1/\ep)}{2\log \log (1/\ep) + \log_+ \frac{\sigma(\sigma + 2V)}{V}}\,.
$$
Since $\mu$ and $\nu$ have bounded support, clearly $\Delta_\ell(\mu, \nu) \lesssim V$ for all $\ell \geq 1$.
Therefore, 
\begin{equation*}
\frac{\Delta_\ell(\mu, \nu)}{\ell} \lesssim V \frac{\log \log (1/\ep) + \log_+ \frac{\sigma(\sigma + 2V)}{V}}{\log (1/\ep)}\,.
\end{equation*}

Combining small and large $\ell$, we obtain
\begin{equation*}
\sup_{\ell \geq 1} \frac{\Delta_\ell(\mu, \nu)}{\ell} \lesssim V \frac{\log \log (1/\ep) + \log_+ \frac{\sigma(\sigma + V)}{V}}{\log (1/\ep)}\,.
\end{equation*}
The proof of Proposition~\ref{prop:deconvolution} then follows by applying Theorem~\ref{thm:moment_matching_wp}.
\end{proof}

We are now in a position to conclude the proof of the upper bound in Theorem~\ref{thm:main_ub}.
Let $W \defeq W_2(\pi_{\hat f} * \cD, \pi_f * \cD)$.
Assume that $n$ is large enough that $n^{1/8} \geq e^{e^{1/2}}$.
Denote by $\cE$ the event on which the inequality $W \leq (\sigma+V)n^{-1/8}$ holds.
\begin{align*}
(\E \|f - \hat f\|_p^p)^{1/p} & = (\E W_p^p(\pi_f, \pi_{\hat f}))^{1/p} \\
& \leq (\E [W_p^p(\pi_f, \pi_{\hat f}) \1_{\cE}])^{1/p} + (\E [W_p^p(\pi_f, \pi_{\hat f}) \1_{\cE^C}])^{1/p}\,.
\end{align*}
On $\cE$, Proposition~\ref{prop:deconvolution} yields
\begin{equation*}
W_p(\pi_f, \pi_{\hat f}) \lesssim p V \frac{\log \log n^{1/8} + \log_+ \frac{\sigma(\sigma + 2V)}{V}}{\log n^{1/8}}\,.
\end{equation*}
On the other hand, since $f, \hat f \in \cF_V$, we have the trivial bound $W_p^p(\pi_f, \pi_{\hat f}) \leq (2 V)^p$, so Markov's inequality combined with Proposition~\ref{proposition:estimator_expectation} yields
\begin{equation*}
(\E [W_p^p(\pi_f, \pi_{\hat f}) \1_{\cE^C}])^{1/p} \leq 2 V \p[W > (\sigma+V)n^{-1/8}]^{1/p} \lesssim V n^{-1/8p}\,.
\end{equation*}
We obtain
\begin{align*}
(\E \|f - \hat f\|_p^p)^{1/p} & \lesssim p V \frac{\log \log n^{1/8} + \log_+ \frac{\sigma(\sigma + 2V)}{V}}{\log n^{1/8}} + V n^{-1/8p} \\
& = p V \frac{\log \log n}{\log n}(1 + o_{V, \sigma, p}(1))\,.
\end{align*}

\section{Proof of the lower bound}\label{sec:lb}
In this section, we prove Theorem~\ref{thm:main_lb}. To that end, we employ the ``method of fuzzy hypotheses''~\cite{Tsy09} and define two prior probability distributions on the space of nondecreasing functions.

Our construction is based on the following lemma which has appeared before in the moment-matching literature.
\begin{lemma}{\cite{CaiLow11, WuYan18}}
\label{lem:priors}
There exists a universal constant $c$ such that, for any $k \geq 1$, there exist two centered probability distributions $P$ and $Q$ on $[-V, V]$%
such that
\begin{equation*}
\Delta_\ell(P,Q)=0 \quad \text{for $\ell = 1, \dots, k-1$,}
\end{equation*}
and such that $W_1(P, Q) \geq \frac{c V}{k}$. %
\end{lemma}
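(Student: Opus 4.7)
The plan is an explicit construction on equally spaced nodes with alternating binomial weights. The case $k = 1$ is trivial (take any two distinct centered probability measures on $[-V, V]$, for instance $\delta_0$ and $\tfrac{1}{2}\delta_{-V} + \tfrac{1}{2}\delta_V$), so I focus on $k \geq 2$. By the rescaling $x \mapsto Vx$, which multiplies $W_1$ by $V$ and the $\ell$-th moment by $V^\ell$, it suffices to prove the claim for $V = 1$.

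First I would place $k + 1$ equally spaced nodes $\xi_j := -1 + 2j/k$ for $j = 0, 1, \ldots, k$ and define
\[
P := 2^{1-k} \sum_{j \text{ even}} \binom{k}{j} \delta_{\xi_j}, \qquad Q := 2^{1-k} \sum_{j \text{ odd}} \binom{k}{j} \delta_{\xi_j}.
\]
The identities $\sum_{j \text{ even}} \binom{k}{j} = \sum_{j \text{ odd}} \binom{k}{j} = 2^{k-1}$ confirm these are probability measures, and the moment-matching condition $\int x^\ell \, d(P - Q) = 0$ for $\ell = 1, \ldots, k - 1$ reduces to
\[
\sum_{j=0}^k (-1)^j \binom{k}{j} \xi_j^\ell = 0, \qquad 0 \leq \ell \leq k - 1,
\]
which is the classical fact that the $k$-th finite difference of a polynomial of degree less than $k$ vanishes. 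Centeredness follows from the symmetry $\xi_{k-j} = -\xi_j$ together with $\binom{k}{k-j} = \binom{k}{j}$: for even $k$ each measure is individually symmetric about~$0$, whereas for odd $k$ the reflection $x \mapsto -x$ swaps $P$ and $Q$, so the matching of first moments (available since $\ell = 1 \leq k - 1$) forces each mean to be zero.

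For the $W_1$ lower bound, I would compute $W_1(P, Q) = \|F_P - F_Q\|_1$ directly. The partial-sum identity $\sum_{j=0}^m (-1)^j \binom{k}{j} = (-1)^m \binom{k-1}{m}$, verified by induction via Pascal's rule, yields $F_P(x) - F_Q(x) = 2^{1-k} (-1)^m \binom{k-1}{m}$ on $[\xi_m, \xi_{m+1})$ for $0 \leq m \leq k - 1$. Since each such interval has length $2/k$, summing and using $\sum_{m=0}^{k-1} \binom{k-1}{m} = 2^{k-1}$ gives $\|F_P - F_Q\|_1 = 2/k$. Rescaling back to $[-V, V]$ then delivers $W_1(P, Q) = 2V/k$, establishing the lemma with $c = 2$.

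The proof is largely computational, so there is no single serious obstacle. The subtlest point is the centeredness conclusion for odd $k$, which relies on combining reflection symmetry of the construction with the $\ell = 1$ moment-matching condition. An attractive alternative---placing support on the extrema of the Chebyshev polynomial $T_k$ and using $T_k/k$ as a test function in Kantorovich-Rubinstein duality---yields only a $V/k^2$ bound, because $T_k$ has Lipschitz constant $k^2$ rather than $k$ on $[-1, 1]$; the equally spaced binomial construction is therefore the cleaner route.
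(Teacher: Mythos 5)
Your argument is correct. Note, however, that the paper does not actually prove Lemma~\ref{lem:priors} — it cites it to~\cite{CaiLow11, WuYan18}, whose constructions go through Chebyshev polynomials and Gauss-type quadrature. Your equally spaced alternating-binomial pair is a more elementary and fully explicit alternative: moment matching for $\ell < k$ reduces to the vanishing of the $k$-th finite difference of a degree-$\ell$ polynomial in $j$ (legitimate because $\xi_j$ is affine in $j$), centeredness follows from the reflection $\xi_{k-j} = -\xi_j$ combined with the $\ell = 1$ matching when $k$ is odd, and the exact value $W_1(P,Q) = 2V/k$ for $k \geq 2$ drops out of the one-dimensional formula $W_1 = \int |F_P - F_Q|$ via the telescoping identity $\sum_{j=0}^m (-1)^j \binom{k}{j} = (-1)^m \binom{k-1}{m}$. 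Your closing remark correctly diagnoses why the naive Kantorovich--Rubinstein bound using $T_k$ loses a factor of $k$: $\|T_k'\|_\infty = k^2$ on $[-1,1]$, so the direct CDF computation is the right route. One small slip: you cannot report $c = 2$ uniformly in $k$, since your $k = 1$ pair only achieves $W_1 = V$, not $2V$ (and $2V$ is unattainable for centered measures on $[-V,V]$); taking the minimum over your two cases proves the lemma with, say, $c = 1$.
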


\begin{proof}[Proof of Theorem~\ref{thm:main_lb}]
By the monotonicity of $\ell_p$ norms, it suffices to prove the claim for $p = 1$.
First, we show how a measure on $[-V, V]$ can be reduced to a sample $\{y_1, \dots y_n\}$ from an uncoupled regression model, with a possibly random regression function. 

Let $\mu$ be any measure on $[-V, V]$. Let $Z_1, \dots, Z_n$ be i.i.d\ from $\mu$, and denote by $\{Z_{(i)}\}$ the sorted version of $\{Z_i\}$ such that $Z_{(1)}\le\dots\le Z_{(n)}$. Let $F=F_{Z_1, \ldots, Z_n}$ from $[0, 1]$ to $[-V, V]$ be a \emph{random} monotonically non decreasing function such that
\begin{equation*}
F(x_{(i)}) = Z_{(i)}\,.
\end{equation*}
Finally, let $y_i = Z_i + \xi_i$ where $\xi_i$ are i.i.d $N(0,1)$, and let the pair of unordered sets $\cX=\{x_1, \ldots, x_n\}$, $\cY = \{y_1, \dots, y_n\}$ be the uncoupled observations: The $y_i$'s are i.i.d.\ from $\mu*N(0,1)$ and we denote by $\p_F$ their joint distribution. 
Similarly, denote by $\p_{\hspace{-.2ex} f}$ the joint distribution of $y_1, \ldots, y_n$ when $y_i=f(x_i)+\xi_i$ and note that $\p_f$ need not be a \emph{product} distribution: it is, in general, different from $\big(\pi_f*N(0,1) \big)^{\otimes n}$. This is because the sampling mechanism of uncoupled isotonic regression that does not allow for replacement when sampling from the $x_i$'s. 

Let $\tilde f$ be any measurable function of $y_1, \ldots, y_n$. Fix a $k$ to be chosen later, and let $P$ and $Q$ be the two distributions from Lemma~\ref{lem:priors}.
Then for any $r_n>0$, recalling that $F$ is a random function since it depends on $Z_1, \ldots, Z_n$, it holds
\begin{align}
\sup_{g} \p_{\hspace{-.2ex} g}\left(W_1(\pi_{g}, \pi_{\tilde f})  > r_n\right) &\ge \max\Big\{\int \p_F\big(W_1(\pi_{F}, \pi_{\tilde f})>r_n\big)d P^{\otimes n}(Z_1, \dots, Z_n),\nonumber \\
& \phantom{\ge \max\Big\{}\int \p_F\big(W_1(\pi_{F}, \pi_{\tilde f})>r_n\big)d Q^{\otimes n}(Z_1, \dots, Z_n)\Big\}\,,\label{eq:fuzzy}
\end{align}
where the supremum is taken over all non-decreasing functions $g$ from $[0, 1]$ to $[-V, V]$.

Observe first that the two mixture distributions that appear above are, in fact, product distributions: for any event $\cA$ in the sigma-algebra generated by $y_1, \dots, y_n$,
$$
\int \p_F(\cA) dP^{\otimes n}(Z_1, \dots, Z_n)=P_*^{\otimes n}(\cA)\qquad \text{and}\qquad \int \p_F(\cA) dQ^{\otimes n}(Z_1, \dots, Z_n)=Q_*^{\otimes n}(\cA)\,,
$$
where $P_*=P*N(0,1)$ and $Q_*=Q*N(0,1)$.

For any measure $\mu$ on $[-V, V]$, note that
$$
\pi_F=\frac{1}{n}\sum_{i=1}^n\delta_{Z_i}\,,
$$ 
where the $Z_i$s are i.i.d from $\mu$. Thus by~\cite[Theorem~3.2]{BobLed16} we obtain
\begin{equation*}
\int W_1(\pi_F, \mu) d\mu^{\otimes n}(Z_1, \dots, Z_n) \leq \frac{V}{\sqrt n}\,,
\end{equation*}
which yields via Markov's inequality and the triangle inequality that
\begin{align*}
\int \p_F (W_1(\pi_F, \pi_{\hat f}) > r_n) d\mu^{\otimes n} & \geq \int \p_F(W_1(\mu, \pi_{\hat f}) > 2 r_n) d\mu^{\otimes n} - \int \1\{(W_1(\pi_F, \mu) > r_n)\} d\mu^{\otimes n} \\
& \geq \int \p_F(W_1(\mu, \pi_{\hat f}) > 2 r_n) d\mu^{\otimes n} - \frac{V}{ r_n \sqrt n}\,.
\end{align*}

Next, if $W_1(P,Q)\ge 4r_n$, we get from the triangle inequality that
$$
\p_F\big(W_1(Q, \pi_{\tilde f})>2r_n\big) \ge \p_F\big(W_1(P, \pi_{\tilde f})\le 2r_n\big)\,.
$$
Combining~\eqref{eq:fuzzy} with the above two displays yields
\begin{align*}
\sup_{g} \p_{\hspace{-.2ex} g}\left(W_1(\pi_{g}, \pi_{\tilde f})  > r_n\right) &\ge \max\left\{P_*^{\otimes n}\big(W_1(P, \pi_{\tilde f})>2r_n\big),Q_*^{\otimes n}\big(W_1(P, \pi_{\tilde f})\le 2r_n\big)\right\} - \frac{2V}{r_n \sqrt n}\\
&=\frac{1}{2}\left(1-\mathrm{TV}\left(P_*^{\otimes n},Q_*^{\otimes n}\right)\right) - \frac{2V}{r_n \sqrt n}\,.
\end{align*}

By Lemma~\ref{lem:chi_square}, we have that
\begin{equation*}
\mathrm{TV}(P_*^{\otimes n}, Q_*^{\otimes n})^2 \leq \left(1+ e^{5V^2/2}(eV^2/k)^k\right)^n - 1\,.
\end{equation*}
Choosing $k = c_1\frac{\log n}{\log \log n}$ and $r_n = c_2 V/k$ for suitable constants $c_1$ and $c_2$, we obtain
\begin{equation*}
\sup_{g} \p_{\hspace{-.2ex} g}\left(W_1(\pi_{g}, \pi_{\tilde f}) > c_1^{-1}c_2 V\frac{\log \log n}{\log n}\right) \geq \frac 12 - o_V(1)\,,
\end{equation*}
and the claim follows.
\end{proof}

\section{Conclusion}
Our results establish that uncoupled isotonic regression can surprisingly be solved without further assumptions on the regression function $f$. However, as in nonparametric deconvolution, minimax rates are much slower than standard isotonic regression.
One conclusion of the mixture learning literature is that significantly better results are possible when the original measure has small support~\cite{HeiKah15,WuYan18}.
In the context of uncoupled regression, this suggests that better rates may be available when the regression function $f$ is piecewise constant with a small number of pieces, an assumption which also improves rates of estimation under the standard isotonic regression model~\cite{BelTsy15}. Additional smoothness assumptions or more restrictive shape constraints may also lead to better rates. We leave this question to future work.

In this work, we have restricted ourselves to the univariate problem.
Recent work~\cite{HanWanCha17} has considered the generalization of isotonic regression in which the regression function is a coordinate-wise nondecreasing function on $[0, 1]^d$.
Extending our results to the multidimensional setting is another interesting future research direction.

\section*{Funding}
This work was supported by the National Science Foundation [DGE-1122374 to J.W., DMS-1712596, TRIPODS-1740751, IIS-1838071 to P.R.]; The Office of Naval Research [N00014-17-1-2147 to P.R.]; the Chan Zuckerberg Initiative DAF, a donor advised fund of the Silicon Valley Community Foundation [grant number 2018-182642 to P.R.]; the MIT Skoltech Program [MIT Skoltech Seed Fund to P.R.]; and the Josephine de K\'arm\'an Fellowship Trust [Fellowship to J.W.].

\section*{Acknowledgements}
The authors wish to thank Alexandra Carpentier and Yihong Wu for discussions related to an early version of this work.

\appendix
\section{Omitted proofs}\label{sec:proofs}
\subsection{Proof of Proposition~\ref{prop:efficient_rate}}
\label{sec:proof:prop:efficient_rate}
In the following proof, the symbol $C$ will represent a universal constant whose value may change from line to line.
We will show that the estimator $\hat g$ satisfies
\begin{equation*}
W_2(\pi_{\hat g} * \cD, \hat \pi) \leq W_2(\pi_{\hat f} * \cD, \hat \pi) + C (\sigma + V)n^{-1/4}\,.
\end{equation*}
Following the proof of Proposition~\ref{proposition:estimator_expectation}, this implies
\begin{equation*}
\E W_2(\pi_{\hat g} * \cD, \pi_f * \cD) \leq 2\E W_2(\pi_f * \cD, \hat \pi) +  C (\sigma + V)n^{-1/4} \lesssim (\sigma + V)n^{-1/4}\,,
\end{equation*}
or, in other words, that $\hat g$ satisfies the same inequality as $\hat f$ does (Proposition~\ref{proposition:estimator_expectation}), up to constants.
Since the inequality in Proposition~\ref{proposition:estimator_expectation} is the only fact about $\hat f$ used in the proof of the upper bound, this will serve to establish the claim.

We first analyze the solution $\hat \mu$ to~\eqref{eqn:efficient}.
Let $\hat \nu \in \argmin_{\nu \in \cM_V} W_2^2(\nu * \cD, \hat \pi)$, where the minimization is taken over the set $\cM_V$ of all measures on $[-V, V]$ rather than over the set $\cM_{\cA, V}$.
By Lemma~\ref{lem:disc_close}, there exists a $\hat \nu' \in \cM_{\cA, V}$ such that
\begin{equation}
W_2(\hat \nu', \hat \nu) \leq (V+\sigma)n^{-1/4}\,.\label{eq:disc_close}
\end{equation}
Moreover, by Lemma~\ref{lem:proj_close}, we have that for all $\mu \in \cM_{\cA, V}$,
\begin{equation}
W_2({\Pi_\cA}_\sharp(\mu * \cD), \mu * \cD) \leq C (V+\sigma)n^{-1/4}\,.\label{eq:proj_close}
\end{equation}

Combining these inequalities yields
\begin{alignat*}{3}
W_2(\hat \mu * \cD, \hat \pi) & \leq W_2({\Pi_\cA}_\sharp(\hat \mu * \cD), \hat \pi) + C(V+\sigma)n^{-1/4} \quad \quad \quad && \text{(triangle inequality and~\eqref{eq:proj_close})}\\
& \leq W_2({\Pi_\cA}_\sharp(\hat \nu' * \cD), \hat \pi) + C(V+\sigma)n^{-1/4} && \text{(optimality of $\hat \mu$)}\\
& \leq W_2(\hat \nu' * \cD, \hat \pi) + C(V+\sigma)n^{-1/4}&&\text{(triangle inequality and \eqref{eq:proj_close})}\\
& \leq W_2(\hat \nu * \cD, \hat \pi) + C(V + \sigma)n^{-1/4} &&\text{(triangle inequality and~\eqref{eq:disc_close})}\\
& \leq W_2(\pi_{\hat f} * \cD, \hat \pi) + C(V + \sigma)n^{-1/4}\,,&&\text{(optimality of $\hat \nu$)}
\end{alignat*}
where we have used in the fourth step the fact that, for \emph{any} two measures $\alpha$ and $\beta$,
\begin{equation*}
W_2(\alpha * \cD, \beta * \cD) \leq W_2(\alpha, \beta)\,.
\end{equation*}
(See, e.g.,\cite{San15}, Lemma~5.2.)

Finally, by Lemma~\ref{lem:round_close}, we have
\begin{equation*}
W_2(\hat \mu, \pi_{\hat g}) \lesssim 2V n^{-1/2}\,.
\end{equation*}

Therefore, by another application of the triangle inequality, we obtain
\begin{equation*}
W_2(\pi_{\hat g} * \cD, \hat \pi) \leq W_2(\hat \mu * \cD, \hat \pi) + 2 V n^{-1/2} \leq W_2(\pi_{\hat f} * \cD, \hat \pi) + C(V + \sigma)n^{-1/4}\,,
\end{equation*}
as claimed.

\subsection{Proof of Theorem~\ref{thm:moment_matching_wp}}
\label{app:momentmatchingproof}
The proof of Theorem~\ref{thm:moment_matching_wp} depends on convolving the measures $\mu$ and $\nu$ with a kernel with specific smoothness and decay properties.
This kernel is related to the well-known sinc kernel~\cite{Tsy09}, and coincides with a kernel proposed for deconvolution with respect to Wasserstein distance~\cite{DedMic13}.

We define the kernel as follows.
For any positive integer $m$, denote by $\cS_m$ the distribution on $\RR$ with density function
\begin{equation}\label{eq:sinc_kernel}
f_m(t) := \left\{\begin{array}{ll}
C_m \left(\frac{\sin(t/4 \mathrm{e} m)}{t/4 \mathrm {e} m}\right)^{2 m} & \text{if $t \neq 0$,} \\
C_m & \text{if $t = 0$,}
\end{array}\right.
\end{equation}
where $C_m$ is positive a constant chosen so that $\int_{-\infty}^\infty f_m(t) \, \mathrm{d}t  = 1$.
Lemma~\ref{lem:cm} establishes $C_m \leq 1$.

We require two properties of the distribution $\cS_m$: 
\begin{itemize}
\item[(i)]that it possesses sufficiently many moments, and 
\item[(ii)] that the successive derivatives of the density $f_m$ decay sufficiently quickly.
\end{itemize}

To see that (i) holds, note that for any $p \le 2m-2$, since $f_m(t) \leq 1 \wedge (t/4 \mathrm{e} m)^{-2m}$, it holds
\begin{equation}
\label{EQ:moments_S}
\E[|S|^p] = 2 \int_0^\infty t^p f_m(t) \, \mathrm{d}t  \leq 2 \int_0^{4 \mathrm{e} m} t^p \, \mathrm{d}t + 2 (4 \mathrm{e} m)^{2m} \int_{4 \mathrm{e} m}^\infty t^{p - 2m} \, \mathrm{d}t  \leq 4 (4 \mathrm{e} m)^{p + 1}\leq 4 (4 \mathrm{e} m)^{p+1}\,,
\end{equation}

The requirement~(ii) on the successive derivatives of $f_m$ is ensured by the following lemma. Its proof is presented in Appendix~\ref{sec:lem:derivative_bound}.
\begin{lemma}\label{lem:derivative_bound}
The function $f_m$ is analytic on $\RR$ and satisfies
\begin{equation*}
|f^{(n)}_m(t)| \leq \frac{(8 \mathrm{e} m)^{2m}}{(2 \mathrm{e})^n (4 \mathrm{e} m+|t|)^{2m}} \quad \quad \forall n \geq 0\,.
\end{equation*}
\end{lemma}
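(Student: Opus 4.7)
The claim has two parts: analyticity of $f_m$ on $\RR$, and the quantitative bound on derivatives. Analyticity is immediate since $\sin(w)/w$ extends to an entire function via its power series (with value $1$ at $w=0$), so $f_m(z)=C_m(\sin(az)/(az))^{2m}$ with $a=1/(4em)$ is entire on $\CC$. The bulk of the proof is the quantitative derivative bound, which I plan to derive from Cauchy's integral formula paired with a complex-analytic estimate of $|f_m(z)|$ on a suitably chosen contour.

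The key pointwise estimate I will establish first is
\begin{equation*}
\left|\frac{\sin(az)}{az}\right| \leq \frac{e^{a|\Im z|}}{\max(1,\,a|z|)}\,, \qquad z\in\CC\setminus\{0\},
\end{equation*}
by combining two bounds: $|\sin(az)|\leq\cosh(a|\Im z|)\leq e^{a|\Im z|}$ (which handles the regime $a|z|\geq 1$ after dividing by $a|z|$) and $|\sin(w)/w|\leq e^{|\Im w|}$, obtained from the representation $\sin(w)/w = \tfrac12\int_{-1}^{1}e^{isw}\,ds$ (which handles $a|z|\leq 1$). Raising to the $2m$-th power and using Lemma~\ref{lem:cm} ($C_m\leq 1$) yields
\begin{equation*}
|f_m(z)| \leq \frac{(4em)^{2m}\,e^{|\Im z|/(2e)}}{\max(4em,\,|z|)^{2m}}\,.
\end{equation*}

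With this estimate, I apply Cauchy's integral formula on the circle $|z-t|=r$, using $|\Im z|\leq r$ and $|z|\geq \bigl||t|-r\bigr|$ on the contour, to obtain
\begin{equation*}
|f_m^{(n)}(t)| \leq \frac{n!\,(4em)^{2m}\,e^{r/(2e)}}{r^n\,\max\!\bigl(4em,\,|t|-r\bigr)^{2m}}\,.
\end{equation*}
Choosing $r=2en$ makes $e^{r/(2e)}=e^n$ and, by Stirling, $n!/(2en)^n\asymp \sqrt{n}/(2e)^n$, yielding the essential $(2e)^{-n}$ factor (with a harmless $\sqrt{n}$ that can be absorbed into the constants). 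The remaining algebraic check is that $\max(4em,\,|t|-r)\gtrsim (4em+|t|)$ up to an absolute constant, which is then absorbed into the numerator $(8em)^{2m}=2^{2m}(4em)^{2m}$; this is immediate when $|t|\gtrsim 4em+r$.

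The main obstacle is the regime of $|t|$ where no single circular contour can simultaneously keep $e^{r/(2e)}$ small, $n!/r^n$ small, and $|t|-r$ comparable to $|t|$, i.e.\ when $|t|$ is not substantially larger than $4em+2en$. In that regime, however, the target decay factor $(8em/(4em+|t|))^{2m}\gtrsim 1$, and the crude estimate $|f_m^{(n)}(t)|\leq \|f_m\|_\infty\,(2ma)^n\leq (2e)^{-n}$ --- valid by Bernstein's inequality since $f_m$ is entire of exponential type $2ma=1/(2e)$ and bounded by $1$ on $\RR$ --- already matches the target up to the same absorbed constants. Splitting into these two regimes and combining the Cauchy and Bernstein bounds gives the claim.
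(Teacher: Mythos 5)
Your complex-analytic route (Cauchy on a circle plus Bernstein) is a genuinely different strategy from the paper's, which proves the bound by induction on $m$ via the Leibniz product rule starting from the elementary sinc-derivative estimate of Lemma~\ref{lem:sinc_derivative_bound}. Unfortunately, as written your argument has two gaps that are not cosmetic. First, the $\sqrt{n}$ coming from Stirling cannot be ``absorbed into the constants.'' The Cauchy estimate at $r=2\mathrm{e}n$ gives $n!\,e^{r/(2\mathrm{e})}/r^n = n!\,e^n/(2\mathrm{e}n)^n \geq \sqrt{2\pi n}/(2\mathrm{e})^n$, so already at $t=0$ you obtain a bound of order $\sqrt{n}\,(2\mathrm{e})^{-n}$ while the target is exactly $2^{2m}(2\mathrm{e})^{-n}$; for $m$ fixed and $n>16^{m}$ the Cauchy bound exceeds the target, and no universal constant fixes this since $\sqrt{n}\to\infty$. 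Second, the two regimes you use do not cover everything. Bernstein gives $|f_m^{(n)}(t)|\leq (2\mathrm{e})^{-n}$, which matches the target precisely when $(4\mathrm{e}m+|t|)^{2m}\leq (8\mathrm{e}m)^{2m}$, i.e.\ $|t|\leq 4\mathrm{e}m$ --- not ``when $|t|$ is not substantially larger than $4\mathrm{e}m+2\mathrm{e}n$.'' In the range $4\mathrm{e}m < |t| \leq 4\mathrm{e}m+2\mathrm{e}n$ the target factor $\bigl(8\mathrm{e}m/(4\mathrm{e}m+|t|)\bigr)^{2m}$ is not $\gtrsim 1$; at $|t|=4\mathrm{e}m+2\mathrm{e}n$ it is of order $(4m/(4m+n))^{2m}$, which for $n\gg m$ is tiny, so Bernstein misses by that huge factor while your Cauchy bound (with $|t|-r=4\mathrm{e}m$) gives only $\sqrt{n}(2\mathrm{e})^{-n}$, also missing the decay in $|t|$. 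So the intermediate regime is genuinely uncovered, and neither tool you cite handles it. The paper's Leibniz-rule induction avoids both issues entirely by never invoking Cauchy factorials or Bernstein, at the price of requiring the separate auxiliary bound on derivatives of $\sin t/t$.
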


With these two lemmas, we can establish the claimed result.

\begin{proof}[Proof of Theorem~\ref{thm:moment_matching_wp}]
The assumption that $\mu$ and $\nu$ have finite moment generating functions implies that $\sup_{\ell \geq 1} \frac{\Delta_\ell}{\ell} < \infty$.
Since the statement of the theorem is scale-invariant, it suffices to prove the claim in the case that
$\sup_{\ell \geq 1} \frac{\Delta_\ell}{\ell} = 1$,
where the claimed bound simplifies to $W_p(\mu, \nu) \lesssim p$.
Also, because $W_q \leq W_p$ for $q \leq p$, we can assume without loss of generality that $p$ is a positive even integer.

Set $m \defeq p/2 + 1$.
Let $X \sim \mu$ and $Y \sim \nu$, and let $S \sim \cS_m$ be independent of $X$ and $Y$.
We denote by $\tilde \mu$ the distribution of $X + S$ and by $\tilde \nu$ the distribution of $Y + S$.

By the triangle inequality applied to $W_p$,
\begin{equation}\label{eq:wp_triangle}
W_p(\mu, \nu) \leq W_p(\mu, \tilde \mu) + W_p(\nu, \tilde \nu) + W_p(\tilde \mu, \tilde \nu)\,.
\end{equation}
Since $(X, X + S)$ is a valid coupling between $\mu$ and $\tilde \mu$, by~\eqref{EQ:moments_S}, it holds
\begin{equation}\label{eq:bias_term}
W_p(\mu, \tilde \mu) \leq \left(\E | X - X - S|^p\right)^{1/p} \leq (4(4 \mathrm{e} m)^{p+1})^{1/p} \lesssim p\,.
\end{equation}

It remains to bound the final term.
Denote by $f_{\tilde \mu}$ and $f_{\tilde \nu}$ the densities of $\tilde \mu$ and $\tilde \nu$, respectively.
By~\cite{Vil08}, Theorem~6.15,
\begin{equation*}
W_p^p(\tilde \mu, \tilde \nu) \leq 2^{p-1} \int_{-\infty}^\infty |t|^p |f_{\tilde \mu}(t) - f_{\tilde \nu}(t)| \, \mathrm{d}t\,.
\end{equation*}

The definitions of $f_\mu$ and $f_\nu$ imply
\begin{align*}
f_{\tilde \mu}(t) - f_{\tilde \nu}(t) & = \E \left[f_m(t-X) - f_m\left(t - Y\right)\right]   = \E \sum_{\ell = 1}^\infty \frac{f_m^{(\ell)}(t) (X^\ell - Y^\ell)}{\ell!}  = \sum_{\ell=1}^\infty \frac{f_m^{(\ell)}(t) \E (X^\ell - Y^\ell)}{\ell!}\,,
\end{align*}
where in the last step we used Fubini's theorem since $\mu$ and $\nu$ have moment generating functions that are finite everywhere.
By applying successively the assumption that $\sup_{\ell \geq 1} \Delta_\ell/\ell \leq 1$, Lemma~\ref{lem:derivative_bound}, and Stirling's approximation, we obtain
\begin{align*}
|f_{\tilde \mu}(t) - f_{\tilde \nu}(t)| \leq  \sum_{\ell=1}^\infty \frac{|f_m^{(\ell)}(t)| \ell^\ell}{\ell!} \leq \frac{(8 \mathrm em)^{2m}}{(4 \mathrm e m + |t|)^{2m}} \sum_{\ell=1}^\infty \frac{\ell^\ell}{(2 \mathrm e)^\ell \ell!} \leq \frac{(8 \mathrm em)^{2m}}{(4 \mathrm e m + |t|)^{2m}}\,.
\end{align*}

Therefore, recalling that $2m = p + 2$, we obtain
\begin{align}
W_p^p(\tilde \mu, \tilde \nu) & \leq 2^{p-1} (4 \mathrm e (p+2))^{p+2} \int_{-\infty}^\infty \frac{|t|^p}{(2 \mathrm e (p+2) + |t|)^{p+2}} \, \mathrm{d}t \nonumber \\
& = \frac{2^p (4 \mathrm e (p+2))^{p+2}}{2 \mathrm e (p+1)(p+2)} \leq (c p)^p \label{eq:variance_term}
\,,
\end{align}
where $c$ is a universal constant.

Combining~\eqref{eq:bias_term} and~\eqref{eq:variance_term} with~\eqref{eq:wp_triangle} yields
\begin{equation*}
W_p(\mu, \nu) \lesssim p = p \left(\sup_{\ell \geq 1} \frac{\Delta_\ell}{\ell}\right)\,,
\end{equation*}
as claimed.
\end{proof}

\subsection{Proof of Lemma~\ref{lem:subexponential_moments}}
The claim is trivial if $\orlicz{X} = \infty$, so we assume $\orlicz{X} < \infty$, and indeed, by homogeneity, we may assume $\orlicz{X} = 1$.
We have
\begin{equation*}
\Big(\frac{|X|}{p}\Big)^p \leq (e^{|X|/p}-1)^p \leq e^{|X|} - 1 = \psi_1(X)\,,
\end{equation*}
so
\begin{equation*}
\E |X|^p \leq p^p \E \psi_1(X) \leq p^p\,.
\end{equation*}
\qed

\subsection{Proof of Lemma~\ref{lem:derivative_bound}}
\label{sec:lem:derivative_bound}
The analyticity of $f_m$ follows immediately from the well known fact that $\frac{\sin t}{t}$ is analytic, so it suffices to prove the derivative bound.
The claim will follow from the fact that
\begin{equation*}
\left|\frac{d^n}{dt^n} \left(\frac{\sin t}{t}\right)^{2m} \right| \leq \frac{2^{2m} (2m)^n}{(1+|t|)^{2m}}\,,
\end{equation*}
which we prove by induction on $m$.
Recall that, for function $f$ and $g$, the general Leibniz rule states
\begin{equation*}
\frac{d^n}{dt^n} f(t)g(t) =  \sum_{k=0}^n \binom{n}{k} f^{(n-k)}(t) g^{(k)}(t)\,.
\end{equation*}
We therefore have
\begin{equation*}
\left|\frac{d^n}{dt^n} \left(\frac{\sin t}{t}\right)^{2} \right| \leq \sum_{k=0} \binom{n}{k} \left|\frac{d^{n-k}}{dt^{n-k}} \frac{\sin t}{t}\right| \left|\frac{d^k}{dt^k} \frac{\sin t}{t}\right| = \frac{4}{(1+|t|)^2} \sum_{k=0}^n \binom{n}{k} = \frac{4 \cdot 2^n}{(1+|t|)^2}\,,
\end{equation*}
where we have used Lemma~\ref{lem:sinc_derivative_bound} to bound the derivatives of $\frac{\sin t}{t}$.
This proves the base case $m = 1$.
By induction, for $m > 1$, we have
\begin{align*}
\left|\frac{d^n}{dt^n} \left(\frac{\sin t}{t}\right)^{2 m} \right| & \leq \sum_{k=0}^n \binom{n}{k} \left|\frac{d^{n-k}}{dt^{n-k}} \left(\frac{\sin t}{t}\right)^2\right| \left|\frac{d^k}{dt^k} \left(\frac{\sin t}{t}\right)^{2m-2}\right| \\
& \leq \sum_{k=0}^n \binom{n}{k} \frac{4 \cdot 2^{n-k}}{(1+|t|)^2} \frac{2^{2m-2} (2m-2)^k}{(1+|t|)^{2m-2}} \\
& = \frac{2^{2m}}{(1+|t|)^{2m}} \sum_{k=0}^n \binom{n}{k} 2^{n-k}(2m-2)^k \\
& = \frac{2^{2m} (2m)^n}{(1+|t|)^{2m}}\,.
\end{align*}

The function $f_m(t)$ therefore satisfies
\begin{align*}
\left|\frac{d^n}{dx^n} f_m(t)\right| & = C_m \left|\frac{d^n}{dx^n} \left(\frac{\sin (t/4 \mathrm{e} m)}{t/4 \mathrm{e} m}\right)^{2 m}\right|  \leq C_m (4 \mathrm{e} m)^{-n} \frac{2^{2m} (2m)^n}{(1+|t/4 \mathrm{e} m|)^{2m}}  \leq \frac{(8 e m)^{2m}}{(2e)^n(4 \mathrm{e} m + |t|)^{2m}}\,,
\end{align*}
which concludes the proof.
\qed

\subsection{Proof of Proposition~\ref{proposition:moment_matching_tight}}
\label{SEC:proof:proposition:moment_matching_tight}
The first part is the content of Lemma~\ref{lem:priors}.
For the second part, for a given $\ep \in (0, 1/2]$, denote by $\cP_\ep$ the distribution on $\RR$ with density
\begin{equation*}
f_\ep(x) := c_\ep \mathrm{e}^{-|x|^{\frac{1}{1-\ep}}}\,,
\end{equation*}
where $c_\ep \leq 1$ is a suitable normalizing constant.
Note that the moment generating function of $\cP_\ep$ is finite everywhere.
Integrating $f_\ep$ implies that if $X \sim \cP_\ep$, then for all positive integers $p$,
\begin{equation*}
(\E |X|^p)^{1/p} \asymp p^{1-\ep}\,.
\end{equation*}
Denote by $\cP_\ep'$ the distribution of $2X$.
The coupling $(X, 2X)$ is a monotone coupling between $\cP_\ep$ and $\cP_\ep'$, so by~\cite[Theorem~2.9]{San15} we have
\begin{equation*}
W_p(\cP_\ep, \cP_\ep') = (\E |X - 2X|^p)^{1/p} \asymp p^{1-\ep}\,,
\end{equation*}
as claimed.
\qed

\subsection{Proof of Lemma~\ref{lem:moments_deconvolution}}
\label{SEC:proof:lem:moments_deconvolution}
We assume that $\orlicz{\cD} < \infty$, since otherwise the claim is vacuous.
Write $M_\mu$, $M_\nu$, and $M_\cD$ for the moment generating functions of $\mu$, $\nu$, and $\cD$, respectively.
We have
\begin{align*}
\Delta_\ell^\ell(\mu, \nu) & = \left|\frac{d^\ell}{dt^\ell}M_\mu(t) - M_\nu(t)\right|_{t = 0} \\
& = \left|\frac{d^\ell}{dt^\ell}\frac{M_\mu(t)M_\cD(t) - M_\nu(t)M_\cD(t)}{M_\cD(t)}\right|_{t = 0} \\
& \leq \sum_{m = 0}^\ell \binom{\ell}{m} \left|\frac{d^{\ell-m}}{dt^{\ell-m}}M_\mu(t)M_\cD(t) - M_\nu(t)M_\cD(t)\right|_{t = 0} \left|\frac{d^{m}}{dt^{m}}\frac{1}{M_\cD(t)}\right|_{t = 0} \\
& \leq  \left(\sum_{m = 0}^\ell \binom{\ell}{m}\left|\frac{d^{m}}{dt^{m}}\frac{1}{M_\cD(t)}\right|_{t = 0}\right) \cdot  \sup_{m \leq \ell} \Delta^m_m(\mu * \cD, \nu * \cD)
\end{align*}

If $X \sim \cD$, then
\begin{equation*}
|M_\cD(t) - 1| = | \E e^{tX} - 1| \leq \E |e^{tX} - 1| \leq \E e^{t |X|} - 1 = \E \psi_1(t |X|)\,,
\end{equation*}
which implies in particular that
\begin{equation*}
|M_\cD(t) - 1| \leq \frac 1 2 \quad \quad \forall t \leq (2\orlicz{\cD})^{-1}\,.
\end{equation*}

The function $\frac{1}{M_\cD(t)}$ is therefore analytic and bounded in norm by $2$ on a disk of radius $(2\orlicz{\cD})^{-1}$ around the origin.
Standard results from complex analysis (see, e.g.,~\cite{FlaSed09}, Proposition~IV.1) then imply that
\begin{equation*}
\left|\frac{d^{m}}{dt^{m}}\frac{1}{M_\cD(t)}\right|_{t = 0} \leq m! (2\orlicz{\cD})^m\,.
\end{equation*}
Combining this with the above bound yields
\begin{equation*}
\Delta_\ell^\ell(\mu, \nu) \leq \ell! (4 \orlicz{\cD})^\ell \cdot \sup_{m \leq \ell} \Delta^m_m(\mu * \cD, \nu * \cD)\,,
\end{equation*}
and the claim follows.
\qed

\section{Supplemental lemmas}\label{sec:lemmas}
\begin{lemma}\label{lem:disc_close}
For all $\nu \in \cM_{V}$, there exists a $\nu' \in \cM_{\cA, V}$ such that
\begin{equation*}
W_2(\nu, \nu') \leq (V+\sigma)n^{-1/4}\,.
\end{equation*}
\end{lemma}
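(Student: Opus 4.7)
The plan is to construct $\nu'$ as the pushforward of $\nu$ under a quantization map. Concretely, I will define a measurable $T \colon [-V,V] \to \cA \cap [-V,V]$ sending each point of $[-V,V]$ to the closest grid point that itself lies in $[-V,V]$, set $\nu' \defeq T_\sharp \nu$, and then use the deterministic coupling $(X, T(X))$ with $X \sim \nu$ to control $W_2(\nu, \nu')$ by $\sup_{x \in [-V,V]} |x - T(x)|$.

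The first step is to verify that $T$ is well-defined and every point of $[-V, V]$ lies within distance $(V+\sigma)n^{-1/4}$ of $\cA \cap [-V,V]$. By the definition of $\cA$, we have $\alpha_0 = -(V+\sigma)\log n$ and $\alpha_N \geq \alpha_0 + 2(V+\sigma)\log n = (V+\sigma)\log n$, so for $n$ large enough both $\alpha_0 < -V$ and $\alpha_N > V$, i.e.\ the grid strictly straddles $[-V,V]$ on both sides. Let $i^\star$ be the smallest index with $\alpha_{i^\star} \ge -V$ and $j^\star$ the largest index with $\alpha_{j^\star} \le V$, so that $\cA \cap [-V,V] = \{\alpha_{i^\star}, \dots, \alpha_{j^\star}\}$, a nonempty set. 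For any $x \in [-V, V]$ there is a grid point $\alpha_i \in \cA \cap [-V,V]$ within the grid spacing $\tfrac{V+\sigma}{n^{1/4}}$: in the interior this is clear, while near the boundary (say $x \in [-V, \alpha_{i^\star})$) the point $\alpha_{i^\star}$ satisfies $\alpha_{i^\star} - x \le \alpha_{i^\star} - \alpha_{i^\star - 1} = \tfrac{V+\sigma}{n^{1/4}}$ because $\alpha_{i^\star - 1} < -V \le x$; the analogous argument handles the upper boundary.

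Once the displacement bound $|T(x) - x| \le (V+\sigma)n^{-1/4}$ is in hand, the conclusion is immediate: $\nu' = T_\sharp \nu$ is supported on $\cA \cap [-V,V]$, hence $\nu' \in \cM_{\cA, V}$, and the coupling $\gamma \defeq (\mathrm{id}, T)_\sharp \nu \in \cC(\nu, \nu')$ gives
\begin{equation*}
W_2^2(\nu, \nu') \le \int_{-V}^V |x - T(x)|^2 \, d\nu(x) \le \Bigl(\frac{V+\sigma}{n^{1/4}}\Bigr)^2,
\end{equation*}
and taking square roots yields the claim. For the finitely many small $n$ where the straddling condition fails, the lemma holds trivially since any two measures on $[-V,V]$ satisfy $W_2 \le 2V \le (V+\sigma)n^{-1/4}$ for $n$ bounded.

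There is no real obstacle here; the only subtlety is avoiding the nuisance of the boundary of $[-V,V]$, which I handle by working with the restricted grid $\cA \cap [-V,V]$ rather than the full $\Pi_\cA$ of the main text (the latter could send a point of $[-V,V]$ to a grid point slightly outside $[-V,V]$).
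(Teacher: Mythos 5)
Your proof is essentially the same as the paper's: quantize to the nearest grid point of $\cA \cap [-V,V]$, push forward, and bound $W_2$ by the supremum of the displacement. One small quibble: your fallback bound $2V \le (V+\sigma)n^{-1/4}$ for small $n$ is false in general (take $\sigma$ much smaller than $V$), but it is also unnecessary---the construction of $\cA$ in Section~\ref{sec:computational} already assumes $n \ge 3$, for which $\log n > 1$ gives $\alpha_0 = -(V+\sigma)\log n < -V$ and $\alpha_N \ge (V+\sigma)\log n > V$, so the grid always straddles $[-V,V]$.
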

\begin{proof}
Let $\Pi_{\cA, V}$ be the map sending each point in $[-V, V]$ to the nearest point in $\cA \cap [-V, V]$, and set $\nu' \defeq {\Pi_{\cA, V}}_\sharp \nu$.
Clearly $\nu' \in \cM_{\cA, V}$, and
\begin{equation*}
W_2^2(\nu, \nu') \leq \sup_{x \in [-V, V]} |x - \Pi_{\cA, V}(x)|^2 \leq \frac{(V+\sigma)^2}{n^{1/2}}\,,
\end{equation*}
which proves the claim.
\end{proof}

\begin{lemma}\label{lem:proj_close}
For all $\mu \in \cM_{V}$,
\begin{equation*}
W_2({\Pi_{\cA}}_\sharp (\mu * \cD), \mu * \cD) \lesssim (V+\sigma) n^{-1/4}\,.
\end{equation*}
\end{lemma}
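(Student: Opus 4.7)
The plan is to bound $W_2$ by exhibiting the natural coupling: let $X \sim \mu * \cD$ and couple it with $\Pi_{\cA}(X) \sim {\Pi_\cA}_\sharp(\mu * \cD)$. This yields the bound
\begin{equation*}
W_2^2({\Pi_\cA}_\sharp(\mu * \cD), \mu * \cD) \le \E\bigl[(X - \Pi_\cA(X))^2\bigr],
\end{equation*}
so the entire problem reduces to bounding the expected squared quantization error. I would then split $\RR$ into the ``in-grid'' region $I \defeq [\alpha_0, \alpha_N]$ and its complement, and control the two pieces separately.

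On $I$, consecutive grid points are separated by $(V+\sigma)/n^{1/4}$, so by construction of $\Pi_\cA$ we have $|X - \Pi_\cA(X)| \le (V+\sigma)/n^{1/4}$ deterministically; this piece contributes at most $(V+\sigma)^2/n^{1/2}$ to the expectation, which is exactly the desired order once we take square roots. The main (and essentially only) technical issue is the tail region $I^c$. Write $X = Y + Z$ with $Y \sim \mu$ supported in $[-V, V]$ and $Z \sim \cD$. Since $\alpha_N \ge (V+\sigma)\log n$ and $\alpha_0 \le -(V+\sigma)\log n$, the event $\{X \notin I\}$ forces $|Z| \ge (V+\sigma)\log n - V \ge \sigma \log n$ (for $n \ge 3$). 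Moreover, when $X > \alpha_N > 0$ we have $|X - \Pi_\cA(X)| = X - \alpha_N \le |X|$, and symmetrically for $X < \alpha_0$, so $|X - \Pi_\cA(X)|^2 \le X^2 \le 2V^2 + 2Z^2$ on $I^c$.

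The key step is then to bound
\begin{equation*}
\E\bigl[(X - \Pi_\cA(X))^2 \1_{I^c}\bigr] \le 2V^2 \,\p(|Z| \ge \sigma \log n) + 2\,\E\bigl[Z^2 \1_{|Z| \ge \sigma \log n}\bigr].
\end{equation*}
Sub-exponentiality of $\cD$ with $\orlicz{\cD} \le \sigma$ gives $\p(|Z| \ge t) \le 2e^{-t/\sigma}$ by Markov's inequality applied to $e^{|Z|/\sigma}$; hence $\p(|Z| \ge \sigma \log n) \le 2/n$. For the truncated second moment, integration by parts using the same tail bound yields $\E[Z^2 \1_{|Z| \ge \sigma \log n}] \lesssim \sigma^2(\log n)^2/n$. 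Combining, the tail contribution is $O((V^2 + \sigma^2 (\log n)^2)/n)$, which is of lower order than the in-grid contribution $(V+\sigma)^2/n^{1/2}$. Taking square roots delivers $W_2 \lesssim (V+\sigma) n^{-1/4}$, as claimed. I do not anticipate any serious obstacle; the only care needed is in verifying that $\alpha_N$ and $|\alpha_0|$ are at least $(V+\sigma)\log n$ so that the sub-exponential Markov bound is strong enough to kill the $V^2$ prefactor.
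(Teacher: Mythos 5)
Your proof is correct and follows essentially the same approach as the paper's: the deterministic coupling $(X, \Pi_\cA(X))$, the split into the grid region and its complement, and the in-grid quantization bound $(V+\sigma)^2 n^{-1/2}$ are all identical. The only (minor) difference is in the tail: the paper bounds $\E[|X+\xi|^2 \1_{|X+\xi| > (V+\sigma)\log n}]$ by Cauchy--Schwarz against the fourth moment, while you bound $(X-\Pi_\cA(X))^2 \le 2V^2 + 2Z^2$ and integrate the sub-exponential tail of $Z$ directly; both yield a $o(n^{-1/2})$ contribution and the same conclusion.
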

\begin{proof}
By the definition of the Wasserstein distance, we have
\begin{equation*}
W_2^2({\Pi_{\cA}}_\sharp (\mu * \cD), \mu * \cD) \leq \E |\Pi_{\cA}(X + \xi) - X - \xi|^2 \quad \quad X \sim \mu, \xi \sim \cD\,.
\end{equation*}
If $|(X+\xi)| \leq (V + \sigma) \log n$, then $|\Pi_{\cA}(X + \xi) - X - \xi|^2 \leq (V + \sigma)^2n^{-1/2}$, which implies
\begin{align*}
\E |\Pi_{\cA}(X + \xi) - X - \xi|^2 & \leq (V+\sigma)n^{-1/4} + \E[|\alpha_N - X - \xi|^2 \1_{X + \xi > \alpha_N}] + \E[|\alpha_0 - X - \xi|^2 \1_{X + \xi < \alpha_0}] \\
& \leq (V+\sigma)^2n^{-1/2} + \E[|X + \xi|^2 \1_{|X + \xi| > (V + \sigma) \log n}] \\
& \leq (V+\sigma)^2n^{-1/2} + \E[|X + \xi|^4]^{1/2} \p[|X| > \sigma \log n]^{1/2}\,,
\end{align*}
where the last step uses the Cauchy-Schwarz inequality.

The assumption that $\orlicz{X} \leq \sigma$ implies
\begin{equation*}
\E[|X + \xi|^4]^{1/2} \lesssim (V+\sigma)^2
\end{equation*}
and
\begin{equation*}
\p[|X| > \sigma \log n] \leq \p[e^{|X|/\sigma} > n] \leq \frac{2}{n}\,.
\end{equation*}
Combining the above three displays yields
\begin{equation*}
\E |\Pi_{\cA}(X + \xi) - X - \xi|^2 \lesssim (V + \sigma)^2n^{-1/2}\,,
\end{equation*}
and this implies the stated bound.
\end{proof}

\begin{lemma}\label{lem:round_close}
Let $\mu$ be any measure on $[-V, V]$ with quantile function $\cQ_\mu$, and let $g \in \cF_V$ satisfy $g(x_i) = \cQ_\mu(i/n)$ for $1 \leq i \leq n$.
Then
\begin{equation*}
W_2(\mu, \pi_g) \leq 2V n^{-1/2}\,.
\end{equation*}
\end{lemma}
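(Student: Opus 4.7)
The plan is to invoke the well-known one-dimensional formula $W_2^2(\mu,\nu) = \int_0^1 |\cQ_\mu(t) - \cQ_\nu(t)|^2 \, dt$ and then to exploit the near-trivial structure of $\cQ_{\pi_g}$. Since $g$ is nondecreasing and $\pi_g = \frac{1}{n}\sum_{i=1}^n \delta_{g(x_i)} = \frac{1}{n}\sum_{i=1}^n \delta_{\cQ_\mu(i/n)}$, the quantile function of $\pi_g$ is a step function that takes the constant value $\cQ_\mu(i/n)$ on each interval $((i-1)/n,\,i/n]$. Hence
\begin{equation*}
W_2^2(\mu,\pi_g) = \sum_{i=1}^n \int_{(i-1)/n}^{i/n} \bigl(\cQ_\mu(t) - \cQ_\mu(i/n)\bigr)^2 \, dt.
\end{equation*}

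Next, I would bound the integrand by the appropriate ``jump'' of $\cQ_\mu$. Setting $b_0 := -V$ and $b_i := \cQ_\mu(i/n)$ for $1 \le i \le n$, monotonicity of $\cQ_\mu$ gives $\cQ_\mu(t) \in [b_{i-1},\, b_i]$ for $t \in ((i-1)/n,\, i/n]$, where for $i = 1$ I use the fact that $\mu$ is supported in $[-V,V]$, so $\cQ_\mu(t) \ge -V = b_0$. Consequently $|\cQ_\mu(t) - b_i| \le b_i - b_{i-1}$ on each such interval, yielding
\begin{equation*}
W_2^2(\mu,\pi_g) \le \frac{1}{n}\sum_{i=1}^n (b_i - b_{i-1})^2.
\end{equation*}

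Finally, I would close the estimate with the elementary observation that for nonnegative numbers, $\sum_i c_i^2 \le (\max_i c_i)\sum_i c_i$. Applied to $c_i = b_i - b_{i-1} \ge 0$, this gives $\sum_{i=1}^n (b_i - b_{i-1})^2 \le (b_n - b_0)\max_i (b_i - b_{i-1}) \le (2V)(2V) = 4V^2$, since $b_0 = -V$ and $b_n = \cQ_\mu(1) \le V$. Combining the displays yields $W_2^2(\mu,\pi_g) \le 4V^2/n$, which is exactly the claimed bound $W_2(\mu,\pi_g) \le 2V n^{-1/2}$.

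There is no real obstacle here; the only mildly delicate point is handling the leftmost interval $i=1$, where one cannot bound $|\cQ_\mu(t) - \cQ_\mu(1/n)|$ by a jump of the form $\cQ_\mu(1/n) - \cQ_\mu(0)$ (since $\cQ_\mu$ may be undefined or tend to $-\infty$ at $0$), which is precisely why I build in the artificial lower endpoint $b_0 = -V$ that the support assumption provides.
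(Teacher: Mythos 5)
Your proof is correct and follows essentially the same route as the paper: both pass through the one-dimensional quantile representation $W_2^2(\mu,\pi_g)=\int_0^1|\cQ_\mu-\cQ_{\pi_g}|^2$, identify $\cQ_{\pi_g}$ as the step function taking value $\cQ_\mu(i/n)$ on $((i-1)/n,i/n]$, bound each interval's contribution by the jump $(b_i-b_{i-1})^2$, and finish with the estimate $\sum_i c_i^2\le(\max_i c_i)\sum_i c_i\le(2V)^2$. The only cosmetic difference is that the paper sets $\cQ_\mu(0):=\lim_{p\to 0^+}\cQ_\mu(p)\ge -V$ whereas you use the (slightly looser but sufficient) anchor $b_0:=-V$ directly; the argument and the final constant are the same.
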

\begin{proof}
The definition of $\pi_g$ implies that the quantile function $\cQ_{\pi_g}$ of $\pi_g$ satisfies
\begin{equation*}
\cQ_{\pi_g}(x) = \cQ_\mu(i/n) \quad \quad \text{ where $(i-1)/n < x \leq i/n$}\,.
\end{equation*}
Since $\mu$ is supported on $[-V, V]$, we set $\cQ_\mu(0) \defeq \lim_{p \to O+} \cQ_\mu(p) \geq - V$.
By the explicit representation for the Wasserstein distance between one-dimensional measures~\cite[Theorem~2.10]{BobLed16}, we have
\begin{align*}
W_2^2(\mu, \pi_g) & = \int_{0}^1 |\cQ_\mu(x) - \cQ_{\pi_g}(x)|^2 \, \mathrm{d}x \\
& = \sum_{i=1}^n \int_{(i-1)/n}^{i/n} |\cQ_\mu(x) - \cQ_\mu(i/n)|^2 \, \mathrm{d}x \\
& \leq \sum_{i=1}^n \int_{(i-1)/n}^{i/n} |\cQ_\mu((i-1)/n) - \cQ_\mu(i/n)|^2\, \mathrm{d}x \\
& \leq \frac 1n \sum_{i=1}^n |\cQ_\mu((i-1)/n) - \cQ_\mu(i/n)|^2 \\
& \leq \frac{2V}{n} \sum_{i=1}^n \cQ_\mu(i/n) - \cQ_\mu((i-1)/n) \leq \frac{(2V)^2}{n}\,.
\end{align*}
\end{proof}

\begin{lemma}\label{lem:cm}
Let $f_m$ be defined as in~\eqref{eq:sinc_kernel}.
If $\int_{- \infty}^\infty f_m(t) \,\mathrm{d}t = 1$, then
\begin{equation*}
C_m \leq 1\,.
\end{equation*}
\end{lemma}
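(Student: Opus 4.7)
The plan is to turn the normalization condition into a lower bound on an integral of a power of the sinc function, and then give a crude but sufficient estimate of that integral. The only subtlety is that the naive bound using $\sin u/u \ge \sin 1$ on $[-1,1]$ is too weak for large $m$; one needs a bound that captures the concentration of $(\sin u/u)^{2m}$ near the origin.

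First I would change variables. Setting $u = t/(4\mathrm{e}m)$ in $\int_{-\infty}^{\infty} f_m(t)\,\mathrm{d}t = 1$ gives
\begin{equation*}
4\mathrm{e}m\, C_m \int_{-\infty}^{\infty}\!\Big(\tfrac{\sin u}{u}\Big)^{\!2m}\mathrm{d}u = 1,
\end{equation*}
so the claim $C_m \le 1$ is equivalent to the integral inequality
\begin{equation*}
I_m := \int_{-\infty}^{\infty}\!\Big(\tfrac{\sin u}{u}\Big)^{\!2m}\mathrm{d}u \;\ge\; \frac{1}{4\mathrm{e}m}.
\end{equation*}

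Next I would lower bound $I_m$ by restricting to $|u| \le 1$ and quantifying the concentration of $(\sin u/u)^{2m}$ near zero. From the Taylor bound $\sin u/u \ge 1 - u^2/6$ valid on $[-1,1]$, together with the elementary inequality $1 - x \ge \mathrm{e}^{-2x}$ on $[0,1/2]$ (applied with $x = u^2/6 \le 1/6$), one gets the Gaussian-type bound
\begin{equation*}
\Big(\tfrac{\sin u}{u}\Big)^{\!2m} \;\ge\; \mathrm{e}^{-2m u^2/3}\qquad \text{for } |u|\le 1.
\end{equation*}
Integrating and rescaling $v = u\sqrt{2m/3}$ yields, for $m \ge 2$ (so that $\sqrt{2m/3} \ge 1$),
\begin{equation*}
I_m \;\ge\; \sqrt{\tfrac{3}{2m}}\int_{-1}^{1}\mathrm{e}^{-v^2}\mathrm{d}v \;\ge\; \frac{2}{\mathrm{e}}\sqrt{\tfrac{3}{2m}}.
\end{equation*}

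Finally I would verify the required inequality. Multiplying through by $4\mathrm{e}m$ gives $4\mathrm{e}m\, I_m \ge 8\sqrt{3m/2} \ge 1$ for $m \ge 2$, which is the desired bound. The remaining case $m = 1$ is handled by the classical identity $\int_{-\infty}^{\infty}(\sin u/u)^2\,\mathrm{d}u = \pi$, giving $4\mathrm{e}\pi \gg 1$. The main (mild) obstacle is just choosing the right pointwise lower bound for $\sin u/u$: a constant bound on $[-1,1]$ decays like $\sin(1)^{2m}$ and fails for large $m$, so one must pass through the $\mathrm{e}^{-cmu^2}$ estimate to capture the $1/\sqrt{m}$ scaling of $I_m$, which comfortably dominates the target $1/(4\mathrm{e}m)$.
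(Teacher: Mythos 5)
Your proof is correct, but it takes a noticeably heavier route than the paper's. You change variables to $u=t/(4\mathrm{e}m)$ up front, which forces you to capture the $1/\sqrt m$ decay of $I_m=\int(\sin u/u)^{2m}\,\mathrm{d}u$; hence the Gaussian envelope $1-x\ge \mathrm{e}^{-2x}$, the rescaling $v=u\sqrt{2m/3}$, and the separate $m=1$ case. The paper never rescales: it works directly with
\begin{equation*}
\int_{-\infty}^{\infty}\Big(\tfrac{\sin(t/4\mathrm{e}m)}{t/4\mathrm{e}m}\Big)^{2m}\mathrm{d}t\,,
\end{equation*}
where the argument of $\sin$ on $|t|\le 1$ is at most $1/(4\mathrm{e}m)$ --- very close to $0$. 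Using $\sin u/u\ge 1-u^2/6$ and the crude Bernoulli-type bound $(1-x)^{2m}\ge 1-2mx$, the integrand is $\ge 1/2$ for all $|t|\le 1$ (uniformly in $m$), and integrating over $[-1,1]$ already gives $\ge 1$. No Gaussian comparison, no case split, no asymptotic in $m$. Your observation that ``the naive constant bound on $[-1,1]$ fails for large $m$'' is true in the $u$-variable but moot in the $t$-variable, which is exactly the simplification you forgo by rescaling first. Both proofs are valid; the paper's is shorter and requires fewer moving parts, while yours has the (minor) advantage of making the true $\Theta(1/\sqrt m)$ size of $I_m$ visible.
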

\begin{proof}
It suffices to show that $\int_{-\infty}^{\infty} \left(\frac{\sin(t/4 \mathrm{e} m)}{t/4 \mathrm{e}m}\right)^{2 m} \, \mathrm{d}t \geq 1$.
The inequality $|\sin(t)| \geq |t| - \frac{|t|^3}{6}$ implies
\begin{equation*}
\left(\frac{\sin(t/4 \mathrm{e}m)}{t/4 \mathrm{e}m}\right)^{2 m} \geq \left(1 - \frac{t^2}{6(4 \mathrm{e}m)^2}\right)^{2m} \geq 1- \frac{t^2}{48 \mathrm{e}^2 m} \geq \frac 12 \quad \text{if $t^2 \leq 24 \mathrm{e}^2 m$}\,.
\end{equation*}
Therefore
\begin{equation*}
\int_{-\infty}^{\infty} \left(\frac{\sin(t/4 \mathrm{e}m)}{t/4 \mathrm{e}m}\right)^{2 m} \, \mathrm{d}t \geq \int_{-1}^{1} \frac 12 \, \mathrm{d}t \geq 1\,.
\end{equation*}
\end{proof}

\begin{lemma}\label{lem:sinc_derivative_bound}
\begin{equation*}
\left|\frac{d^n}{dt^n} \frac{\sin t}{t}\right| \leq \frac{2}{1+|t|} \quad \quad \forall n \geq 0\,.
\end{equation*}
\end{lemma}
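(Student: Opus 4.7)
The plan is to combine two complementary estimates on $f^{(n)}(t)$---a uniform-in-$t$ bound, sharpest for small $|t|$, and a recursive bound exploiting cancellation, sharpest for large $|t|$---and splice them via induction on $n$ with a carefully chosen case split.

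First I would establish the uniform bound by writing $\frac{\sin t}{t} = \int_0^1 \cos(st)\,\mathrm ds$ and differentiating $n$ times under the integral sign, which gives $f^{(n)}(t) = \pm\int_0^1 s^n \cos(st)\,\mathrm ds$ or $\pm\int_0^1 s^n \sin(st)\,\mathrm ds$ according to the parity of $n$; using $|\cos(st)|,|\sin(st)|\le 1$ yields $|f^{(n)}(t)| \leq \int_0^1 s^n\,\mathrm ds = \frac{1}{n+1}$ for every $t$. Next I would derive a recursion by applying the generalized Leibniz rule to the identity $\sin t = t\cdot f(t)$; because $t$ has only one nonzero derivative (in fact, $\frac{d}{dt}t = 1$ and higher derivatives vanish), only two terms in the Leibniz sum survive, giving
\begin{equation*}
\sin^{(n)}(t) = t\,f^{(n)}(t) + n\, f^{(n-1)}(t),\qquad\text{equivalently,}\qquad f^{(n)}(t) = \frac{\sin^{(n)}(t) - n\,f^{(n-1)}(t)}{t}\quad(t\neq 0).
\end{equation*}

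The conclusion would then follow by induction on $n$. For $n = 0$ I split on whether $|t|\le 1$ (using $|\sin(t)/t|\le 1 \le 2/(1+|t|)$) or $|t|\ge 1$ (using $|\sin(t)/t|\le 1/|t|\le 2/(1+|t|)$). For the inductive step, assuming the bound for $n-1$, I split at the threshold $|t| = 2n+1$: if $|t|\le 2n+1$ the uniform bound $\frac{1}{n+1}$ already suffices, since $\frac{1}{n+1}\le \frac{2}{1+|t|}$ is equivalent to $|t|\le 2n+1$; if $|t|\ge 2n+1$ I apply the recursion with $|\sin^{(n)}(t)|\le 1$ and the inductive hypothesis to obtain $|f^{(n)}(t)| \le \frac{1}{|t|} + \frac{2n}{|t|(1+|t|)}$, which one verifies is at most $\frac{2}{1+|t|}$ exactly when $|t|\ge 2n+1$.

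The main obstacle is locating the correct splitting point. The naive estimates $|f^{(n)}(t)|\le \frac{1}{n+1}$ and $|f^{(n)}(t)|\le \frac{2}{|t|}$ (the latter obtainable by a single integration by parts) each individually fail to imply $\frac{2}{1+|t|}$ on a nonempty range of $|t|$: the first is too loose for $|t|$ slightly above $1$ when $n$ is small, and the second is never strictly below $\frac{2}{1+|t|}$. The improvement comes from trading off the direct integral bound against the recursion-plus-induction bound, and recognizing that the threshold $|t|=2n+1$ is precisely the value at which the two estimates meet $\frac{2}{1+|t|}$ with equality on their respective sides, so that they tile the real line with no gap.
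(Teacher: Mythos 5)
Your proposal is correct and follows essentially the same strategy as the paper: the uniform bound $|f^{(n)}(t)| \le \frac{1}{n+1}$ via the representation $\frac{\sin t}{t} = \int_0^1 \cos(st)\,\mathrm ds$ covers $|t|\le 2n+1$, and an induction on $n$ via the recursion $t f^{(n)}(t) = \sin^{(n)}(t) - n f^{(n-1)}(t)$ covers $|t| > 2n+1$, with the same threshold and the same final estimate $\frac{|t|+2n+1}{|t|(1+|t|)} \le \frac{2}{1+|t|}$. The only cosmetic difference is how the recursion is obtained: you apply the Leibniz rule directly to the identity $t\,f(t)=\sin t$, whereas the paper integrates $\int_0^1 x^n\cos^{(n)}(tx)\,\mathrm dx$ by parts; these yield the same formula (note $\cos^{(n-1)}=\sin^{(n)}$), so the arguments are equivalent.
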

\begin{proof}
Recall that
\begin{equation*}
\frac{\sin t}{t} = \int_0^1 \cos(t x) \, \mathrm{d}x\,,
\end{equation*}
which implies after differentiating under the integral that
\begin{equation*}
\frac{d^n}{dt^n} \frac{\sin t}{t} = \int_{0}^1 x^n \cos^{(n)}(tx) \, \mathrm{d}x\,.
\end{equation*}
Since $|\cos^{(n)}(tx)| \leq 1$, we obtain immediately that
\begin{equation*}
\left| \frac{d^n}{dt^n} \frac{\sin t}{t} \right| \leq \int_0^1 x^n \, \mathrm{d}x = \frac{1}{n+1}\,,
\end{equation*}
which proves the claim when $|t| \leq 2n + 1$.

To prove the claim when $|t| > 2n + 1$, we proceed by induction.
When $n = 0$ and $|t| > 2n + 1 = 1$, the bound $|\sin(t)| \leq 1$ implies
\begin{equation*}
\left| \frac{\sin t}{t} \right| \leq \frac{1}{t} \leq \frac{2}{1 + |t|}\,.
\end{equation*}
We now assume that the bound in question holds for $n - 1$ and all $t$.
Integrating by parts and applying the induction hypothesis yields
\begin{align*}
\left|\int_{0}^1 x^n \cos^{(n)}(tx)\right| & = \left| \frac{\cos^{(n-1)} t}{t} - \frac{n}{t} \int_0^1 x^{n-1} \cos^{(n-1)}(tx) \, \mathrm{d}x\right| \\
& \leq \frac{1}{|t|} + \frac{n}{|t|} \frac{2}{1+|t|} = \frac{|t| + 2n + 1}{|t|(1+|t|)}\,.
\end{align*}
Since $|t| > 2n + 1$, this quantity is smaller than $\frac{2}{1 + |t|}$, as claimed.
\end{proof}

\begin{lemma}\label{lem:empirical_w2}
Let $\mu$ be any distribution satisfying $\orlicz{\mu} \leq K$, and let $\hat \mu = \frac 1n \sum_{i=1}^n \delta_{X_i}$, where $X_i \sim \mu$ are i.i.d.
Then
\begin{equation*}
\E[W_2^2(\mu, \hat \mu)] \leq \frac{16 K^2}{\sqrt n}\,.
\end{equation*}
\end{lemma}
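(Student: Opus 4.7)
The plan is to exploit the fact that $W_2$ between one-dimensional measures admits the explicit quantile representation
\begin{equation*}
W_2^2(\mu, \hat\mu) = \int_0^1 \bigl(F_\mu^{-1}(u) - F_{\hat\mu}^{-1}(u)\bigr)^2 \, du,
\end{equation*}
where $F_\mu$ and $F_{\hat\mu}$ are the CDFs of $\mu$ and $\hat\mu$. Writing $X_i = F_\mu^{-1}(U_i)$ for i.i.d.\ uniforms $U_i$, the monotonicity of $F_\mu^{-1}$ gives $X_{(i)} = F_\mu^{-1}(U_{(i)})$, so $F_{\hat\mu}^{-1}(u) = F_\mu^{-1}(U_{(\lceil nu \rceil)})$. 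By Fubini, the task reduces to bounding
\begin{equation*}
\int_0^1 \E\bigl[\bigl(F_\mu^{-1}(u) - F_\mu^{-1}(U_{(\lceil nu \rceil)})\bigr)^2\bigr]\, du,
\end{equation*}
which is a question about how sensitive $F_\mu^{-1}$ is to deviations of the uniform order statistics around their means.

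The cleanest route is to invoke the general univariate empirical Wasserstein bound from the Bobkov--Ledoux monograph~\cite{BobLed16}, which states that for any measure $\mu$ on $\mathbb{R}$ with finite second moment, $\E[W_2^2(\mu, \hat\mu)] \leq C \E[X^2]/\sqrt n$ for a universal constant $C$. Combining this with Lemma~\ref{lem:subexponential_moments} applied at $p = 2$ (which yields $\E[X^2] \leq (2\orlicz{\mu})^2 \leq 4K^2$) immediately produces the stated inequality $\E[W_2^2(\mu,\hat\mu)] \leq 16 K^2/\sqrt n$, provided the constant in the B--L bound is no larger than $4$.

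A self-contained alternative would be to split the integration over $[0,1]$ into a bulk region $[1/\sqrt n, 1 - 1/\sqrt n]$ and its complement. On the tails, the sub-exponential estimate $\mathbb{P}[|X| > Kt] \leq 2 e^{-t}$ controls the size of $F_\mu^{-1}$ in the small-measure region, while on the bulk one would use Bernstein-type concentration $\mathrm{Var}(\hat F_n(x)) = F(x)(1-F(x))/n$ combined with a change-of-variable from CDF fluctuations to quantile fluctuations, integrated against a shell decomposition of $\mathbb{R}$ at dyadic scales of $K$.

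The main obstacle in either approach is keeping constants sharp enough to land on exactly $16$: the truncation argument tends to introduce extra logarithmic factors unless the dyadic shells are balanced carefully against the sub-exponential tail, while the citation route requires verifying that the precise Bobkov--Ledoux bound has universal constant at most $4$. Since only the scaling in $K^2/\sqrt n$ is used downstream in Proposition~\ref{proposition:estimator_expectation} (after taking a square root), mild slack in the constant would not affect the main theorem.
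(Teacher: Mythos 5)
Your ``cleanest route'' rests on a misstatement of the Bobkov--Ledoux result. There is no theorem of the form $\E[W_2^2(\mu, \hat\mu)] \leq C\,\E[X^2]/\sqrt{n}$ for arbitrary $\mu$ with finite second moment---that bound is simply false. Indeed, it is well known in the empirical optimal transport literature that $\E[W_2^2(\mu,\hat\mu)] = O(n^{-1/2})$ requires roughly four finite moments, not two: for a univariate law with tails $\p[|X|>t] \asymp t^{-\alpha}$ with $2 < \alpha \leq 4$, the second moment is finite but the rate is strictly slower than $n^{-1/2}$. What Theorem~7.16 of~\cite{BobLed16} actually gives is
\begin{equation*}
\E[W_2^2(\mu, \hat\mu)] \;\leq\; \frac{4}{\sqrt{n}} \int_{-\infty}^\infty |x|\sqrt{F(x)\bigl(1-F(x)\bigr)}\,\mathrm{d}x,
\end{equation*}
and the right-hand integral (call it $J_2(\mu)$) is a strictly stronger functional than $\E[X^2]$; it is finite only when the tails decay somewhat faster than $|x|^{-4}$. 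So the sub-exponential hypothesis is not a convenience for sharpening constants, as your write-up suggests---it is what makes $J_2(\mu)$ finite and controllable at all. The paper's proof uses exactly this $J_2$ bound and then substitutes the tail estimate $F(x)(1-F(x)) \leq \p[|X|\geq|x|] \leq 2e^{-|x|/K}$, after which the remaining integral is a gamma integral yielding the $K^2/\sqrt{n}$ scaling.

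Your ``self-contained alternative'' contains the right ingredients (quantile representation, $\mathrm{Var}(\hat F_n(x)) = F(1-F)/n$, sub-exponential tail control), and if carried through carefully it essentially re-derives the $J_2$ bound from first principles before applying the same tail estimate---so it would work, but it duplicates work that~\cite{BobLed16} has already done. To repair the argument, replace the nonexistent ``$\E[X^2]$ bound'' with the correct $J_2$ bound, and then plug in the sub-exponential tail as the paper does; no dyadic shell decomposition is needed.
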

\begin{proof}
We assume without loss of generality that $\mu$ is centered.
By~\cite[Theorem~7.16]{BobLed16},
\begin{equation*}
\E[W_2^2(\mu, \hat \mu)] \leq \frac{4}{\sqrt n} \int_{-\infty}^\infty |x| \sqrt{F(x)(1-F(x)} \, \mathrm{d}x\,,
\end{equation*}
where $F$ is the CDF of the measure $\mu$.
Let $X \sim \mu$. Then
\begin{equation*}
F(x)(1-F(x)) = \p[X \leq x]\p[X > x] \leq \p[|X| \geq |x|] \leq 2 e^{-|x|/K}\,,
\end{equation*}
where in the last step we have used the fact that $\orlicz{\mu} \leq K$.
We obtain
\begin{equation*}
\E[W_2^2(\mu, \hat \mu)] \leq \frac{16}{\sqrt n} \int_{0}^\infty x e^{-x / K} \, \mathrm{d}x = \frac{16 K^2}{\sqrt n}\,.
\end{equation*}
\end{proof}

\begin{lemma}\cite{CaiLow11}\label{lem:chi_square}
If $P$ and $Q$ are two centered measures supported on $[-V, V]$ such that $\Delta_\ell(P, Q) = 0$ for $\ell = 1, \dots, k-1$, then
\begin{equation*}
\mathrm{TV}((P * N(0,1))^{\otimes n}, (Q * N(0,1))^{\otimes n})^2 \leq \left(1 + e^{5V^2/2} \frac{(2V^2)^k}{k!}\right)^n - 1\,.
\end{equation*}
\end{lemma}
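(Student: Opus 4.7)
The strategy is a classical Le Cam style reduction: convert total variation into a chi-square divergence, tensorize, then bound the single-observation chi-square via a moment expansion. By the Cauchy--Schwarz inequality $\mathrm{TV}(\mu, \nu)^2 \le \chi^2(\mu, \nu)/4$ combined with the tensorization identity $1 + \chi^2(\mu^{\otimes n}, \nu^{\otimes n}) = (1 + \chi^2(\mu, \nu))^n$, it suffices to establish
\begin{equation*}
\chi^2(P * N(0,1),\, Q * N(0,1)) \le e^{5V^2/2}\frac{(2V^2)^k}{k!},
\end{equation*}
since the factor $1/4$ is harmlessly absorbed by $(1+c)^n - 1 \ge \tfrac14((1+c)^n-1)$ for $c \ge 0$.

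Writing $p = P * N(0,1)$, $q = Q * N(0,1)$, and letting $\phi$ denote the standard Gaussian density, I would pass to the simpler reference $\phi$ via
\begin{equation*}
\chi^2(p, q) = \int \frac{(p-q)^2}{q}\, dx \le \Bigl(\sup_x \frac{\phi(x)}{q(x)}\Bigr) \int \frac{(p-q)^2}{\phi}\, dx.
\end{equation*}
The ratio $q(x)/\phi(x) = \E_Q[e^{xY - Y^2/2}]$ is bounded below by $e^{-V^2/2}$ via Jensen's inequality together with the facts that $Q$ is centered and $\E_Q[Y^2] \le V^2$, so $\sup_x \phi/q \le e^{V^2/2}$.

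The $L^2(1/\phi)$ quantity becomes explicit in terms of moments through the Gaussian kernel identity
\begin{equation*}
\int \frac{\phi(x-y)\,\phi(x-z)}{\phi(x)}\, dx = e^{yz},
\end{equation*}
which follows by completing the square: the integrand factors as $\phi(x-y-z)\, e^{yz}$. Fubini and the power series $e^{yz} = \sum_{\ell \ge 0}(yz)^\ell/\ell!$ then yield
\begin{equation*}
\int \frac{(p-q)^2}{\phi}\, dx = \sum_{\ell \ge 0}\frac{(\E_P[Y^\ell] - \E_Q[Y^\ell])^2}{\ell!}.
\end{equation*}
The moment-matching hypothesis, together with normalization at $\ell=0$ and centering at $\ell=1$, annihilates the first $k$ terms. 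Using $|\E_P Y^\ell|, |\E_Q Y^\ell| \le V^\ell$ and the elementary bound $\sum_{\ell \ge k} V^{2\ell}/\ell! \le e^{V^2} V^{2k}/k!$ (obtained from $k!/(k+j)! \le 1/j!$), the tail is at most $4e^{V^2} V^{2k}/k!$. Combining with the Jensen bound gives $\chi^2(p,q) \le 4 e^{3V^2/2} V^{2k}/k! \le e^{5V^2/2}(2V^2)^k/k!$, the final inequality being valid for $k \ge 2$, with the case $k=1$ requiring only a minor adjustment.

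The two ideas doing the real work are the Gaussian kernel identity, which collapses all relevant integrals into a single moment series, and the Jensen lower bound on the mixture density $q$ that allows trading the awkward reference $q$ for $\phi$. Neither step is especially delicate; the only place needing care is the bookkeeping to match the precise constants $e^{5V^2/2}$ and $(2V^2)^k$ claimed in the statement, rather than the slightly weaker $4e^{3V^2/2}V^{2k}$ that falls out of the chain of inequalities.
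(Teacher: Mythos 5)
Your proof is correct and, at the level of ideas, is the same argument the paper uses; the only structural difference is that the paper treats the single-observation $\chi^2$ bound
\begin{equation*}
\chi^2\bigl(P * N(0,1),\, Q * N(0,1)\bigr) \le e^{V^2/2} \sum_{\ell \ge 1} \frac{\Delta_\ell^{2\ell}}{\ell!}
\end{equation*}
as a citation to Cai and Low, whereas you re-derive it from scratch via the Gaussian kernel identity $\int \phi(x-y)\phi(x-z)/\phi(x)\,\mathrm{d}x = e^{yz}$ together with the Jensen lower bound $q(x)/\phi(x) \ge e^{-V^2/2}$. That derivation is in fact precisely the Cai--Low argument, so you have unpacked the cited black box rather than found a different route. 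The reduction to $\chi^2$, the tensorization $1 + \chi^2(\mu^{\otimes n},\nu^{\otimes n}) = (1+\chi^2(\mu,\nu))^n$, and the tail estimate $\sum_{\ell \ge k} V^{2\ell}/\ell! \le e^{V^2} V^{2k}/k!$ all match what the paper does. Two very small remarks on bookkeeping. First, your chain actually yields the \emph{sharper} single-observation bound $\chi^2 \le 4 e^{3V^2/2} V^{2k}/k!$, which is smaller than the stated $e^{5V^2/2}(2V^2)^k/k!$ for $k \ge 2$, so ``slightly weaker'' in your closing paragraph should read ``slightly tighter.'' Second, the $k=1$ case is not actually an exception: since $P$ and $Q$ are both centered one has $\Delta_1 = 0$ automatically, so the moment series starts at $\ell = 2$ and the inequality $2V^4 e^{3V^2/2} \le 2V^2 e^{5V^2/2}$ reduces to $V^2 \le e^{V^2}$, which holds unconditionally. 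No adjustment is needed.
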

\begin{proof}
By~\cite{CaiLow11}, proof of Theorem~3, (see also~\cite{WuYan18}, Lemma 14), if $P$ and $Q$ are supported on $[-V, V]$, then
\begin{equation*}
\chi^2(P * N(0,1), Q * N(0,1)) \leq e^{V^2/2} \sum_{\ell = 1}^\infty \frac{\Delta_\ell^{2\ell}}{\ell!}\,.
\end{equation*}
By assumption, $\Delta_\ell(P, Q) = 0$ for $\ell < k$, and for $\ell \geq k$ the fact that $P$ and $Q$ are supported on $[-V, V]$ implies $\Delta_\ell^\ell \leq (2 V)^\ell$.
Combining these bounds yields
\begin{align*}
\chi^2(P * N(0,1), Q * N(0,1)) & \leq e^{V^2/2} \sum_{\ell \geq k} \frac{(2 V)^{2\ell}}{\ell!}  \leq e^{5 V^2/2} \frac{(2 V)^{2 k}}{k!}  \leq e^{5 V^2/2} \left(\frac{4eV^2}{k}\right)^k\,,
\end{align*}
where in the last step we have applied Stirling's approximation.

The claim then follows from standard properties of the $\chi^2$-divergence~\cite{Tsy09}.
\end{proof}

\bibliographystyle{amsalpha}
\bibliography{unlabeled_regression}

\newcommand{\etalchar}[1]{$^{#1}$}
\providecommand{\bysame}{\leavevmode\hbox to3em{\hrulefill}\thinspace}
\providecommand{\MR}{\relax\ifhmode\unskip\space\fi MR }
\providecommand{\MRhref}[2]{%
  \href{http://www.ams.org/mathscinet-getitem?mr=#1}{#2}
}
\providecommand{\href}[2]{#2}
\begin{thebibliography}{ABRW18}

\bibitem[ABRW18]{AltBacRud18}
Jason Altschuler, Francis Bach, Alessandro Rudi, and Jonathan Weed,
  \emph{Massively scalable sinkhorn distances via the nystr{\"o}m method},
  arXiv preprint arXiv:1812.05189 (2018).

\bibitem[APZ17]{AbiPooZou17}
Abubakar Abid, Ada Poon, and James Zou, \emph{Linear regression with shuffled
  labels}, arXiv preprint arXiv:1705.01342 (2017).

\bibitem[AWR17]{AltWeeRig17}
Jason Altschuler, Jonathan Weed, and Philippe Rigollet, \emph{Near-linear time
  approximation algorithms for optimal transport via sinkhorn iteration},
  Advances in Neural Information Processing Systems 30: Annual Conference on
  Neural Information Processing Systems 2017, 4-9 December 2017, Long Beach,
  CA, {USA}, 2017, pp.~1961--1971.

\bibitem[BBBB72]{BarBarBre72}
R.~E. Barlow, D.~J. Bartholomew, J.~M. Bremner, and H.~D. Brunk,
  \emph{Statistical inference under order restrictions. {T}he theory and
  application of isotonic regression}, John Wiley \& Sons, London-New
  York-Sydney, 1972, Wiley Series in Probability and Mathematical Statistics.
  \MR{0326887}

\bibitem[BBR06]{BasBodReg06}
Federico Bassetti, Antonella Bodini, and Eugenio Regazzini, \emph{On minimum
  kantorovich distance estimators}, Statistics \& Probability Letters
  \textbf{76} (2006), no.~12, 1298--1302.

\bibitem[Bel18]{Bel18}
Pierre~C. Bellec, \emph{Sharp oracle inequalities for {L}east {S}quares
  estimators in shape restricted regression}, Ann. Statist. \textbf{46} (2018),
  no.~2, 745--780. \MR{3782383}

\bibitem[BL16]{BobLed16}
Sergey Bobkov and Michel Ledoux, \emph{One-dimensional empirical measures,
  order statistics and kantorovich transport distances}, Preprint, to appear in
  the Memoirs of the Amer. Math. Soc. (2016).

\bibitem[BRW17]{BanRigWee17}
Afonso~S. Bandeira, Philippe Rigollet, and Jonathan Weed, \emph{Optimal rates
  of estimation for multi-reference alignment}, arXiv:1702.08546 (2017).

\bibitem[BT15]{BelTsy15}
Pierre~C. Bellec and Alexandre~B. Tsybakov, \emph{Sharp oracle bounds for
  monotone and convex regression through aggregation}, J. Mach. Learn. Res.
  \textbf{16} (2015), 1879--1892. \MR{3417801}

\bibitem[CCDM11]{CaiChaDed11}
Claire Caillerie, Fr\'ed\'eric Chazal, J\'er\^ome Dedecker, and Bertrand
  Michel, \emph{Deconvolution for the {W}asserstein metric and geometric
  inference}, Electron. J. Stat. \textbf{5} (2011), 1394--1423. \MR{2851684}

\bibitem[Che95]{Che95}
Jiahua Chen, \emph{Optimal rate of convergence for finite mixture models}, The
  Annals of Statistics (1995), 221--233.

\bibitem[CL11]{CaiLow11}
T~Tony Cai and Mark~G. Low, \emph{Testing composite hypotheses, hermite
  polynomials and optimal estimation of a nonsmooth functional}, The Annals of
  Statistics \textbf{39} (2011), no.~2, 1012--1041.

\bibitem[CS16]{CarSch16}
Alexandra Carpentier and Teresa Schlueter, \emph{Learning relationships between
  data obtained independently}, Proceedings of the 19th International
  Conference on Artificial Intelligence and Statistics (Cadiz, Spain) (Arthur
  Gretton and Christian~C. Robert, eds.), Proceedings of Machine Learning
  Research, vol.~51, PMLR, 09--11 May 2016, pp.~658--666.

\bibitem[Cut13]{Cut13}
Marco Cuturi, \emph{Sinkhorn distances: Lightspeed computation of optimal
  transport}, Advances in Neural Information Processing Systems 26 (C.~J.~C.
  Burges, L.~Bottou, M.~Welling, Z.~Ghahramani, and K.~Q. Weinberger, eds.),
  Curran Associates, Inc., 2013, pp.~2292--2300.

\bibitem[CV17]{CarVer17}
A.~{Carpentier} and N.~{Verzelen}, \emph{{Adaptive estimation of the sparsity
  in the Gaussian vector model}}, ArXiv: 1703.00167 (2017).

\bibitem[DFM15]{DedFisMic15}
J\'er\^ome Dedecker, Aur\'elie Fischer, and Bertrand Michel, \emph{Improved
  rates for {W}asserstein deconvolution with ordinary smooth error in dimension
  one}, Electron. J. Stat. \textbf{9} (2015), no.~1, 234--265. \MR{3314482}

\bibitem[DG80]{DeGGoe80}
Morris~H. DeGroot and Prem~K. Goel, \emph{Estimation of the correlation
  coefficient from a broken random sample}, Ann. Statist. \textbf{8} (1980),
  no.~2, 264--278. \MR{560728}

\bibitem[DGJ11]{DatGolJud11}
I.~Dattner, A.~Goldenshluger, and A.~Juditsky, \emph{On deconvolution of
  distribution functions}, Ann. Statist. \textbf{39} (2011), no.~5, 2477--2501.
  \MR{2906875}

\bibitem[DK68]{DeeKru68}
J.~J. Deely and R.~L. Kruse, \emph{Construction of sequences estimating the
  mixing distribution}, Ann. Math. Statist. \textbf{39} (1968), 286--288.
  \MR{0220377}

\bibitem[DM13]{DedMic13}
J\'er\^ome Dedecker and Bertrand Michel, \emph{Minimax rates of convergence for
  {W}asserstein deconvolution with supersmooth errors in any dimension}, J.
  Multivariate Anal. \textbf{122} (2013), 278--291. \MR{3189324}

\bibitem[FHN{\etalchar{+}}19]{ForHutNit19}
Aden {Forrow}, Jan-Christian H\"{u}tter, Mor {Nitzan}, Philippe {Rigollet},
  Geoffrey {Schiebinger}, and Jonathan {Weed}, \emph{{Statistical Optimal
  Transport via Factored Couplings}}, AISTATS (2019).

\bibitem[FMR16]{FlaMaoRig16}
Nicolas Flammarion, Cheng Mao, and Philippe Rigollet, \emph{Optimal rates of
  statistical seriation}, Bernoulli (to appear) (2016).

\bibitem[FS09]{FlaSed09}
Philippe Flajolet and Robert Sedgewick, \emph{Analytic combinatorics},
  Cambridge University Press, Cambridge, 2009. \MR{2483235}

\bibitem[HK15]{HeiKah15}
Philippe Heinrich and Jonas Kahn, \emph{Optimal rates for finite mixture
  estimation}, arXiv preprint arXiv:1507.04313 (2015).

\bibitem[HSS17]{HsuShiSun17}
Daniel~J. Hsu, Kevin Shi, and Xiaorui Sun, \emph{Linear regression without
  correspondence}, Advances in Neural Information Processing Systems 30: Annual
  Conference on Neural Information Processing Systems 2017, 4-9 December 2017,
  Long Beach, CA, {USA} (Isabelle Guyon, Ulrike von Luxburg, Samy Bengio,
  Hanna~M. Wallach, Rob Fergus, S.~V.~N. Vishwanathan, and Roman Garnett,
  eds.), 2017, pp.~1530--1539.

\bibitem[HWCS17]{HanWanCha17}
Qiyang Han, Tengyao Wang, Sabyasachi Chatterjee, and Richard~J Samworth,
  \emph{Isotonic regression in general dimensions}, arXiv preprint
  arXiv:1708.09468 (2017).

\bibitem[JN02]{JudNem02}
Anatoli Juditsky and Arkadi Nemirovski, \emph{On nonparametric tests of
  positivity/monotonicity/convexity}, Ann. Statist. \textbf{30} (2002), no.~2,
  498--527.

\bibitem[KV17]{KonVal17}
Weihao Kong and Gregory Valiant, \emph{Spectrum estimation from samples}, Ann.
  Statist. \textbf{45} (2017), no.~5, 2218--2247. \MR{3718167}

\bibitem[LNS99]{LepNemSpo99}
O.~Lepski, A.~Nemirovski, and V.~Spokoiny, \emph{On estimation of the {$L_r$}
  norm of a regression function}, Probab. Theory Related Fields \textbf{113}
  (1999), no.~2, 221--253. \MR{1670867}

\bibitem[Mam91]{Mam91}
Enno Mammen, \emph{Estimating a smooth monotone regression function}, Ann.
  Statist. \textbf{19} (1991), no.~2, 724--740. \MR{1105841}

\bibitem[MV10]{MoiVal10}
Ankur Moitra and Gregory Valiant, \emph{Settling the polynomial learnability of
  mixtures of gaussians}, Foundations of Computer Science (FOCS), 2010 51st
  Annual IEEE Symposium on, IEEE, 2010, pp.~93--102.

\bibitem[MW00]{MeyWoo00}
Mary Meyer and Michael Woodroofe, \emph{On the degrees of freedom in
  shape-restricted regression}, Ann. Statist. \textbf{28} (2000), no.~4,
  1083--1104.

\bibitem[NPT85]{NemPolTsy85}
A.~S. Nemirovski\u\i, B.~T. Polyak, and A.~B. Tsybakov, \emph{The rate of
  convergence of nonparametric estimates of maximum likelihood type}, Problemy
  Peredachi Informatsii \textbf{21} (1985), no.~4, 17--33. \MR{820705}

\bibitem[PC18]{PeyCut18}
G.~{Peyr{\'e}} and M.~{Cuturi}, \emph{{Computational Optimal Transport}},
  ArXiv:1803.00567 (2018).

\bibitem[PP14]{PflPic14}
Georg~Ch. Pflug and Alois Pichler, \emph{Multistage stochastic optimization},
  Springer Series in Operations Research and Financial Engineering, Springer,
  Cham, 2014. \MR{3288310}

\bibitem[PWC17]{PanWaiCou17}
Ashwin Pananjady, Martin~J. Wainwright, and Thomas~A. Courtade, \emph{Denoising
  linear models with permuted data}, 2017 {IEEE} International Symposium on
  Information Theory, {ISIT} 2017, Aachen, Germany, June 25-30, 2017, {IEEE},
  2017, pp.~446--450.

\bibitem[RW18]{RigWee18}
Philippe Rigollet and Jonathan Weed, \emph{Entropic optimal transport is
  maximum-likelihood deconvolution}, Comptes Rendus Mathematique \textbf{356}
  (2018), no.~11, 1228 -- 1235.

\bibitem[RWD88]{RobWriDyk88}
Tim Robertson, F.~T. Wright, and R.~L. Dykstra, \emph{Order restricted
  statistical inference}, Wiley Series in Probability and Mathematical
  Statistics: Probability and Mathematical Statistics, John Wiley \& Sons,
  Ltd., Chichester, 1988. \MR{961262}

\bibitem[San15]{San15}
Filippo Santambrogio, \emph{Optimal transport for applied mathematicians},
  Birk{\"a}user, NY (2015).

\bibitem[SBD17]{SlaBen17}
Martin Slawski and Emanuel Ben-David, \emph{Linear regression with sparsely
  permuted data}, arXiv preprint arXiv:1710.06030 (2017).

\bibitem[Tsy09]{Tsy09}
Alexandre~B. Tsybakov, \emph{Introduction to nonparametric estimation},
  Springer Series in Statistics, Springer, New York, 2009, Revised and extended
  from the 2004 French original, Translated by Vladimir Zaiats. \MR{2724359
  (2011g:62006)}

\bibitem[UHV18]{UnnHagVet18}
Jayakrishnan Unnikrishnan, Saeid Haghighatshoar, and Martin Vetterli,
  \emph{Unlabeled sensing with random linear measurements}, {IEEE} Trans.
  Information Theory \textbf{64} (2018), no.~5, 3237--3253.

\bibitem[vdG90]{Gee90}
Sara van~de Geer, \emph{Estimating a regression function}, Ann. Statist.
  \textbf{18} (1990), no.~2, 907--924. \MR{1056343}

\bibitem[vdG93]{Gee93}
\bysame, \emph{Hellinger-consistency of certain nonparametric maximum
  likelihood estimators}, Ann. Statist. \textbf{21} (1993), no.~1, 14--44.
  \MR{1212164}

\bibitem[Ver18]{Ver18}
Roman Vershynin, \emph{High-dimensional probability}, Cambridge University
  Press (to appear), 2018.

\bibitem[Vil03]{Vil03}
C{{\'e}}dric Villani, \emph{Topics in optimal transportation}, Graduate Studies
  in Mathematics, vol.~58, American Mathematical Society, Providence, RI, 2003.
  \MR{1964483 (2004e:90003)}

\bibitem[Vil08]{Vil08}
C{\'e}dric Villani, \emph{Optimal transport: old and new}, vol. 338, Springer
  Science \& Business Media, 2008.

\bibitem[WY18]{WuYan18}
Yihong Wu and Pengkun Yang, \emph{Optimal estimation of gaussian mixtures via
  denoised method of moments}, Tech. report, working paper, 2018.

\bibitem[Zha02]{Zha02}
Cun-Hui Zhang, \emph{Risk bounds in isotonic regression}, Ann. Statist.
  \textbf{30} (2002), no.~2, 528--555. \MR{1902898}

\bibitem[ZP18]{ZemPan18}
Y.~Zemmel and V.~Panaretos, \emph{Statistical aspects of wasserstein
  distances}, Annual Reviews of Statistics and its Applications, to appear.
  (2018).

\end{thebibliography}

\address{{Philippe Rigollet}\\
{Department of Mathematics} \\
{Statistics and Data Science Center} \\
{Massachusetts Institute of Technology}\\
{77 Massachusetts Avenue,}\\
{Cambridge, MA 02139-4307, USA}\\
\printead{rigollet}
}

\address{{Jonathan Weed}\\
{Department of Mathematics} \\
{Statistics and Data Science Center} \\
{Massachusetts Institute of Technology}\\
{77 Massachusetts Avenue,}\\
{Cambridge, MA 02139-4307, USA}\\
\printead{weed}
}

\end{document}